\documentclass[11pt,reqno,twoside]{amsart}

\usepackage[english]{babel} 
\usepackage{esint}
\usepackage[T1]{fontenc} 
\usepackage{amsmath}
\usepackage{amssymb} 
 \usepackage{enumitem} 
\usepackage{float}
\usepackage{amsfonts}
\usepackage{mathtools}
\usepackage{xcolor}
\usepackage[utf8]{inputenc}
\usepackage[mathcal]{eucal} 
\usepackage{amsthm} 
\usepackage{hyperref}
\usepackage[totalwidth=14cm,totalheight=20cm, hmarginratio=1:1]{geometry}
\usepackage[parfill]{parskip}

\DeclareMathOperator{\Hom}{Hom}

\newcommand{\R}{\mathbb{R}}

\newcommand{\SO}{\mathrm{SO}}
\newcommand{\h}{\mathbb{H}}
\newcommand{\PSL}{\mathrm{PSL}}
\newcommand{\dPSL}{\mathrm{PSL}(2,\R)\times \mathrm{PSL}(2,\R)}
\newcommand{\Sp}{\mathrm{Sp}}

\newcommand{\Pp}{\mathbb{P}}
\newcommand{\Ind}{\mathrm{Ind}}
\newcommand{\SL}{\mathrm{SL}}

\newcommand{\Max}{\mathrm{Max}}
\newcommand{\ad}{\mathrm{ad}}
\newcommand{\Hit}{\mathrm{Hit}}
\newcommand{\Core}{\mathrm{Core}}

\renewcommand{\Im}{\operatorname{Im}}
\renewcommand{\Re}{\operatorname{Re}}

\theoremstyle{definition}
\newtheorem{deff}{Definition}[section]

\newtheorem{thm}[deff]{Theorem}
\newtheorem{conj}[deff]{Conjecture}
\newtheorem{rmk}[deff]{Remark}
\newtheorem{cor}[deff]{Corollary} 
\newtheorem{lemma}[deff]{Lemma}
\newtheorem{claim}[deff]{Claim}
\newtheorem{prop}[deff]{Proposition}

\newtheorem{bigthm}{Theorem}

\DeclareMathAlphabet{\mathpzc}{OT1}{pzc}{m}{it}


\begin{document}

\setlength{\parskip}{.0em}

\title[A closed ball compactification via cores of trees]
{A closed ball compactification\\of a maximal component via cores of trees}
\author{Giuseppe Martone, Charles Ouyang, and Andrea Tamburelli}

\begin{abstract} We show that, in the character variety of surface group representations into the Lie group $\PSL(2,\R) \times \PSL(2,\R)$, the compactification of the maximal component introduced by the second author is a closed ball upon which the mapping class group acts. We study the dynamics of this action. Finally, we describe the boundary points geometrically as $(\overline{A_{1} \times A_{1}},2)$-valued mixed structures.
\end{abstract}

\maketitle
\setcounter{tocdepth}{1}
\tableofcontents

\setlength{\parskip}{.5em}
\section{Introduction}

A recurring theme in higher Teichm{\"u}ller theory is to relate surface group representations into higher rank Lie groups with geometric objects. Taking its cue from classical Teichm{\"u}ller theory, one is often interested in studying the degeneration of these associated geometric objects when the representation leaves all compact sets in the character variety. The celebrated Thurston compactification of Teichm{\"u}ller space regards Fuchsian representations as marked hyperbolic metrics,  where degenerating families of hyperbolic metrics subconverge to projectivized measured laminations.
One key aspect of this compactification is that it is a closed ball upon which the mapping class group acts. In years following, there have been numerous different perspectives of the Thurston compactification, using a variety of methods, topological, geometric, analytic and algebraic (see \cite{Bonahon_currents, bestvina1988degenerations, paulin1988degenerations, Wolf_harmonic, morgan1984valuations, Brumfiel_realspec}).

When the Lie group $\PSL(2,\mathbb{R})$ is replaced with a higher rank one, the relevant geometric object is not always immediately clear. In rank 2 however, combined work of Schoen \cite{Schoenharmonic}, Labourie \cite{Labourie_cyclic}, Loftin \cite{Loftin_thesis}, Collier \cite{collier2016maximal}, Alessandrini-Collier \cite{AC_SP4}, and Collier-Tholozan-Toulisse \cite{collier2019geometry}, provides a geometric interpretation to representations in the various distinguished components of the relevant character variety. These components are usually maximal components or Hitchin components, which maximize a topological quantity, the Toledo invariant, or contain a deformation of the classical Teichm{\"u}ller space. Parreau \cite{parreau_compact} compactifies them by attaching at infinity surface group actions on a Euclidean building.

This paper will primarily be concerned with the rank 2 semi-simple split Lie group $G=\PSL(2,\mathbb{R}) \times \PSL(2,\mathbb{R})$. The product structure of $G$ makes our study more amenable towards techniques from classical Teichm{\"u}ller theory. For $S$ a closed, orientable, smooth surface of genus $g>1$, work of Goldman \cite{Goldman_topcomp} shows the connected components of the character variety $\chi(\pi_{1}(S), \PSL(2, \mathbb{R}))$ are determined by the Euler number. In particular, the distinguished component with maximal Euler number of $2g-2$ is the Teichm{\"u}ller space Teich$(S)$. If we denote the character variety for $G= \dPSL$ by $\chi(\pi_{1}(S), G)$, then the connected components are merely products of the connected components of $\chi(\pi_{1}(S), \PSL(2, \mathbb{R}))$. The maximal component Max$(S,G)$ of $\chi(\pi_{1}(S), G)$ is the collection of conjugacy classes of pairs of representations, each of which is a Fuchsian representation. Hence $\mathrm{Max}(S):=\mathrm{Max}(S, \dPSL)$ is the product of two copies of Teichm{\"u}ller space.

Elements in the component Max$(S)$ have a number of related geometric interpretations. Schoen \cite{Schoenharmonic} has shown these representations correspond to equivariant minimal Lagrangians in $\mathbb{H}^{2} \times \mathbb{H}^{2}$. At the same time, the group $G=\dPSL$ is the isometry group of AdS$^{3}$, and Mess \cite{Mess} has shown the holonomy representations of GHMC-AdS$^{3}$ manifolds are precisely the ones in Max$(S)$. Krasnov-Schlenker \cite{KS_harmonicmaps} have shown to each GHMC-AdS$^{3}$ manifold, there is a unique equivariant space-like maximal surface, whose image under the Gauss map is the aforementioned minimal Lagrangian.

In seeking a compactification of Max$(S)$ via degeneration of geometric objects, the second author in his thesis \cite{Charles_dPSL} showed the natural limits to the minimal Lagrangians were given by cores of $\mathbb{R}$-trees dual to measured laminations. These are topologically and group-theoretically defined distinguished subcomplexes of the product of two trees, where some parts are two-dimensional and the remaining parts are one-dimensional. Denote by Core$(\mathcal{T}, \mathcal{T})$, the space of cores in the product of trees dual to measured laminations. Observe that there is a natural $\mathbb R^+$-action on Core$(\mathcal{T}, \mathcal{T})$ and denote by $\mathbb P\mathrm{Core}(\mathcal{T}, \mathcal{T})$ the resulting projectivization. We equip Max$(S)$ and $\mathbb P\mathrm{Core}(\mathcal{T}, \mathcal{T})$ with the equivariant Gromov-Hausdorff topology. One natural question one might ask is what exactly is the topology of the resulting compactification. Our first main result is the following.

\begin{bigthm}\label{thm:closedballIntro}
The disjoint union
\[
\mathfrak B=\mathrm{Max}(S)\sqcup \mathbb{P}\mathrm{Core}(\mathcal T,\mathcal T)
\]
is homeomorphic to a closed ball of dimension $12g-12$.
\end{bigthm}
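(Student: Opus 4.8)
\textit{Strategy of proof.} The plan is to extend Wolf's harmonic-maps description of Teichm\"uller space to the product setting and to exhibit an explicit homeomorphism from $\mathfrak B$ onto the closed unit ball of a Euclidean vector space. Fix a Riemann surface structure $X_0$ on $S$ and let $Q$ be the complex vector space of holomorphic quadratic differentials on $X_0$, so that $\dim_{\bbR}Q=6g-6$. By the work of Sampson and Wolf \cite{Wolf_harmonic}, sending a Fuchsian representation $\rho$ to the Hopf differential of the unique $\rho$-equivariant harmonic map from $X_0$ to $\h^2$ defines a homeomorphism $\mathrm{Teich}(S)\cong Q$; hence
\[
\Psi\colon\Max(S)\longrightarrow V:=Q\oplus Q,\qquad[(\rho_1,\rho_2)]\longmapsto(q_1,q_2),
\]
with $q_i$ the Hopf differential of the harmonic map to $\h^2/\rho_i$, is a homeomorphism onto a real vector space $V$ of dimension $12g-12$. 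Fixing a norm on $V$, the radial map $v\mapsto v/(1+\|v\|)$ is a homeomorphism of $V$ onto the open unit ball in $V$ and extends to a homeomorphism of $V\sqcup\partial B$ onto the closed unit ball $B$, a ray $\bbR_{>0}\cdot v$ having limit $v/\|v\|\in\partial B$; here $\dim B=12g-12$ and $\dim\partial B=12g-13$.

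\textit{Identifying the boundary and defining the map.} To a projectivized core $\Core(T_{\lambda_1}\times T_{\lambda_2})$, where $T_{\lambda_i}$ is the $\bbR$-tree dual to $\lambda_i\in\mathcal{ML}(S)$ and a degenerate factor $\lambda_i=0$ is allowed, attach the pair $(q_1,q_2)\in V$ singled out by the Hubbard--Masur theorem through the requirement that the vertical foliation of $q_i$ be measure-equivalent to $\lambda_i$. Since rescaling $\lambda_i$ by $c$ rescales $q_i$ by $c^2$, this descends to a well-defined map $\mathbb P\Core(\cT,\cT)\to\partial B$, $\Core(T_{\lambda_1}\times T_{\lambda_2})\mapsto(q_1,q_2)/\|(q_1,q_2)\|$. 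It is a bijection: Hubbard--Masur gives $\mathcal{ML}(S)\cong Q$, and the ordered pair $(\lambda_1,\lambda_2)$, up to a common factor, can be recovered from the core by reading off the transverse measures carried by its one- and two-dimensional parts. Together with the first step this yields a bijection $\Phi\colon\mathfrak B\to B$ that is $\Psi$ followed by the radial map on $\Max(S)$ and the bijection just described on $\mathbb P\Core(\cT,\cT)$; the theorem follows once $\Phi$ is shown to be a homeomorphism.

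\textit{The uniform degeneration statement.} As $\mathfrak B$ is metrizable and $B$ is compact Hausdorff, it suffices to prove that $\Phi$ is continuous and that $\mathfrak B$ is sequentially compact, and both reduce to the following. Let $(\rho_1^n,\rho_2^n)$ leave every compact subset of $\Max(S)$, put $t_n:=\|\Psi(\rho^n)\|^{1/2}\to\infty$, and assume $\Psi(\rho^n)/t_n^2\to(q_1^\infty,q_2^\infty)\in\partial B$. Then, in the equivariant Gromov--Hausdorff topology, the minimal Lagrangian of $(\rho_1^n,\rho_2^n)$ in $\h^2\times\h^2$, with the ambient metric rescaled by $1/t_n$, converges to $\Core(T_{\lambda_1^\infty}\times T_{\lambda_2^\infty})$, where $\lambda_i^\infty$ is the vertical foliation of $q_i^\infty$, understood as a point if $q_i^\infty=0$. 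I would prove this by combining: (i) the factorwise statement that Wolf's parametrization extends to a homeomorphism of Thurston-type compactifications, so that $\tfrac1{t_n}(\h^2/\rho_i^n)$ converges to $T_{\lambda_i^\infty}$; (ii) the fact that the minimal Lagrangian is an equivariant surface in $\h^2\times\h^2$ projecting onto each factor, so its rescaled limit is a subcomplex of the product $T_{\lambda_1^\infty}\times T_{\lambda_2^\infty}$, after which uniform energy and curvature bounds --- a Cheeger--Gromov-type compactness for the conformal structures of the minimal Lagrangians and their harmonic projections --- together with the characterization of the core in \cite{Charles_dPSL} identify this subcomplex with the whole core; and (iii) a diagonal argument reducing an arbitrary divergent sequence in $\Max(S)$ to one of this form. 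Granting this, $\Phi$ is continuous --- on $\Max(S)$ it is $\Psi$ followed by the radial map, and at a boundary point it is exactly the displayed statement, using also that $(\lambda_1,\lambda_2)\mapsto\Core(T_{\lambda_1}\times T_{\lambda_2})$ is Gromov--Hausdorff continuous --- and $\mathfrak B$ is sequentially compact, hence compact; a continuous bijection from a compact space onto a Hausdorff space is a homeomorphism, so $\mathfrak B\cong B$.

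\textit{Main obstacle.} The heart of the matter is point (ii): the minimal Lagrangian is a genuinely nonlinear object whose own conformal structure may degenerate, and one must show that its Gromov--Hausdorff limit exhausts the whole core rather than a proper subcomplex, and do so uniformly in the limiting direction $(q_1^\infty,q_2^\infty)$ rather than only along a fixed ray --- precisely the uniformity that goes beyond the factorwise theory of \cite{Wolf_harmonic} and the ray-by-ray analysis underlying the construction of $\mathfrak B$ in \cite{Charles_dPSL}. A second, more bookkeeping-type, issue is the injectivity of $\mathbb P\Core(\cT,\cT)\to\partial B$, i.e.\ that a core determines the pair of measured laminations that produced it; if this is not already available it should be isolated as a preliminary lemma.
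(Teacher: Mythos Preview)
Your overall architecture---Wolf's parametrization $\mathrm{Teich}(S)^2\cong Q\oplus Q$, a radial contraction onto the open unit ball, and Hubbard--Masur on the boundary sphere---is exactly the paper's approach. The bijection you build and the one the paper calls $\psi$ are the same map.

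The one substantive difference is that what you flag as the ``main obstacle'' is, in the paper, not an obstacle at all: it is quoted as a black box. The paper opens its proof by invoking \cite[Theorem 6.13]{Charles_dPSL}, which already gives a homeomorphism
\[
\mathrm{Max}(S)\sqcup\mathbb P\mathrm{Core}(\mathcal T,\mathcal T)\ \cong\ \mathrm{Teich}(S)\times\mathrm{Teich}(S)\ \sqcup\ \mathbb P\big(\mathcal{ML}(S)\times\mathcal{ML}(S)\big)
\]
compatible with the Gromov--Hausdorff topology on the left and the Thurston-type topology on the right. Once that reduction is made, the minimal Lagrangians disappear from the argument entirely: the rest is pure factorwise Teichm\"uller theory (Wolf's convergence of rescaled harmonic maps to projections onto trees, plus Hubbard--Masur), and your point (ii) with its Cheeger--Gromov compactness for the minimal surfaces is never needed. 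So your proposal is not wrong, but it reproves the content of \cite{Charles_dPSL} instead of citing it. Your ``bookkeeping'' worry---that the core determines the ordered pair of laminations---is likewise already isolated in the paper as Proposition \ref{prop:core=productlaminations}.

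A minor technical contrast: rather than arguing via sequential compactness of $\mathfrak B$ and the compact-to-Hausdorff trick, the paper writes down $\psi^{-1}$ explicitly and checks its continuity directly (Hubbard--Masur on the boundary, Wolf on the interior, and a short contradiction argument at points of $\partial\overline{\mathrm{BPQD}(X)}$ approached from the interior). Either route works; the paper's is slightly more self-contained since it does not need to know in advance that $\mathfrak B$ is metrizable and compact.
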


It is not too difficult to see from the construction of this compactification, that the action of the mapping class group extends continuously to the boundary. Following Thurston, we study the action of the mapping class group  $\mathrm{MCG}(S)$ on our compactification $\mathfrak{B}$.

\begin{prop}\label{prop:upstairsfixedpts} Suppose $\phi\in\mathrm{MCG}(S)$ and $\phi(x)=x$ for some $x=(x_1,x_2)\in\mathfrak B$.
\begin{enumerate}
    \item If $\phi$ is periodic, then $x_1$ and $x_2$ are any two points fixed by $\phi$ in the Thurston compactification of Teichm\"uller space such that $(x_1,x_2)\in\mathfrak B$.
    \item If $\phi$ is pseudo-Anosov, then $(x_1,x_2)\in\partial \mathfrak B$ and $x_1=0$, or $x_2=0$ or $x_1=x_2$.
\end{enumerate}
\end{prop}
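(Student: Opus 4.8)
The plan is to reduce everything to the product structure of the action. On the open part $\mathrm{Max}(S)=\mathrm{Teich}(S)\times\mathrm{Teich}(S)$ the mapping class group acts diagonally, $\phi\cdot(X_1,X_2)=(\phi\cdot X_1,\phi\cdot X_2)$. On the boundary $\partial\mathfrak B=\mathbb{P}\mathrm{Core}(\mathcal T,\mathcal T)$, a point is the homothety class of the core $\mathrm{Core}(T_{\mu_1}\times T_{\mu_2})$ of the product of the $\mathbb{R}$-trees dual to a pair $(\mu_1,\mu_2)\in\mathcal{ML}(S)^2\setminus\{(0,0)\}$, where the pair is well defined up to multiplying \emph{both} transverse measures by one and the same positive constant; write $[(\mu_1,\mu_2)]$ for this class and $x_i=[\mu_i]$ for the two projective coordinates, with the convention $x_i=0$ when $\mu_i=0$. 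Since $\phi$ carries $T_\mu$ to $T_{\phi_*\mu}$ equivariantly and this is compatible with the core construction, the induced action on $\partial\mathfrak B$ is $\phi\cdot[(\mu_1,\mu_2)]=[(\phi_*\mu_1,\phi_*\mu_2)]$. Thus a fixed point $x$ of $\phi$ in $\mathfrak B$ is either an interior point with $\phi\cdot X_i=X_i$ for $i=1,2$, or a boundary point $[(\mu_1,\mu_2)]$ with $\phi_*\mu_i=c\,\mu_i$ for $i=1,2$ and a \emph{single} constant $c=c(x)>0$.

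For (1), assume $\phi^n=\mathrm{id}$. If $x$ is an interior fixed point then $X_1,X_2\in\mathrm{Teich}(S)$ are fixed by $\phi$; such points exist by Nielsen realization, so the fixed set is nonempty. If $x=[(\mu_1,\mu_2)]$ is a boundary fixed point, then $\phi_*^{\,n}\mu_i=c^{\,n}\mu_i=\mu_i$ forces $c^{\,n}=1$, hence $c=1$ and $\phi_*\mu_i=\mu_i$; equivalently each $x_i$ is a point of $\mathbb{P}\mathcal{ML}(S)\subset\overline{\mathrm{Teich}}(S)$ fixed by $\phi$ (or $x_i=0$, which is fixed vacuously). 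Conversely, if $x_1,x_2$ are each fixed by $\phi$ in $\overline{\mathrm{Teich}}(S)$ (allowing the value $0$) and $(x_1,x_2)\in\mathfrak B$, then the diagonal formulas above give $\phi\cdot x=x$. This is exactly statement (1).

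For (2), let $\phi$ be pseudo-Anosov. A mapping class fixing a point of $\mathrm{Teich}(S)$ is realized by an isometry of a hyperbolic surface, hence is periodic, so $\phi$ fixes no interior point; any fixed point $x$ therefore lies in $\partial\mathfrak B$, say $x=[(\mu_1,\mu_2)]$ with $\phi_*\mu_i=c\,\mu_i$. By Thurston's classification, $\phi$ acts on $\mathbb{P}\mathcal{ML}(S)$ with north--south dynamics, its only fixed points being the unstable and stable classes $[\mu^+]$ and $[\mu^-]$, and $\phi_*\mu^+=\lambda\mu^+$, $\phi_*\mu^-=\lambda^{-1}\mu^-$ for the dilatation $\lambda>1$. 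Hence whenever $\mu_i\neq 0$ we have $[\mu_i]\in\{[\mu^+],[\mu^-]\}$ and correspondingly $c\in\{\lambda,\lambda^{-1}\}$. If $\mu_1=0$ then $\mu_2\neq 0$ and $x_1=0$; if $\mu_2=0$ then $x_2=0$; if both are nonzero, the single admissible value of $c$ forces the two coordinates onto the same side of the dynamics ($c=\lambda$ gives $x_1=x_2=[\mu^+]$, $c=\lambda^{-1}$ gives $x_1=x_2=[\mu^-]$), so in particular $x_1=x_2$. This proves (2); note that the same computation shows $([\mu^+],[\mu^-])$ is \emph{not} fixed, which is exactly why the common-scaling constraint cannot be dropped.

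The one point genuinely requiring the results of the body of the paper is the first paragraph: that under the homeomorphism of Theorem~\ref{thm:closedballIntro} the mapping class group action on $\partial\mathfrak B=\mathbb{P}\mathrm{Core}(\mathcal T,\mathcal T)$ is precisely the diagonal action on the underlying pair of measured laminations, \emph{including} the bookkeeping of the single common homothety factor. Granting this, the remainder is a factorwise application of Nielsen realization and of Thurston's north--south dynamics on $\mathbb{P}\mathcal{ML}(S)$, together with the elementary fact that one positive constant cannot equal both $\lambda$ and $\lambda^{-1}$.
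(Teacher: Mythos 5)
Your proof is correct and follows essentially the same route as the paper: reduce to the common projective scaling factor $\alpha$ on the pair, force $\alpha=1$ by periodicity in case (1), and in case (2) use that a pseudo-Anosov fixes only its stable/unstable projective classes so that a single scaling constant cannot be both $\lambda$ and $\lambda^{-1}$. The only cosmetic difference is that the paper rules out the pair $(y_1,y_2)$ via an intersection-number computation (and leaves the absence of interior fixed points implicit), whereas you compare the eigenvalues directly and spell out the interior case via realization by isometries; these are equivalent.
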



The action of the mapping class group appears to be more interesting if we consider its action on a natural quotient of $\mathfrak{B}$. In fact, given a maximal representation $\rho$, there is a unique equivariant minimal Lagrangian $\widetilde{\Sigma}_{\rho}$ in $\h^{2}\times \h^{2}$. The induced metric on $\widetilde{\Sigma}_{\rho}$ descends to a negatively curved Riemannian metric on $S$. We denote by $\Ind(S)$ the space of such metrics. It turns out that $\Ind(S)=\Max(S)/S^{1}$. Similarly, the distance on the core of the product of two trees dual to a pair of measured laminations can be recovered from a mixed structure, that is a hybrid geometric object on $S$ that is in part a measured laminations and in part a finite area flat metric induced by a meromorphic quadratic differential on subsurfaces glued along annuli. The space of projectivized mixed structure can then be identified with the boundary of $\Ind(S)$ in the length spectrum topology (\cite{Charles_dPSL}). The mapping class group acts on $\overline{\Ind(S)}$ and we prove the following:

\begin{bigthm}\label{thm:actionMix} Assume $\phi\in\mathrm{MCG}(S)$ fixes $\mu\in\partial\overline{\mathrm{Ind}(S)}$. 
\begin{enumerate}
	\item If $\mu$ is {\em purely flat}, then $\phi$ is periodic.
	\item If $\mu$ is {\em properly mixed}, then $\phi$ is not pseudo-Anosov.
\end{enumerate}
\end{bigthm}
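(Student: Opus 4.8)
The plan is to treat the two cases by genuinely different mechanisms; the common thread is that a mapping class fixing a boundary point must respect whatever intrinsic structure that point carries.

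\textbf{Part (1).} Here I would exploit the rigidity of a purely flat point. If $\mu$ is purely flat, then $\mu=|q|$ is the (singular, finite area) flat metric of a meromorphic quadratic differential $q$ on $S$, i.e.\ a flat metric with conical singularities on the \emph{closed} surface $S$. Recalling that a point of $\partial\overline{\mathrm{Ind}(S)}$ is an isotopy-and-scaling class of mixed structures — in particular flat structures are separated by their length spectra — the hypothesis $\phi(\mu)=\mu$ means that $\phi$ is represented by a homeomorphism $f$ of $S$ with $f_{*}|q|=c\,|q|$ for some $c>0$, that is, a similarity of $(S,|q|)$. Comparing diameters on the compact surface $S$ forces $c=1$, so $f$ is an isometry; its derivative is then orthogonal away from the cone points, so $f$ is a conformal or anticonformal automorphism of the underlying compact Riemann surface, which has genus $g>1$. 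As that automorphism group is finite, $\phi$ is periodic.

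\textbf{Part (2).} Here I would argue by contradiction from the Nielsen--Thurston trichotomy together with the structure of a properly mixed $\mu$. A properly mixed structure comes with a canonical decomposition of $S$ into flat subsurfaces of negative Euler characteristic and complementary laminar/annular pieces, glued along a \emph{nonempty} multicurve $\Gamma$ of essential simple closed curves; crucially $\Gamma$ is intrinsic to $\mu$ (it is, e.g., the boundary of the maximal flat part, equivalently the one-dimensional ``collapsed'' locus of the dual core). If $\phi(\mu)=\mu$, then $\phi_{*}\mu=c\mu$ for some $c>0$, so a representative of $\phi$ carries flat pieces of $\mu$ to flat pieces and laminar/annular pieces to laminar/annular pieces; hence $\phi(\Gamma)=\Gamma$ up to isotopy. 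Some power $\phi^{k}$ then fixes the isotopy class of every component of $\Gamma$, so $\phi^{k}$ is reducible, and therefore $\phi$ is not pseudo-Anosov. One can also see this via Proposition~\ref{prop:upstairsfixedpts}: each alternative $x_{1}=0$, $x_{2}=0$, $x_{1}=x_{2}$ gives a one-dimensional core, hence a purely laminar (not properly mixed) structure downstairs — but turning this into a proof requires controlling the $S^{1}$-fibre of $\mathfrak B\to\overline{\mathrm{Ind}(S)}$ over $[\mu]$, which need not be a single point, so I would present the multicurve argument instead.

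\textbf{Main obstacle.} The routine ingredients are that similarities of a closed flat surface are isometries, that genus $>1$ Riemann surfaces have finite automorphism group, and that a pseudo-Anosov class admits no invariant multicurve. The step that requires real care is the structure theory behind Part (2): that a properly mixed structure determines a nonempty essential reducing multicurve preserved by every mapping class fixing the structure. In Part (1) the only subtlety worth checking is that ``purely flat'' genuinely describes a flat metric on the closed surface $S$, with conical (not higher-order) singularities, so that the similarity/automorphism argument applies verbatim.
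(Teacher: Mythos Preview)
Your proof is correct. For Part~(2) you follow essentially the paper's route: the paper proves (as its Claim~\ref{important claim}) that $\phi$ preserves the canonical decomposing multicurve $\{d_\beta\}$, using the uniqueness of that system, and concludes $\phi$ is not pseudo-Anosov. Your presentation is if anything more direct, since once $\phi(\Gamma)=\Gamma$ the conclusion is immediate; the paper goes on to pass to a power and invoke the trichotomy again, which is not strictly needed for item~(2) itself.

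For Part~(1) you take a genuinely different path. The paper argues by intersection numbers: if $\phi$ were pseudo-Anosov with stable foliation $\mathcal F$ and stretch $\lambda>1$, then writing $\phi(\mu)=\alpha\mu$ with $\alpha\geq 1$ and using that a purely flat $\mu$ fills (so $i(\mu,\mathcal F)\neq 0$) gives
\[
i(\mu,\mathcal F)=\frac{1}{\alpha}\,i(\phi(\mu),\mathcal F)=\frac{1}{\alpha}\,i(\mu,\phi^{-1}\mathcal F)=\frac{1}{\alpha\lambda}\,i(\mu,\mathcal F),
\]
a contradiction; the reducible case is then excluded by restricting to a subsurface on which a power of $\phi$ is pseudo-Anosov and repeating. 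Your argument instead uses compact-surface rigidity: a similarity of the closed flat surface $(S,|q|)$ must be an isometry (area or diameter comparison), hence a biholomorphism of the underlying genus $g\geq 2$ Riemann surface, hence of finite order. This is cleaner and yields periodicity in one step, without a separate reducible sub-case. The paper's intersection-number mechanism, on the other hand, is what transplants to subsurfaces and is reused verbatim in the analysis of the reducible case in Theorem~\ref{thm: reducible properly mixed}; your isometry argument does not immediately localise in the same way.
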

Note that the remaining case of $\mu$ a {\em purely laminar} mixed structure is handled by Nielsen-Thurston's classification theorem. Theorem \ref{thm: reducible properly mixed} will give a more detailed description of item (2) in Theorem \ref{thm:actionMix} when $\phi$ is reducible. In particular, we will show that the subdivision of $S$ induced by $\mu$ is a refinement of the one induced by $\phi$ if $\mu$ has no trivial parts.

The absence of a product structure for the other simple split Lie groups of rank 2 makes the study of the topology of any compactification considerably more difficult. Furthermore, for $\dPSL$, quadratic differentials are intimately related to pairs of measured laminations, and for higher order differentials, which appear for the other rank 2 cases, there are no obvious analogous topological objects. However, it is possible to describe our compactification without explicit references to $\mathbb{R}$-trees, and we conjecture this perspective can be extended to the other rank 2 Lie groups. In particular, given any Lie algebra $\mathfrak{g}$ with Cartan subalgebra $\mathfrak{a}$ and positive Weyl chamber $\mathfrak{a}^{+}$, we define $\overline{\mathfrak{a}^{+}}$-valued measured laminations and $(\overline{\mathfrak{a}^{+}}, k)$-mixed structures obtained by gluing these vector-valued laminations together with $1/k$-translation surfaces of finite area along annuli. We can rephrase our main result as follows:

\begin{bigthm}\label{thm:Weylmixed} The boundary of $\Max(S)$ can be identified with the space of $(\overline{A_{1}^{+}\times A_{1}^{+}},2)$-mixed structures on $S$, which is thus topologically a sphere of dimension $12g-13$. 
\end{bigthm}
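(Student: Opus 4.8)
The plan is to deduce Theorem~\ref{thm:Weylmixed} from Theorem~\ref{thm:closedballIntro} together with the geometric description of $\mathbb{P}\mathrm{Core}(\mathcal T,\mathcal T)$ in \cite{Charles_dPSL}, the only genuinely new ingredient being a translation of vocabulary. First I would record that $\partial\mathrm{Max}(S)=\mathbb{P}\mathrm{Core}(\mathcal T,\mathcal T)$ and that this space is a topological sphere of dimension $12g-13$: the component $\mathrm{Max}(S)=\mathrm{Teich}(S)\times\mathrm{Teich}(S)$ is an open dense subset of $\mathfrak B$ and is a manifold without boundary of dimension $12g-12$, so, since by Theorem~\ref{thm:closedballIntro} $\mathfrak B$ is homeomorphic to a closed ball of that dimension and invariance of domain places a full-dimensional submanifold without boundary of a closed ball in its interior, $\mathrm{Max}(S)$ is exactly the interior of the ball and $\mathbb{P}\mathrm{Core}(\mathcal T,\mathcal T)=\mathfrak B\setminus\mathrm{Max}(S)$ is exactly its boundary. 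In particular the dimension and topology assertions of the theorem are automatic once the identification with mixed structures is established.

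Next I would set up the dictionary between $\mathbb{P}\mathrm{Core}(\mathcal T,\mathcal T)$ and the space of $(\overline{A_1^+\times A_1^+},2)$-mixed structures defined above. A point of $\mathbb{P}\mathrm{Core}(\mathcal T,\mathcal T)$ is the projective class of a core $\mathrm{Core}(T_{\lambda_1},T_{\lambda_2})$ of the product of the two $\mathbb{R}$-trees dual to a pair of measured laminations $(\lambda_1,\lambda_2)$; by \cite{Charles_dPSL} this core decomposes $S$ into two-dimensional pieces, supported on the subsurfaces jointly filled by $\lambda_1$ and $\lambda_2$, on which the induced metric is the finite-area flat metric of a meromorphic quadratic differential---that is, a $1/2$-translation surface---together with one-dimensional pieces, supported on the subsurfaces carrying only one of the two laminations and recorded with transverse weight in the corresponding wall of the closed Weyl chamber $\overline{A_1^+\times A_1^+}$, all glued along flat annuli. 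This is precisely the data of an $(\overline{A_1^+\times A_1^+},2)$-mixed structure, with the $\mathbb{R}^+$-action on cores matching the scaling used to projectivize such structures. Conversely, from an $(\overline{A_1^+\times A_1^+},2)$-mixed structure one reconstructs $\lambda_1$ and $\lambda_2$ as the horizontal and vertical measured foliations of the quadratic differentials on the flat pieces together with the wall-supported laminations on the laminar pieces, hence the core, which yields an inverse map. Finally I would check that this bijection is a homeomorphism for the length-spectrum topology---equivalently, the equivariant Gromov--Hausdorff topology---by observing that both sides carry the same marked length function on the set $\mathcal C$ of free homotopy classes of essential curves, namely the one computed in \cite{Charles_dPSL}, so that the two spaces coincide as subsets of $\mathbb{P}\mathbb{R}^{\mathcal C}$ under the marked-length-spectrum embedding.

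The hard part is this last identification, and within it the continuity of the bijection and of its inverse at the loci where the combinatorial type of the subsurface decomposition jumps---a two-dimensional flat piece can collapse onto a laminar one, or a gluing annulus can pinch, while the length spectrum varies continuously. This is the same degeneration behavior that Thurston had to control for $\mathbb{P}\mathcal{ML}$ and that is treated for flat and mixed structures in \cite{Charles_dPSL}, so much of it can be inherited and the rest is bookkeeping across strata. By contrast, as noted above, once the identification is in hand the statement that $\partial\mathrm{Max}(S)$ is a sphere of dimension $12g-13$ is an immediate consequence of Theorem~\ref{thm:closedballIntro}.
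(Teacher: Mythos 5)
Your first two steps are essentially the paper's route: the boundary of $\mathrm{Max}(S)$ in $\mathfrak B$ is $\mathbb{P}\mathrm{Core}(\mathcal T,\mathcal T)\cong\mathbb{P}(\mathcal{ML}(S)\times\mathcal{ML}(S))$, a sphere of dimension $12g-13$, and the dictionary between cores and $(\overline{A_1^+\times A_1^+},2)$-mixed structures is built from the decomposition of Lemma \ref{lm:decomposition_pair_laminations}, Gardiner--Masur on the filling pieces, and the identification of $\overline{A_1^+\times A_1^+}$-valued laminations with pairs of nowhere transverse laminations (Remark \ref{rmk:examples}, Lemma \ref{lm:core_as_mixed}). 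Two problems remain. A secondary one: you describe the one-dimensional pieces as ``supported on the subsurfaces carrying only one of the two laminations,'' with weights in a wall of the chamber. That is not the decomposition used here: the laminar pieces are the subsurfaces where $\lambda_1$ and $\lambda_2$ are nowhere transverse, on which both may be nonzero, so the transverse measure genuinely takes values in the full closed chamber, not only in its walls; if it did not, the reconstruction map would miss most pairs in $\mathcal{ML}(S)\times\mathcal{ML}(S)$.

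The substantive gap is your final step. The paper does not prove that the bijection is a homeomorphism for a pre-existing length-spectrum topology; it \emph{defines} the topology on the space of $(\overline{A_1^+\times A_1^+},2)$-mixed structures by transporting the topology of $\mathcal{ML}(S)\times\mathcal{ML}(S)$ (equivalently of $\mathrm{Core}(\mathcal T,\mathcal T)$, Proposition \ref{prop:core=productlaminations}) through the bijection $\varphi$, so there is no stratumwise continuity to check. Your proposed substitute --- that both sides embed in $\mathbb{P}\mathbb{R}^{\mathcal C}$ via the marked length spectrum and therefore coincide --- fails, because the scalar marked length spectrum of the core's path metric is not injective on $\mathbb{P}\mathrm{Core}(\mathcal T,\mathcal T)$: swapping $\lambda_1$ and $\lambda_2$, or more generally rotating the quadratic differential on a filling piece, changes the pair of laminations (hence the core and the vector-valued mixed structure) without changing the induced flat metric or the associated length function. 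The scalar spectrum only records the scalar (DLR-type) mixed structure, i.e.\ the image in $\partial\overline{\mathrm{Ind}(S)}$, which is a genuine quotient of $\partial\mathfrak B$; this is precisely why the paper introduces Weyl-chamber-valued data. To repair your argument you would need the $\overline{A_1^+\times A_1^+}$-valued length/intersection data, i.e.\ the two length spectra of $\lambda_1$ and $\lambda_2$ separately --- at which point you are back to the identification with $\mathcal{ML}(S)\times\mathcal{ML}(S)$ that the paper uses directly.
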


Moreover, we prove in Lemma \ref{lm:core_as_mixed} that $(\overline{A_{1}^{+}\times A_{1}^{+}},2)$-mixed structures are dual to the subcomplexes of an Euclidean building introduced and studied in \cite{parreau_invariant}.
Theorem \ref{thm:Weylmixed} has the advantage to be easily adaptable to other higher Teichm\"uller components (see Conjecture \ref{conj} for the precise statements in rank $2$).

\subsection*{Historical remarks} In analogy with the classical case, compactifications of higher Teichm\"uller spaces are fruitfully studied using different techniques and perspectives. As mentioned above, Parreau \cite{parreau_compact} compactifies the character variety of surface group representations into noncompact semisimple connected
real Lie groups with finite center using Euclidean buildings. For Hitchin and maximal connected components, one can obtain additional information on the boundary points by using the ($\Theta$-)positivity properties of the representations as in \cite{alessandrini_agt, burger-pozzetti,FG_main,Le2016, martone2018,martone2019sequences, parreau_invariant}. For rank two Lie groups, the second and third authors used analytic methods to study degenerations of geometric objects associated to these representations in \cite{Charles_dPSL,OT, OT_Sp4}. In a series of papers Burger, Iozzi, Parreau and Pozzetti \cite{BIPP,BIPPmetric,BIPP_positive} use geodesic currents and real algebraic geometric methods to study compactifications of general character varieties. We refer to \cite{BIPP_compte} for an account of their work and point of view. Furthermore, in independent work \cite{BIPP_personal} they describe a compactification of the product of $n$ copies of Teichm\"uller space via the projectivization of the product of $n$ copies of the space of measured laminations and vector valued mixed structures. This will lead to a (a priori different) compactification using projectivized cores when $n=2$.


\subsection*{Acknowledgements} Part of this work was carried out when the first and second authors were visiting Rice University during Summer 2021. We thank the department for their hospitality. We thank Francis Bonahon for helpful comments on an earlier version of this manuscript. We thank Beatrice Pozzetti for helpful comments on this manuscript and for pointing out a mistake in a previous version of the
statement of Theorem B and Lemma 6.7. The third author acknowledges support from the National Science Foundation under grant NSF-DMS:2005501.

\section{Background}

\subsection{Foliations, laminations and $\mathbb{R}$-trees}
We recall some classical facts about measured foliations and laminations. This material can be found in \cite{fathi2021thurston}. Let $S$ be a closed, orientable, smooth surface of genus $g>1$. A \textit{measured foliation} is a singular foliation (with $k$-pronged singularities) equipped with a measure on transverse arcs, invariant under transverse homotopy. 

If $S$ is given a hyperbolic metric $\sigma$, then a \textit{measured lamination} is a closed set of disjoint simple geodesics on $(S, \sigma)$ together with a transverse measure. There is a natural homeomorphism between the space $\mathcal{MF}(S)$ of measured foliations on $S$ and the space $\mathcal{ML}(S)$ of measured laminations on $(S, \sigma)$, so that the role of $\sigma$ is an auxiliary one. Thurston showed $\mathcal{MF}(S)$ is topologically trivial, being a ball of dimension $6g-6$. The space $\mathbb{P}\mathcal{MF}(S)$ is the boundary of Teichm{\"u}ller space under the Thurston compactification.

If $S$ is given a complex structure $J$, then to any holomorphic quadratic differential $q=q(z)dz^{2}$, one may consider the foliation obtained by integrating the line field $q(v,v) >0.$ When further given the transverse measure defined by $\int_{\alpha}| \Im (\sqrt{q})|$, the resulting measured foliation is called the \textit{horizontal foliation} of $q$. Likewise integrating the line field $q(v,v) <0$ and taking the measure $\int_{\alpha}| \Re (\sqrt{q})|$ gives the \textit{vertical foliation} of $q$. The theorem of Hubbard-Masur \cite{hubbard1979quadratic} states that for a fixed Riemann surface $(S,J)$ and any measured foliation $\mathcal{F}$ on $S$, there is a unique holomorphic quadratic differential $q$, whose horizontal foliation is Whitehead equivalent (i.e. it differs at most by isotopies or expanding or collapsing pronged singularities along straight arcs)  to $\mathcal{F}$. 

Any measured foliation $\mathcal{F}$ on $S$ lifts to a measured foliation $\widetilde{\mathcal{F}}$ on the universal cover $\widetilde{S}$. Taking the leaf space of $\widetilde{\mathcal{F}}$  together with a distance induced by the pushforward of the transverse measure gives an $\mathbb{R}$-tree. When an $\mathbb{R}$-tree is constructed from a measured foliation in this way, the $\mathbb{R}$-tree comes equipped with a $\pi_1(S)$-action from $\widetilde{\mathcal{F}}$. This action is \textit{small}, that is, the stabilizer of an arc never contains a free group of rank 2, and \textit{minimal}, that is, the action does not fix any proper subtree. A result of Skora \cite{skora1996splittings} says that any $\mathbb{R}$-tree with a $\pi_1(S)$-action which is both small and minimal is constructed from a measured foliation on $S$. Such $\mathbb{R}$-trees are said to be dual to a measured foliation, and for our purposes, all $\mathbb{R}$-trees we consider will be dual to a measured foliation.

\subsection{Half-translation surfaces, flat metrics and mixed structures}

A Riemann surface equipped with a holomorphic quadratic differential $q$ is called half-translation surface. This terminology comes from the fact these can be realized by gluing polygons in $\mathbb{C}$ via translations or rotations of angle $\pi$.

A half-translation surface is naturally endowed with a singular flat metric $|q|$, where the singularities are at the zeros of $q$.  Duchin-Leininger-Rafi \cite{DLR_flat} have studied the degeneration of unit-area quadratic differential metrics, and have shown the limits are precisely projectivized (quadratic) mixed structures. A \textit{mixed structure} is a collection of integrable meromorphic quadratic differential metrics on subsurfaces and measured laminations on other subsurfaces, glued along flat annuli to recover the surface $S$. Trivial examples of mixed structures include singular flat metrics on $S$ and measured laminations on $S$. We say that a mixed structure is \emph{properly mixed} if it has a flat piece but it is not a singular flat metric. Mixed structures, when the meromorphic differential is cubic or quartic, appear in the compactification of Hitchin components for $\SL(3, \mathbb{R})$ and $\Sp(4, \mathbb{R})$ (see \cite{OT}, \cite{OT_Sp4}).

A measured lamination $\lambda$ on $S$ is said to \textit{fill} if the complement $S \setminus \lambda$ is a disjoint union of topological disks. A pair $\mathcal{F}_{1}, \mathcal{F}_{2}$ of measured foliations on $S$ is said to \textit{fill} or is \textit{transverse} if for any third foliation $\mathcal{G}$, one has $i(\mathcal{F}_{1}, \mathcal{G}) + i(\mathcal{F}_{2}, \mathcal{G}) >0$. Here $i(\cdot, \cdot)$ denotes the Bonahon intersection pairing, which generalizes the topological intersection number between curves. We remark that the intersection number for the corresponding measured laminations is the same, therefore we can define filling for a pair of measured laminations analogously. Notice that given a holomorphic quadratic differential $q$, the vertical and horizontal foliations of $q$ fill. Conversely, the result by Gardiner-Masur \cite{GM_extremal_length} says that given any pair of filling measured foliations, there exists a unique Riemann surface structure and a unique holomorphic quadratic differential which realizes the original pair as its vertical and horizontal foliation (up to Whitehead equivalence). In particular, a pair of filling measured foliations will determine a unique half-translation surface structure and consequently a unique singular flat quadratic differential metric.

\subsection{Minimal Lagrangians in $\mathbb{H}^{2} \times \mathbb{H}^{2}$}

A \textit{minimal Lagrangian} $\widetilde{\Sigma}$ in $\mathbb{H}^{2} \times \mathbb{H}^{2}$ is a minimal surface which is Lagrangian with respect to the symplectic form $\omega \oplus -\omega$, where $\omega$ is the standard K{\"a}hler form on $\mathbb{H}^{2}$. Any $\rho \in \mathrm{Max}(S)$ acts on $\mathbb{H}^{2} \times \mathbb{H}^{2}$, and Schoen \cite{Schoenharmonic} has shown to each such $\rho$, there is a unique $\rho$-equivariant minimal Lagrangian $\widetilde{\Sigma}_{\rho}$ in $\mathbb{H}^{2} \times \mathbb{H}^{2}$, thereby providing a geometric interpretation to representations in $\mathrm{Max}(S)$. The second author \cite{Charles_dPSL} has studied the degeneration of these minimal Lagrangians, and has shown that one may interpret the space $\mathbb{P}\mathrm{Core}(\mathcal T,\mathcal T)$ as the boundary of the maximal component $\mathrm{Max}(S)$.

\subsection{Induced metrics and projectivized mixed structures}

The induced metric on the unique $\rho$-equivariant minimal Lagrangian descends to a metric on $S$. It is not too difficult (see \cite[Proposition 4.2]{Charles_dPSL}) to see this metric is in fact negatively curved. Hence, by the result of \cite{Otal}, its marked length spectrum determines the metric. The marked length spectrum is the data of both the curve class and the length of its geodesic representative in the given homotopy class. Let Ind$(S)$ denote the space of induced metrics coming from the $\rho$-equivariant minimal Lagrangians. Then in fact one may embed Ind$(S)$ into the space of projectivized marked length spectra. Its closure is then determined to be precisely the space Ind$(S)$ together with the projectivized mixed structures (\cite[Theorem 5.5]{Charles_dPSL}).

\section{Core of product of trees}\label{sec:cores}

In this section we recall the notion of core of a product of trees and describe its geometry in the case of trees dual to measured laminations. The core of a product of two $\mathbb{R}$-trees can actually be defined for any pair of $\mathbb{R}$-trees each admitting a $\pi_{1}(S)$-action. It is not necessary that the $\mathbb{R}$-trees be dual to measured foliations. However, we will specifically mention when particular properties of cores are germane only to $\mathbb{R}$-trees dual to measured foliations. The main reference for the material covered here is \cite{Guirardel}. 

Given an $\mathbb{R}$-tree $T$, a \textit{direction} $\delta$ based at a point $p \in T$ is a connected component of $T \setminus \{p\}$. For a product $T_{1} \times T_{2}$ of $\mathbb{R}$-trees, a \textit{quadrant} $Q$ based at $(p_{1}, p_{2}) \in T_{1} \times T_{2}$ is a product $\delta_{1} \times \delta_{2}$ of directions. If the $\mathbb{R}$-trees $T_{1}, T_{2}$ are equipped with a $\pi_{1}(S)$-action by isometries, then we say a quadrant $Q$ is \textit{heavy} if there exists a sequence $\{\gamma_{n}\} \subset \pi_{1}(S)$ for which: 
\begin{enumerate}[label=\roman*)]
    \item $\gamma_{n} \cdot p_{i} \in \delta_{i} $, and 
    \item $d_{i}(\gamma_{n} \cdot p_{i}, p_{i}) \to \infty$ as $n \to \infty$ .
\end{enumerate}
Otherwise the quadrant is said to be \textit{light}. Following Guirardel \cite{Guirardel}, the \textit{core} $\mathcal{C}(T_{1}, T_{2})$ of $T_{1} \times T_{2}$ is 
\[
T_{1} \times T_{2}  \setminus \bigsqcup_{Q \,\,\text{light}} Q.
\]

When $T_{1}$ and $T_{2}$ are dual to measured laminations, the core $\mathcal{C}(T_{1}, T_{2})$ is always non-empty since the $\pi_1(S)$-actions are irreducible (\cite[Proposition 3.1]{Guirardel}).

However, even when $T_{1}$ and $T_{2}$ are dual to measured foliations, one pathology may still occur: $\mathcal{C}(T_{1}, T_{2})$ may be disconnected. This happens, for instance, when $T_{1}=T_{2}$ and $T_{1}$ is dual to a multicurve. However, in such cases, Guirardel introduced a canonical way of extending the core to a connected subset of $T_{1}\times T_{2}$ with convex fibers. With abuse of terminology, we will still refer to this canonical extension as the core of $T_{1}\times T_{2}$. The following result completely characterizes when this extension needs to be considered.

\begin{deff}\label{def:refinement} Given two real trees $T$ and $T'$ endowed with an action of $\pi_{1}(S)$, we say that $T$ is a {\em refinement} of $T'$ if there is an equivariant map $f:T \rightarrow T'$ such that for all $x,y,z \in T$ if $z$ lies in the geodesic $[x,y]$ connecting $x$ and $y$, then $f(z)$ belongs to $[f(x), f(y)]$.
\end{deff}

\begin{prop}[{\cite[Proposition 4.14]{Guirardel}}]\label{prop:augmented} Let $T_{1}$ and $T_{2}$ be trees dual to measured laminations. Then the core $\mathcal{C}(T_{1}, T_{2})$ is disconnected if and only if $T_{1}$ and $T_{2}$ are refinements of a common non-trivial simplicial tree $T$.
\end{prop}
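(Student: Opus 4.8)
The plan is to prove both implications by analyzing the structure of fibers of the projection maps $\pi_i : \mathcal{C}(T_1,T_2) \to T_i$, together with the way a common simplicial refinement forces the core to split.

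\emph{The ``if'' direction.} Suppose $T_1$ and $T_2$ are both refinements of a common non-trivial simplicial tree $T$, with equivariant morphisms $f_i : T_i \to T$. Let $e$ be an open edge of $T$, and let $v$ be one of its endpoints. I would pull back the midpoint (or any interior point) of $e$ under $f_1$ and $f_2$ to obtain nonempty closed $\pi_1(S)$-invariant subsets of $T_1$ and $T_2$; removing these disconnects each $T_i$ into ``sides.'' The key point is that, because $f_i$ sends geodesics to geodesics (in the sense of Definition~\ref{def:refinement}) and $T$ is simplicial, no $\gamma \in \pi_1(S)$ can translate a point from one side of the edge-preimage in $T_1$ to the other while simultaneously moving its $T_2$-coordinate an unbounded distance on the ``wrong'' side — any such $\gamma$ would have to move the base point of $e$ in $T$, but then iterating $\gamma$ sends the $T$-coordinate to an end of $T$ and the geodesic condition pins down which side each coordinate lands on. This gives a quadrant $Q = \delta_1 \times \delta_2$ (the two ``far'' sides) that is light, and one on the ``near'' side that is light as well, so the fiber $\pi_i^{-1}(x)$ over a point $x$ mapping to the interior of $e$ is disconnected in $T_1 \times T_2$; running this over the edge shows $\mathcal{C}(T_1,T_2)$ is disconnected. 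The cleanest way to package this is probably to observe directly that the preimage in the core of an interior point of the edge $e$ separates $\mathcal{C}(T_1,T_2)$.

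\emph{The ``only if'' direction.} This is the harder direction and I expect it to be the main obstacle. Assume $\mathcal{C}(T_1,T_2)$ is disconnected. Since the $\pi_1(S)$-action on each $T_i$ is minimal and the actions are irreducible (so the core is nonempty), Guirardel's analysis shows the core has connected, convex fibers over each $T_i$ away from a controlled locus; a disconnection must therefore come from a genuine ``gap'' in some fiber $\pi_1^{-1}(p)$, which is a disjoint union of closed subintervals of $T_2$ separated by an arc $J \subset T_2$ all of whose points pair with $p$ only through light quadrants. I would then show the set of such ``separating'' data is $\pi_1(S)$-invariant and, crucially, that the arcs $J$ that arise cannot accumulate — i.e., the separating locus in $T_1 \times T_2$ is discrete modulo the group action. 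Collapsing each complementary piece of the core to a point and taking the resulting quotient of $T_1$ (equivalently: the tree obtained from $T_1$ by collapsing the closures of the ``sides'' of the separating arcs) produces a $\pi_1(S)$-tree $T$ which is simplicial (the separating locus being discrete gives the edges) and non-trivial (the core is genuinely disconnected, so there is at least one edge and the action does not fix a point, using minimality). The geodesic-to-geodesic property of the collapse map $T_1 \to T$ is automatic for a collapse, and by the symmetry of the construction $T_2 \to T$ is also a refinement; one checks $T$ is the same tree from both sides because the separating arcs determine a matching partition of the ends.

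\emph{Main obstacle.} The delicate step is verifying that the separating locus is simplicial — that the ``gaps'' in the fibers are isolated and of locally finite degree — rather than forming, say, a Cantor-like family. Here is where I would lean hardest on the hypothesis that $T_1, T_2$ are dual to \emph{measured laminations} (not arbitrary $\pi_1(S)$-trees): dual trees of laminations have the property that the transverse measure has no atoms unless the lamination contains a closed leaf, and a disconnection of the core corresponds precisely to a common closed-leaf (multicurve) component, which is simplicial by construction. So the argument should reduce to: a disconnection of $\mathcal{C}(T_1,T_2)$ forces the laminations $\lambda_1, \lambda_2$ to share a multicurve component $c$, and collapsing everything transverse to $c$ in each $T_i$ yields the common simplicial tree $T$ dual to $c$. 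I would cite or re-derive the relevant structural facts from \cite{Guirardel} (in particular the convex-fiber picture and Proposition~4.14's proof) rather than reproving them from scratch, and focus the write-up on the translation between ``disconnected core,'' ``shared multicurve,'' and ``common simplicial refinement.''
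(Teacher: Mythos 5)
The paper itself does not prove this statement: it is quoted verbatim from Guirardel (his Proposition 4.14), so there is no in-paper argument to compare yours against, and your closing plan to ``cite or re-derive \dots Proposition 4.14's proof'' is circular --- that proposition \emph{is} the statement you are being asked to prove. Setting that aside, the parts of your sketch that are your own contain two genuine problems. First, the mechanism you propose for the ``only if'' direction cannot occur: for any pair of trees, the fiber of the core over a point $p_1\in T_1$ is
$T_2\setminus\bigcup\{\delta_2\ :\ \delta_1\times\delta_2 \text{ light},\ p_1\in\delta_1\}$,
an intersection of closed complements of directions, hence a convex (in particular connected, possibly empty) subtree --- a fact you yourself invoke. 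So a disconnection is never a ``gap inside a fiber $\pi_1^{-1}(p)$''; it arises transversally, from an invariant family of points (or convex subsets) of $T_1$ over which the fiber is \emph{empty} while core points exist on both sides, as in the multicurve example. Your subsequent steps (discreteness of the ``separating locus'', the collapse producing a simplicial tree) are built on this incorrect picture and are asserted rather than proved. The alternative reduction you offer --- ``disconnected core $\Leftrightarrow$ $\lambda_1,\lambda_2$ share a closed leaf, and the common simplicial tree is the one dual to the shared multicurve'' --- is a correct statement in this dual-to-laminations setting, but its forward implication is exactly as hard as the ``only if'' direction itself; to prove it you would need, for instance, that the dual tree of a lamination with no closed leaves admits no equivariant alignment-preserving map onto any non-trivial simplicial tree (via its decomposition into indecomposable pieces glued along points), none of which appears in your write-up.

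Second, in the ``if'' direction the light-quadrant computation over an open edge $e$ of the common collapse $T$ is the right idea, but showing that the core misses the slab over the interior of $e$ only gives that the core is contained in two separated pieces; to conclude disconnectedness you must also show both pieces are non-empty. This requires using that the actions dual to laminations are minimal and irreducible (so the core is non-empty, its projections to the factors are onto, and both sides of $e$ are genuinely used by the group because $T$ is non-trivial); your sketch never addresses this. Both gaps are fixable, but as written the ``only if'' direction is not a proof, and the honest route --- the one the paper takes --- is simply to invoke \cite{Guirardel}.
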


For example the assumptions of Proposition \ref{prop:augmented} are satisfied if $T_{1}$ and $T_{2}$ are dual to measured laminations $\lambda_{1}$ and $\lambda_{2}$ with common isolated leaves. 

When $T_{1}$ and $T_{2}$ are both dual to measured laminations $\lambda_{1}$ and $\lambda_{2}$, we can actually realize the core $\mathcal{C}(T_{1}, T_{2})$ more concretely. Before describing this construction, we need the following result, which can be seen as a special case of the decomposition theorem for general geodesic currents in (\cite{BIPP}, see also \cite{BIPPmetric}) about how two measured laminations interact on subsurfaces.

\begin{lemma}\label{lm:decomposition_pair_laminations} Let $\lambda_{1}$ and $\lambda_{2}$ be measured laminations on $S$. Then there is a system of non-trivial, pairwise non-homotopic, disjoint, simple closed curves $\gamma_{1}, \dots, \gamma_{n}$ such that on each connected component $S'$ of $S\setminus\cup_{j} \gamma_{j}$ either
\begin{enumerate}[label=\roman*)]
    \item $\lambda_{1}+\lambda_{2}$ is a (possibly zero) measured lamination on $S'$; or
    \item $\lambda_{1}$ and $\lambda_{2}$ are transverse and fill $S'$, i.e. for all measured lamination $\nu$ on $S'$ we have $i(\lambda_{1}, \nu)+i(\lambda_{2},\nu)\neq 0$.
\end{enumerate}
\end{lemma}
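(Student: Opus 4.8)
The plan is to produce the curve system $\gamma_1,\dots,\gamma_n$ by a maximality argument on the set of disjoint simple closed curves along which one can meaningfully ``cut''. First I would fix auxiliary hyperbolic structures and realize $\lambda_1,\lambda_2$ as geodesic laminations; the key object to track is the union $\lambda_1\cup\lambda_2$ as a closed subset of $S$, or rather its ``non-filling locus''. Concretely, I would call a simple closed geodesic $\gamma$ \emph{admissible} if $\gamma$ is disjoint from both $\lambda_1$ and $\lambda_2$ (equivalently $i(\lambda_1,\gamma)=i(\lambda_2,\gamma)=0$) and $\gamma$ is not homotopic to a leaf or boundary component of either lamination in a degenerate way. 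One checks that a maximal collection $\{\gamma_j\}$ of pairwise disjoint, pairwise non-homotopic admissible curves is finite (bounded by $3g-3$), and then argues that on each complementary component $S'$ of $S\setminus\bigcup\gamma_j$ exactly one of the two alternatives holds.

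The heart of the argument is the dichotomy on a fixed complementary piece $S'$. Restrict $\lambda_1$ and $\lambda_2$ to $S'$; each becomes a (possibly empty) measured lamination on $S'$ together with a transverse measure, and no admissible curve survives inside $S'$ by maximality. If for every measured lamination $\nu$ on $S'$ one has $i(\lambda_1,\nu)+i(\lambda_2,\nu)>0$, we are in case (ii): the two laminations are transverse (any common leaf would force $i(\lambda_1,\lambda_2)$-type cancellation, or produce a non-filling direction) and fill $S'$. Otherwise there is some $\nu$ on $S'$ with $i(\lambda_1,\nu)+i(\lambda_2,\nu)=0$, i.e.\ $\nu$ is disjoint from $\lambda_1\cup\lambda_2$; I would then take a minimal sublamination / component of $\nu$. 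If that component is a simple closed curve it is admissible, contradicting maximality — unless it is already one of the $\gamma_j$, which is excluded since it lies in the interior of $S'$. If the component is not a closed curve, it is a minimal lamination disjoint from both $\lambda_i$, so $\lambda_1,\lambda_2$ are supported in a proper subsurface of $S'$; passing to the subsurface filled by $\lambda_1\cup\lambda_2$, its frontier curves would again be admissible, contradiction. Hence in this case $\lambda_1$ and $\lambda_2$ are supported on a subsurface whose complement in $S'$ is a union of annuli and, after absorbing, one concludes that $\lambda_1\cup\lambda_2$ is itself a lamination on $S'$ with no transverse intersections, giving case (i), where $\lambda_1+\lambda_2$ makes sense as a single measured lamination precisely because the supports do not cross.

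For the ``no crossing'' point in case (i) — which is what lets us write $\lambda_1+\lambda_2$ rather than just an unordered pair — I would observe that if $\lambda_1$ and $\lambda_2$ had a transverse intersection in $S'$ but did not fill, the subsurface they do fill would be proper in $S'$ with essential frontier curves, and those frontier curves are admissible and interior to $S'$, again contradicting maximality. So inside each $S'$ either the supports are disjoint (case (i)) or they fill ($S'$ is ``totally crossed'', case (ii)). Finally I would note that one can throw away any $\gamma_j$ that bounds with its neighbor a component on which $\lambda_1+\lambda_2=0$ adjacent to another such component, to ensure the $\gamma_j$ are non-homotopic and the decomposition is as stated; this is a cosmetic cleanup.

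The main obstacle I expect is the precise bookkeeping in the non-filling, non-disjoint sub-case: showing that a transverse pair that fails to fill \emph{all} of $S'$ necessarily produces an \emph{essential, non-peripheral} curve in $S'$ disjoint from both laminations. This is where one must be careful about thin annular neighborhoods, peripheral curves of $S'$, and the possibility that the ``filled subsurface'' is an annulus or once-punctured disk; handling those degenerate frontier cases cleanly — rather than the generic case — is the delicate part. One can sidestep some of this by invoking the decomposition theorem for geodesic currents of \cite{BIPP} directly, applied to the current $\lambda_1+\lambda_2$, which packages exactly this subsurface decomposition; the proof then reduces to checking that the two alternatives there specialize to (i) and (ii) above.
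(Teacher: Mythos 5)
Your overall strategy is the same as the paper's: take a maximal system of disjoint, pairwise non-homotopic simple closed curves $\gamma_j$ with $i(\lambda_1,\gamma_j)+i(\lambda_2,\gamma_j)=0$ and analyze each complementary piece $S'$. However, your treatment of the non-filling pieces has a genuine gap, rooted in the identification ``$i(\lambda_1,\nu)+i(\lambda_2,\nu)=0$, i.e.\ $\nu$ is disjoint from $\lambda_1\cup\lambda_2$''. Vanishing intersection only forbids transverse crossings: $\nu$ may share leaves with the $\lambda_i$ (for instance $\nu=\lambda_1$ when $\lambda_1=\lambda_2$ is a lamination filling $S'$, a perfectly legitimate case~(i) configuration). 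Because of this, the ``contradictions'' in your second paragraph (an admissible closed component of $\nu$, or admissible frontier curves of the subsurface filled by $\lambda_1\cup\lambda_2$) simply do not occur in honest case~(i) pieces, so the concluding ``Hence \dots giving case (i)'' is a non sequitur; and the dichotomy ``supports disjoint versus filling'' is false, since in case~(i) the supports may coincide or overlap.

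The same issue undermines the key step of your third paragraph. If $\lambda_1,\lambda_2$ cross somewhere in $S'$ but fail to fill in the sense of~(ii), the witness $\nu$ may be a common irrational sublamination of $\lambda_1$ and $\lambda_2$; it is then not immediate that ``the subsurface they do fill'' is proper in $S'$, because a priori other components of $\lambda_1,\lambda_2$ could cross the frontier of the subsurface $S''$ filled by $\nu$, so that the union fills all of $S'$ while~(ii) still fails. Ruling this out is exactly the content of the paper's argument: if $\nu$ fills $S'$, then the supports of $\lambda_1,\lambda_2$ must lie in the support of $\nu$ (hence no crossings at all); if $\nu$ fills only a proper $S''$, then by maximality some $\lambda_i$ meets a non-peripheral frontier curve of $S''$ transversely, hence must cross $\nu$, contradicting $i(\lambda_i,\nu)=0$. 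This ``anything entering a subsurface filled by $\nu$ must cross $\nu$'' reduction is the heart of the proof and is absent from your proposal; you flag it yourself as the expected obstacle and propose to fall back on the decomposition theorem of \cite{BIPP}, which would be legitimate but is precisely what the paper proves directly in this special case. Two smaller points: a closed component of $\nu$ could be peripheral in $S'$ (homotopic to some $\gamma_j$), which yields no contradiction with maximality, and ``lying in the interior of $S'$'' is not the same as being non-homotopic to the $\gamma_j$.
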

\begin{proof} Consider a maximal collection of non-trivial, pairwise non-homotopic, disjoint, simple closed curves $\gamma_{j}$ such that
\[
    i(\lambda_{1}, \gamma_{j})+i(\lambda_{2}, \gamma_{j})=0.
\]
We claim that this collection of curves satisfies the requirement of the lemma. Indeed, let $S'$ be a connected component of $S\setminus \cup_{j}\gamma_{j}$. We need to show that if the pair $(\lambda_{1}, \lambda_{2})$ does  not fill the subsurface $S'$, then $\lambda_{1}+\lambda_{2}$ is a lamination on $S'$, or, equivalently, $\lambda_{1}$ and $\lambda_{2}$ are nowhere transverse on $S'$. The claim is clearly true if the support of either $\lambda_{1}$ or $\lambda_{2}$ does not intersect $S'$, so we can assume that both have support on $S'$. Because the pair $(\lambda_{1}, \lambda_{2})$ does not fill $S'$ by assumption, there is a measured lamination $\nu$ on $S'$ such that $i(\lambda_{1}, \nu)+i(\lambda_{2}, \nu)=0$.
On the other hand, by hypothesis, $i(\lambda_{1}, \gamma)+i(\lambda_{2}, \gamma)\neq 0$ for all non-peripheral simple closed curves $\gamma$ on $S'$. Therefore, the measured lamination $\nu$ does not contain isolated closed leaves. 
Let us first consider the case in which $\nu$ fills the subsurface $S'$, in the sense that the complement of $\nu$ only consists of disks and annuli. We note that then necessarily the support of $\nu$ must contain the support of $\lambda_{1}$ and $\lambda_{2}$ because otherwise $\lambda_{1}$ and $\lambda_{2}$ would intersect $\nu$ transversely somewhere. But this implies that $\lambda_{1}$ and $\lambda_{2}$ are nowhere transverse, being both contained in the support of a measured lamination. 
We now reduce the general case to this, by showing that $\nu$ must fill $S'$. Assume the opposite, and let $S''\subset S'$ be a subsurface filled by $\nu$. Note that at least one between $\lambda_{1}$ and $\lambda_{2}$ intersects the boundaries of $S''$ transversely. Without loss of generality we assume it is $\lambda_{1}$. Since $\nu$ fills $S''$, the support of $\lambda_{1}$ intersects $\nu$ transversely, but this contradicts the fact that $i(\nu, \lambda_{1})=0$. 
\end{proof}

The last ingredient we need is an explicit realization of a tree $T_{\lambda}$ dual to a measured lamination $\lambda$. The construction goes as follows (see \cite{MO_growth} for more details). Fix an auxiliary hyperbolic metric on $S$ and identify $\widetilde{S}$ with $\h^{2}$. Let $\widetilde{\lambda}$ be the lift of $\lambda$ under the covering map $\pi: \h^{2} \rightarrow S$. We define the metric space $\mathrm{pre}(T_{\lambda})$ where points of $\mathrm{pre}(T_{\lambda})$ are the connected components of $\h^{2} \setminus \widetilde{\lambda}$ and the distance is computed as follows: if $x,y \in \mathrm{pre}(T_{\lambda})$ correspond to connected components $C_{x},C_{y}$ of $\h^{2} \setminus \widetilde{\lambda}$ then 
\[
    d_{\lambda}(x,y)=\inf\left\{ \int_{\gamma} d\widetilde{\lambda} \ \Big| \ \gamma:[0,1] \rightarrow \h^{2}, \ \gamma(0) \in C_{x}, \ \gamma(1) \in C_{y}\right\}
\] 
The tree $T_{\lambda}$ is then the unique $\R$-tree that contains $\mathrm{pre}(T_{\lambda})$ such that any point of $T_{\lambda}$ lies in a segment with vertices in $\mathrm{pre}(T_{\lambda})$. Note that we have a natural projection map $p_{\lambda}: \h^{2} \setminus \widetilde{\lambda} \rightarrow T_{\lambda}$. If $\lambda$ has no isolated leaves, this map extends continuously to a map, still denoted by $p_{\lambda}$, defined on the entire $\h^{2}$. Otherwise, the continuous extension is obtained by first replacing each isolated leaf $\ell$ in $\widetilde{\lambda}$ with a strip $\ell \times [-\epsilon, \epsilon]$ endowed with a uniform measure with total mass equal to $\widetilde{\lambda}_{|_{\ell}}$. 

There is also another way of realizing the tree dual to a measured lamination using the language of measured foliations. Let $\mathcal{F}$ denote the measured foliation corresponding to the measured lamination $\lambda$ under the homeomorphism between $\mathcal{MF}(S)$ and $\mathcal{ML}(S)$. Let $\widetilde{\mathcal{F}}$ be its lift to $\h^{2}$. Then the tree $T_{\lambda}$ can be defined as the quotient $\h^{2}/ \sim$, where $\sim$ denotes the equivalence relation
\[
    x \sim y \Leftrightarrow d_{\mathcal{F}}(x,y)=0
\]
and
\[
    d_{\mathcal{F}}(x,y)=\inf\{ i(\widetilde{\mathcal{F}}, \gamma) \ | \ \gamma:[0,1] \rightarrow \h^{2}, \ \gamma(0)=x, \ \gamma(1)=y\}.
\]
More concretely, $T_{\lambda}$ identifies with the leaf space of $\widetilde{\mathcal{F}}$ with distance given by integrating the measure of $\widetilde{\mathcal{F}}$ along arcs transverse to the leaves. We denote by $\pi_{\lambda}$ the natural projection $\pi_{\lambda}:\h^{2} \rightarrow T_{\lambda}$. 

We are now ready to describe the core of a product of two trees $T_{1}$ and $T_{2}$ dual to measured laminations $\lambda_{1}$ and $\lambda_{2}$ on $S$.  Lemma \ref{lm:decomposition_pair_laminations} furnishes a decomposition of $S$ into subsurfaces that we lift to a decomposition of $\h^{2}$. The regions of this decomposition come in two flavors according to whether they project to subsurfaces where $\lambda_{1}+\lambda_{2}$ is a lamination or to subsurfaces where the pair $(\lambda_{1}, \lambda_{2})$ fills. Following the statement of Lemma \ref{lm:decomposition_pair_laminations}, we call these regions of type $i)$ and type $ii)$, respectively. On the regions $\Omega \subset \h^{2}$ of type $i)$, the union of the lifts $\widetilde{\lambda}_{1}$ and $\widetilde{\lambda}_{2}$ can be regarded as the lift of the measured lamination $\lambda_{0}=\lambda_{1}+\lambda_{2}$. We denote by $T_{0}$ the tree dual to $\lambda_{0}$. Note that, for each region $\Omega \subset \h^{2}$ of type $i)$, we have a map $p_{0}:=p_{\lambda_{0}}$, as defined before, and two natural collapsing maps $c_{j}: T_{0} \rightarrow T_{j}$ for $j=1,2$. On the regions of type $ii)$, we replace the measured laminations $\widetilde{\lambda}_{i}$ with the corresponding measured foliations $\widetilde{\mathcal{F}}_{i}$
and consider the projections $\pi_{i}:=\pi_{\lambda_{i}}$ as described previously. Following Guirardel (\cite[Example 4]{Guirardel}, \cite[Proposition 6.1]{Guirardel}), the core $\mathcal{C}(T_{1}, T_{2})$ is the image of the map $F:\widetilde{S}\rightarrow T_{1}\times T_{2}$ defined as follows:
\begin{equation}\label{eq:F}
    F(x) = \begin{cases}
            (\pi_{1}\times \pi_{2})(x) \hspace{2cm} \text{if $x$ belongs to a region of type $ii)$} \\
            (c_{1}\times c_{2})({p_{0}}(x)) \hspace{1.45cm}  \text{if $x$ belongs to a region of type $i)$} \ . 
    \end{cases} 
\end{equation}
Note that $F$ is well-defined and continuous on the boundary $\widetilde{\gamma}$ between two different regions of $\widetilde{S}$ because $\widetilde{\gamma}$ is the lift of a curve $\gamma_{j}$ given by Lemma \ref{lm:decomposition_pair_laminations} which, by definition, has vanishing intersection number with $\lambda_{0}$, $\mathcal{F}_{1}$, and $\mathcal{F}_{2}$, hence $(\pi_{1}\times \pi_{2})(\widetilde{\gamma})$ and $(c_{1}\times c_{2}) ({p_{0}}(\widetilde{\gamma}))$ is a single point. 

It follows from this explicit description of $\mathcal{C}(T_{1},T_{2})$ that the core is, in general, a 2-dimensional subcomplex of $T_{1}\times T_{2}$ that is invariant under the diagonal action of $\pi_{1}(S)$. Moreover, the $2$-dimensional pieces of $\mathcal{C}(T_{1},T_{2})$ are exactly the images of regions of type $ii)$ and are foliated by two families of transverse foliations. Their quotients under the group action are the union of the subsurfaces of $S$ in which $\lambda_{1}$ and $\lambda_{2}$ fill, endowed with the foliations $\mathcal{F}_{1}$ and $\mathcal{F}_{2}$ (\cite[Example 4]{Guirardel}). In particular, the 2-dimensional pieces of $\mathcal{C}(T_{1},T_{2})$ are the universal covers of half-translation surfaces. On the other hand, the images under $F$ of regions $\Omega$ of type $i)$ are $1$-dimensional subcomplexes of $T_{1}\times T_{2}$. Each such $\Omega$ can be seen as the universal cover of a subsurface $S'$ of $S$ where the restriction of $\lambda_{1}+\lambda_{2}$ is a measured lamination. Let $T_{1}'\subset T_{1}$ and $T_{2}'\subset T_{2}$ be the corresponding subtrees. It turns out (\cite[Section 6]{Guirardel}) that $F(\Omega)$ is an $\R$-tree that is a common refinement of $T_{1}'$ and $T_{2}'$ if endowed with the distance
\[
    d_{0}(x,y)=d_{1}(x_{1},y_{1})+d_{2}(x_{2},y_{2}) \ \ \ \ \text{$x=(x_{1},x_{2}), y=(y_{1},y_{2}) \in T_{1}'\times T_{2}'$}
\]
where $d_{j}$ denotes the distance on $T_{j}$. 

\begin{lemma}\label{lm:1-dim core} The $\R$-tree $(F(\Omega), d_{0})$ is isometric to the tree dual to the measured lamination $\lambda_{0}=\lambda_{1}+\lambda_{2}$ restricted to $S'$.
\end{lemma}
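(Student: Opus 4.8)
The plan is to promote the factorization of $F$ over $\Omega$ into an explicit isometry. On a region $\Omega$ of type $i)$ the map of \eqref{eq:F} reads $F|_\Omega=(c_1\times c_2)\circ p_0$, and since the projection $p_0=p_{\lambda_0}:\Omega\to T_0$ onto the tree dual to $\lambda_0=(\lambda_1+\lambda_2)|_{S'}$ is onto, the image $F(\Omega)$ equals $\Phi(T_0)$, where $\Phi:=c_1\times c_2:T_0\to T_1'\times T_2'$. This $\Phi$ is $\pi_1(S')$-equivariant because $c_1$ and $c_2$ are, and it is surjective onto $F(\Omega)$ by construction, so it suffices to check that $\Phi$ is an isometric embedding of $(T_0,d_{\lambda_0})$ into $(F(\Omega),d_0)$; since $(F(\Omega),d_0)$ is already known to be an $\R$-tree (\cite[Section~6]{Guirardel}), this identifies it with $(T_0,d_{\lambda_0})$, equivariantly.

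For the isometric embedding, I would fix the auxiliary hyperbolic metric used in Section~\ref{sec:cores} and realize $T_0$, $T_1'$, $T_2'$ through the $\mathrm{pre}$-construction. The crucial point is that, $\Omega$ being of type $i)$, Lemma~\ref{lm:decomposition_pair_laminations} tells us that the leaves of $\widetilde\lambda_1$ and $\widetilde\lambda_2$ are pairwise non-transverse, so that $\widetilde\lambda_0=\widetilde\lambda_1\cup\widetilde\lambda_2$ is a genuine geodesic lamination on $\Omega$ whose transverse measure is the sum of those of $\widetilde\lambda_1$ and $\widetilde\lambda_2$, and $c_j$ is induced by the inclusion of each complementary component of $\widetilde\lambda_0$ into the complementary component of $\widetilde\lambda_j$ containing it. For $s,t$ in the dense set $\mathrm{pre}(T_0)$, corresponding to complementary components $C_s,C_t$ of $\Omega\setminus\widetilde\lambda_0$, the distance $d_{\lambda_0}(s,t)$ is the total $\widetilde\lambda_0$-mass of the set of leaves of $\widetilde\lambda_0$ separating $C_s$ from $C_t$; and since $C_s\subset c_j(s)$, $C_t\subset c_j(t)$ and no leaf of $\widetilde\lambda_j$ enters these components, a leaf of $\widetilde\lambda_j$ separates $C_s$ from $C_t$ if and only if it separates $c_j(s)$ from $c_j(t)$, so the same description computes $d_j(c_j(s),c_j(t))$. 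Additivity of the transverse measures, $\widetilde\lambda_0=\widetilde\lambda_1+\widetilde\lambda_2$, then yields
\[
d_0(\Phi(s),\Phi(t))=d_1(c_1(s),c_1(t))+d_2(c_2(s),c_2(t))=d_{\lambda_0}(s,t),
\]
and this identity extends to all of $T_0$ by density of $\mathrm{pre}(T_0)$ and continuity. Hence $\Phi$ is an isometric embedding, and with the first paragraph this is the isometry claimed.

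The step I expect to require the most care is the one hidden in ``additivity of the measures'': a priori $d_0=d_1+d_2$ is merely the $\ell^1$-sum of two tree pseudometrics pulled back along the unrelated maps $c_1,c_2$, which in general is neither a tree metric nor the distance on a dual tree, so the argument succeeds only because $T_0$, $T_1'$, $T_2'$ and $c_1$, $c_2$ all arise from the single compatible family $\lambda_1,\lambda_2,\lambda_0=\lambda_1+\lambda_2$ — exactly the structure encoded in Guirardel's description of $F(\Omega)$ over type $i)$ regions in \cite[Section~6]{Guirardel}, which I would invoke rather than re-derive. A secondary technical nuisance is the presence of isolated leaves of $\lambda_0$, which forces the strip replacement used to extend $p_{\lambda_0}$ continuously to all of $\Omega$; one checks that this thickening changes neither the mass of any separating set of leaves nor the maps $c_j$, so the computation above goes through, for instance after restricting to the dense set of points of $\Omega$ not lying in such strips.
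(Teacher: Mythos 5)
Your proposal is essentially correct, but it takes a genuinely different route from the paper. The paper never computes distances on $F(\Omega)$ at all: it observes that the $\pi_1(S')$-action on $(F(\Omega),d_0)$ is minimal and irreducible, so by the uniqueness theorem for such actions (\cite[Theorem A.5]{GL_JSJ}) the isometry type is determined by the translation length function, and then notes that this length function is $\ell_1+\ell_2$, which is precisely the length function of the tree dual to $\lambda_0=\lambda_1+\lambda_2$. You instead exhibit the isometry explicitly as the equivariant map $c_1\times c_2\colon T_0\to T_1'\times T_2'$ and verify $d_1+d_2=d_{\lambda_0}$ directly from the fact that, on a type $i)$ region, $\widetilde\lambda_1$ and $\widetilde\lambda_2$ are nowhere transverse and their transverse measures add. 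Your approach buys an explicit equivariant identification and avoids the length-spectrum rigidity theorem (and any discussion of minimality/irreducibility, which in the paper's argument silently excludes degenerate cases such as $\lambda_0|_{S'}=0$); the paper's approach is shorter and bypasses all pointwise metric bookkeeping.

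One caveat on that bookkeeping. Your extension step ``by density of $\mathrm{pre}(T_0)$'' is not literally available when $\lambda_0$ has isolated leaves: then $\mathrm{pre}(T_0)$ is \emph{not} dense in $T_0$ (if $\lambda_0$ is a weighted multicurve, $T_0$ is simplicial and $\mathrm{pre}(T_0)$ is its vertex set), and restricting to points of $\Omega$ outside the strips does not cure this, since the missing points are interior points of edges of $T_0$. What is needed there is the observation that on an edge of $T_0$ produced by an isolated leaf carrying masses $a_1$ and $a_2$ for $\lambda_1$ and $\lambda_2$, the collapsing maps $c_1,c_2$ are the affine maps onto edges of lengths $a_1$ and $a_2$ respectively (collapsing the edge if $a_j=0$), so that $d_1+d_2=d_{\lambda_0}$ holds on edge interiors as well; this is routine and, together with your separating-leaves computation on $\mathrm{pre}(T_0)$, completes the proof.
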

\begin{proof} The tree $F(\Omega)$ inherits from $T_{1}'\times T_{2}'$ an isometric action of $\pi_{1}(S')$. We can define a length function
\begin{align*}
    \ell: \pi_{1}(S') &\rightarrow \R^{+} \\
            \gamma &\mapsto \lim_{n \to +\infty}\frac{1}{n}d_{0}(x, \gamma^{n}\cdot x)
\end{align*}
where $x$ is any point in $F(\Omega)$ (the definition is independent of the choice of $x$). Since the action of $\pi_{1}(S')$ on $F(\Omega)$ is minimal and irreducible, by \cite[Theorem A.5]{GL_JSJ}, the isometry class of $(F(\Omega), d_{0})$ is completely determined by its length function. However, it is clear from the definition of $\ell$ and $d_{0}$ that $\ell=\ell_{0}:=\ell_{1}+\ell_{2}$, where $\ell_{j}$ denotes the analogously defined length functions on $T_{1}'$ and $T_{2}'$. On the other hand, $\ell_{0}$ is exactly the length function of the tree dual to the measured lamination $\lambda_{0}$, and the claim follows.
\end{proof}

The ambient space $T_{1}\times T_{2}$ has, however, another natural distance defined by
\[
    d(x,y)=\sqrt{d_{1}(x_{1},y_{1})^{2}+d_{2}(x_{2},y_{2})^{2}} \ \ \ \ \text{$x=(x_{1},x_{2}), y=(y_{1},y_{2}) \in T_{1}\times T_{2}$} \ . 
\]
This induces a path metric $d_\mathcal{C}$ on the core $\mathcal{C}$ of $T_{1}\times T_{2}$, where the $d_{\mathcal{C}}$-distance between two points in the core is the infimum of the length of all paths connecting the points and entirely contained in the core, where the length is computed using the distance $d$. Guirardel showed (\cite[Proposition 4.9]{Guirardel}) that the core is a CAT(0)-space if endowed with this path distance $d_{\mathcal{C}}$. In particular, since $F(\Omega)$ does not contain topological circles by Lemma \ref{lm:1-dim core}, we can conclude that $F(\Omega)$ endowed with the restriction of $d_{\mathcal{C}}$ is still an $\R$-tree. 

We will denote by $\Core(\mathcal{T}, \mathcal{T})$ the space of cores of the product of two trees dual to measured laminations on $S$ endowed with this path distance.

\begin{prop} \label{prop:core=productlaminations} $\Core(\mathcal{T}, \mathcal{T})$ is homeomorphic to $\mathcal{ML}(S) \times \mathcal{ML}(S)$.
\end{prop}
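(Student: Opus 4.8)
The plan is to exhibit an explicit bijection $\Phi\colon \mathcal{ML}(S)\times\mathcal{ML}(S)\to\Core(\mathcal T,\mathcal T)$ sending a pair $(\lambda_1,\lambda_2)$ to the core $\mathcal C(T_{\lambda_1},T_{\lambda_2})$ (with its path distance $d_{\mathcal C}$), and then to promote this bijection to a homeomorphism by a standard topological argument. By Skora's theorem and the remarks of Section 2, the assignment $\lambda\mapsto T_\lambda$ is a canonical bijection between $\mathcal{ML}(S)$ and (equivariant isometry classes of) small minimal $\mathbb R$-trees dual to measured laminations, so $\Phi$ is well defined. The core of $T_{\lambda_1}\times T_{\lambda_2}$ — meaning Guirardel's augmented core when the two trees are refinements of a common simplicial tree, via Proposition \ref{prop:augmented} — is canonically determined by the ordered pair $(T_{\lambda_1},T_{\lambda_2})$, hence $\Phi$ is well defined as a map into $\Core(\mathcal T,\mathcal T)$, and it is surjective by the very definition of that space.

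The injectivity of $\Phi$ is where the real content lies. Given the core $\mathcal C(T_1,T_2)$ as an abstract metric space with $\pi_1(S)$-action, I must recover the ordered pair $(\lambda_1,\lambda_2)$. The key observation is that the length function $\gamma\mapsto\ell_{\mathcal C}(\gamma)$ of the diagonal action on $(\mathcal C,d_{\mathcal C})$ is not quite $\ell_{\lambda_1}+\ell_{\lambda_2}$ in general (because of the $\sqrt{\cdot}$ in $d$ versus the sum in $d_0$), but the two factor projections $\pi_i\colon \mathcal C\to T_i$ are canonically reconstructible from the CAT(0) geometry: the two foliations on the $2$-dimensional pieces (described after equation \eqref{eq:F}) and the two collapsing directions on the $1$-dimensional pieces give exactly the two tree-projections, and collapsing along one foliation recovers $T_1$, along the other $T_2$. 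Concretely I would argue that $T_i$ is the quotient of $\mathcal C$ by the equivalence relation "joined by a segment of $d$-length equal to the $d_i$-difference of the complementary coordinate being zero", i.e. the fibers of $\pi_i$; these fibers are intrinsically characterized as the maximal subsets on which $d_{\mathcal C}$ agrees with the pullback of the other factor's metric. Once $T_1$ and $T_2$ are recovered, Skora gives back $\lambda_1$ and $\lambda_2$, so $\Phi$ is injective.

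It then remains to check that $\Phi$ and $\Phi^{-1}$ are continuous for the equivariant Gromov–Hausdorff (equivalently, length-spectrum) topology on both sides. For continuity of $\Phi$: if $\lambda_1^{(n)}\to\lambda_1$ and $\lambda_2^{(n)}\to\lambda_2$ in $\mathcal{ML}(S)$, then $T_{\lambda_i^{(n)}}\to T_{\lambda_i}$ equivariant-Gromov–Hausdorff, and one checks that the map $F$ of equation \eqref{eq:F} depends continuously on the input laminations — the only subtlety being the combinatorial decomposition of Lemma \ref{lm:decomposition_pair_laminations}, which can jump, but only in a way that is absorbed by the continuity of the collapse maps $c_j$ and the fact that curves $\gamma_j$ along which the decomposition is cut have vanishing intersection with $\lambda_1+\lambda_2$. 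For continuity of $\Phi^{-1}$: the factor projections $\mathcal C(T_1,T_2)\to T_i$ are $1$-Lipschitz, so a Gromov–Hausdorff convergence of cores pushes forward to Gromov–Hausdorff convergence of the factor trees, hence (by Skora / continuity of $T_\lambda\mapsto\lambda$) to convergence of the laminations. I expect the main obstacle to be the continuity of $\Phi$ across the locus where the subsurface decomposition of Lemma \ref{lm:decomposition_pair_laminations} changes discontinuously — verifying that the two formulas in \eqref{eq:F} glue continuously in families, not just for a fixed pair — and the careful bookkeeping of the augmented-core construction of Proposition \ref{prop:augmented} when passing to a limit in which new common simplicial sub-trees appear.
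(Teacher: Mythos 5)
Your overall route---parametrizing cores by ordered pairs of laminations through their dual trees---is the same as the paper's, which disposes of the proposition in one line: a core is uniquely determined by its two factor trees, and trees dual to measured laminations correspond homeomorphically to $\mathcal{ML}(S)$; since an element of $\Core(\mathcal T,\mathcal T)$ is by definition the core of a specific product $T_1\times T_2$, it carries the data of its factors and injectivity is essentially definitional. The genuine gap in your write-up is the injectivity step, where you claim that the ordered pair $(T_1,T_2)$ can be reconstructed intrinsically from the core as a $\pi_1(S)$-equivariant metric space $(\mathcal C,d_{\mathcal C})$. That claim is false. First, the swap $(x,y)\mapsto(y,x)$ is an equivariant isometry $\mathcal C(T_1,T_2)\to\mathcal C(T_2,T_1)$ for the $\ell^2$ product metric, hence for the induced path metrics, so at best the unordered pair could be recovered (on the $2$-dimensional pieces the flat metric does not remember which of the two transverse foliations is the first factor). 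Second, and more seriously, on $1$-dimensional cores the factors are not recoverable at all: take $\lambda$ a minimal filling lamination and $(\lambda_1,\lambda_2)=(3\lambda,4\lambda)$. Then the whole surface is of type $i)$, the core is the image of $(c_1\times c_2)\circ p_0$ as in \eqref{eq:F}, and the path metric rescales every transverse increment by $\sqrt{3^2+4^2}=5$; so $(\mathcal C,d_{\mathcal C})$ is equivariantly isometric to the tree dual to $5\lambda$, which is also the core (with its path metric) associated to $(4\lambda,3\lambda)$, $(5\lambda,0)$ and $(0,5\lambda)$. Your proposed intrinsic characterization of the fibers of the factor projections has nothing to bite on here, since those fibers are points; compare Lemma \ref{lm:1-dim core}, which already shows that on such pieces only a single combined lamination is visible.

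So injectivity cannot be established the way you propose; it holds only under the paper's reading that a point of $\Core(\mathcal T,\mathcal T)$ remembers the product it lives in, and with that reading your reconstruction machinery is unnecessary. Relatedly, your continuity argument is only a sketch (you yourself flag the jumping of the decomposition of Lemma \ref{lm:decomposition_pair_laminations} as an unresolved obstacle); with the paper's reading the topology on $\Core(\mathcal T,\mathcal T)$ is the one transported from pairs of dual trees, and the delicate comparison with the equivariant Gromov--Hausdorff topology used for the compactification is handled elsewhere (via Theorem 6.13 of the second author's thesis), not in this proposition.
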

\begin{proof} Since the core of a product of trees is uniquely determined by the two factors, the result follows immediately from the homeomorphism between the space of trees dual to measured laminations and $\mathcal{ML}(S)$.
\end{proof}

We note that there is a natural $\R^{+}$-action on $\Core(\mathcal{T}, \mathcal{T})$ given by rescaling the induced metric on the core, which, under the homeomorphism above, corresponds to the diagonal action of $\R^{+}$ by scalar multiplication on the measures. We denote by $\Pp\Core(\mathcal{T}, \mathcal{T})$ the quotient $\Core(\mathcal{T}, \mathcal{T})/\R^{+}$. It follows that $\Pp\Core(\mathcal{T}, \mathcal{T})$ is homeomorphic to $\Pp(\mathcal{ML}(S) \times \mathcal{ML}(S))$. In particular, it is topologically a sphere of dimension $12g-13$.

\section{Thurston's compactification}\label{sec:Thurston}

Recall that we denote by $\text{Max}(S)$
the space of conjugacy classes of representations $\rho=(\rho_1,\rho_2)$ of the fundamental group of a closed connected oriented surface $S$ of negative Euler characteristic into the Lie group $\dPSL$ such that $e(\rho_1)+e(\rho_2)=4g-4$. Here, $e$ denotes the Euler number of the representation. It follows from \cite{Goldman_topcomp} that $\rho_1$ and $\rho_2$ are both Fuchsian representations. Therefore,  as $\text{Max}(S)$ may be thought of as the product of two copies of Teichm{\"u}ller space, it is homeomorphic to an open cell of dimension $12g-12$.

The main goal of this section is to prove Theorem \ref{thm:closedballIntro} from the introduction, which we restate below for the convenience of the reader. 
\begin{thm}\label{thm:closedball} The disjoint union
\[
\mathfrak B=\mathrm{Max}(S)\sqcup \mathbb{P}\mathrm{Core}(\mathcal T,\mathcal T)
\]
is homeomorphic to a closed ball of dimension $12g-12$.

\end{thm}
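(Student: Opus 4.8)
The plan is to build the homeomorphism $\mathfrak B \cong \overline{B^{12g-12}}$ out of two already-identified pieces: the interior $\mathrm{Max}(S)$, which is an open cell of dimension $12g-12$ (two copies of Teichmüller space), and the boundary $\mathbb P\mathrm{Core}(\mathcal T,\mathcal T)$, which by Proposition \ref{prop:core=productlaminations} and the subsequent discussion is a sphere of dimension $12g-13$. The natural strategy, following Thurston's original argument and the harmonic-maps approach of Wolf, is to exhibit a single global chart. Concretely, fix a base Riemann surface $(S,J_0)$ (or a pair $(J_1,J_2)$) and use a parametrization of $\mathrm{Max}(S)$ by a vector space: for instance, via the Gardiner--Masur/Hubbard--Masur correspondence one identifies $\mathrm{Max}(S) = \mathrm{Teich}(S)\times\mathrm{Teich}(S)$ with $H^0(S,J_1,K^2)\times H^0(S,J_2,K^2) \cong \R^{6g-6}\times\R^{6g-6}$, or — more in the spirit of the minimal-Lagrangian picture of \cite{Charles_dPSL} — one uses the parametrization of $\mathrm{Max}(S)$ by a holomorphic quadratic differential on a fixed conformal structure coming from Schoen's minimal Lagrangian together with its induced metric. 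Either way one gets a homeomorphism $\Psi \colon \mathrm{Max}(S) \to V \setminus \{0\}\ \sqcup\ \{0\}$ onto an open ball $V\cong \R^{12g-12}$.

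Next I would radially compactify $V$: the open ball $V$ is homeomorphic to $\overline{B}\setminus \partial\overline{B}$ by the map $v\mapsto v/(1+\|v\|)$, so it remains to check that the ``visual'' boundary directions of $V$, i.e. limits of $\Psi^{-1}(t v)$ as $t\to\infty$ with $v$ fixed, converge in the equivariant Gromov--Hausdorff (length-spectrum) topology to points of $\mathbb P\mathrm{Core}(\mathcal T,\mathcal T)$, and that this boundary assignment is a homeomorphism onto $\mathbb P\mathrm{Core}(\mathcal T,\mathcal T)\cong S^{12g-13}$. The key analytic input here is exactly the degeneration result of \cite[Theorem 5.5]{Charles_dPSL}: rescaled families of minimal Lagrangians (equivalently, rescaled induced length spectra) subconverge to cores of products of $\R$-trees dual to measured laminations, and the limiting lamination pair is read off from the projective class of the quadratic differentials / foliation data. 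Combining this with Proposition \ref{prop:core=productlaminations}, which says $\mathrm{Core}(\mathcal T,\mathcal T)\cong\mathcal{ML}(S)\times\mathcal{ML}(S)$, gives a candidate boundary map $\partial V \to \mathbb P(\mathcal{ML}(S)\times\mathcal{ML}(S))$; one then argues it is a continuous bijection between spheres of the same finite dimension, hence a homeomorphism, and that it glues continuously with $\Psi^{-1}$ to give a homeomorphism $\overline B \to \mathfrak B$.

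The main obstacle, and the step that needs genuine work rather than bookkeeping, is \emph{continuity of the limit map up to and along the boundary} — that is, showing the two topologies glue: a sequence $\rho_n\in\mathrm{Max}(S)$ with $\Psi(\rho_n)\to w\in\partial V$ must have its rescaled length spectra converge in the equivariant Gromov--Hausdorff sense to the core determined by $w$, \emph{uniformly enough} that no escape of mass or jumping between different cores occurs (the phenomenon of Proposition \ref{prop:augmented}, where the core must be augmented to a canonical connected extension, is exactly where this is delicate, since nearby directions in $\partial V$ can correspond to lamination pairs with common isolated leaves). I would handle this by working with length functions directly: the induced-metric length spectrum $\ell_{\rho_n}/\|\cdot\|$ is a continuous function on $\mathrm{Max}(S)$ with values in $\mathbb P(\R^{\mathcal C})$ ($\mathcal C$ the set of conjugacy classes), it extends continuously to $\mathbb P\mathrm{Core}(\mathcal T,\mathcal T)$ because the core's path-metric length spectrum is $\ell_1+\ell_2$ of the factor trees (Lemma \ref{lm:1-dim core} gives exactly this on the $1$-dimensional pieces, and on the $2$-dimensional pieces it is the half-translation length spectrum), and properness of the resulting map $\mathfrak B\to\mathbb P(\R^{\mathcal C})$ together with invariance of domain upgrades the continuous bijection to a homeomorphism onto its image, which one identifies with the standard closed ball. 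The remaining points — that $\Psi$ is proper, that the boundary map is injective (distinct projective lamination pairs have distinct rescaled core length spectra, by the analogue of Thurston's rigidity of $\mathbb P\mathcal{ML}$), and that the dimensions $12g-12$ and $12g-13$ match up — are then routine given the structural results recalled in Sections \ref{sec:cores} and \ref{sec:Thurston}.
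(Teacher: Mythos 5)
Your overall strategy coincides with the paper's: fix a global vector-space chart for $\mathrm{Max}(S)\cong\mathrm{Teich}(S)\times\mathrm{Teich}(S)$, radially compactify it (the paper's map $\beta$ of Lemma \ref{lemma:inj} onto $\mathrm{BPQD}(X)$ is exactly your $v\mapsto v/(1+\|v\|)$ step), and attach $\mathbb{P}\mathrm{Core}(\mathcal T,\mathcal T)\cong\mathbb{P}(\mathcal{ML}(S)\times\mathcal{ML}(S))$ as the sphere of radial directions. However, the step you yourself identify as the crux --- gluing the interior and boundary topologies --- is handled by an argument that does not work. You propose to map $\mathfrak B$ to $\mathbb{P}(\mathbb{R}^{\mathcal C})$ via the induced-metric length spectrum on the interior and the path-metric length spectrum of the core on the boundary, and then invoke properness and invariance of domain for a ``continuous bijection onto its image.'' That map is not injective: the induced metric only remembers the class in $\mathrm{Ind}(S)=\mathrm{Max}(S)/S^{1}$ (for instance, swapping the two Fuchsian factors, or rotating the quadratic differential of the minimal Lagrangian by $e^{i\theta}$, leaves the induced metric unchanged), and correspondingly the core's path metric only recovers the associated mixed structure, so the boundary map $\mathbb{P}\mathrm{Core}(\mathcal T,\mathcal T)\to\mathbb{P}\mathrm{Mix}(S)$ also collapses. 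In particular the image of the $(12g-12)$-dimensional interior is only $(12g-13)$-dimensional, so invariance of domain cannot apply and no bijection is available; this is precisely the difference between $\mathfrak B$ and $\overline{\mathrm{Ind}(S)}$ that the paper exploits later when studying the mapping class group action.

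The paper avoids this by never passing to the single induced/path metric: it works throughout in Wolf's Hopf-differential coordinates $\Phi(\rho_1,\rho_2)=(q_{\rho_1},q_{\rho_2})$ on a fixed Riemann surface $X$ \cite{Wolf_harmonic}, and proves directly that the attaching map $\psi$ is well defined (independence of the approximating sequence, with the scale ambiguity $k$ eliminated by the unit-norm normalization), continuous, bijective, and has continuous inverse, the analytic inputs being the convergence of rescaled harmonic maps to projections onto leaf spaces of the limiting horizontal foliations \cite[Corollary 5.2]{wolf1995harmonic} and the Hubbard--Masur theorem \cite{hubbard1979quadratic}. If you want a length-spectrum formulation you must use the \emph{pair} of spectra $(\ell_{\rho_1},\ell_{\rho_2})$ jointly projectivized (equivalently, invoke Theorem 6.13 of \cite{Charles_dPSL}, as the paper does in its first reduction), not the induced-metric spectrum. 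A smaller inaccuracy: your chart ``via Gardiner--Masur/Hubbard--Masur'' is misattributed, since Hubbard--Masur parametrizes $\mathcal{MF}(S)$, not $\mathrm{Teich}(S)$, by $\mathrm{QD}(X)$, and your alternative chart based on the conformal structure of the minimal Lagrangian is not a fixed-base vector-space chart; the parametrization that works, and that also controls the limits along rays, is Wolf's harmonic-maps one.
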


We begin by recalling the topology placed on $\mathfrak B$. The maximal component Max$(S)$ is naturally homeomorphic to the product of two copies of Teichm{\"u}ller space. This in turn, by the result of Schoen is homeomorphic to the space of equivariant minimal Lagrangians in $\mathbb{H}^{2} \times \mathbb{H}^{2}$. Under the Gromov-Hausdorff topology, diverging sequences of minimal Lagrangians subconverge to the (projective) core of a product of two trees (\cite[Theorem 8.1]{Charles_dPSL}). The two trees are dual to a pair of measured laminations, and the topology on $\mathfrak B$ is compatible with the Thurston compactification on Teich$(S) \times$ Teich$(S)$ in the following way: if $(\rho_{1,n}, \rho_{2,n}) \to [\lambda_{1}, \lambda_{2}]$, then the associated minimal Lagrangians converge to the core of $T_{1} \times T_{2}$, where $T_{i}$ is dual to $\lambda_{i}$.

Fix a complex structure $J$ on $S$ and denote by $X$ the Riemann surface $(S,J)$. Then for any hyperbolic metric $h \in \text{Teich}(S)$  there is \cite{eells} a unique \cite{hartman1967homotopic} harmonic map $w_h \colon X \to (S,h)$ in the homotopy class of the identity. Harmonicity of $w_h$ ensures that the Hopf differential $q_h=\left(w_h^{*}h\right)^{(2,0)}$ is a holomorphic quadratic differential on $X$. The vector space $\text{QD}(X)$ of holomorphic quadratic differentials on $X$ has a natural norm given by the $L^{2}$-norm with respect to the uniformizing hyperbolic metric $\sigma$ of $X$. With an abuse of notation, we will still denote by $X$ the hyperbolic surface $(S,\sigma)$. The map which assigns to a point in Teichm{\"u}ller space its corresponding Hopf differential is a homeomorphism \cite{Wolf_harmonic}. 

\begin{proof}[Proof of Theorem \ref{thm:closedball}] 
By Theorem 6.13 of \cite{Charles_dPSL}, the space $\text{Max}(S) \sqcup \Pp\mathrm{Core}(\mathcal T,\mathcal T)$ is naturally homeomorphic to $\text{Teich}(S) \times \text{Teich}(S) \sqcup \mathbb{P}(\mathcal{ML}(S) \times \mathcal{ML}(S))$, so it suffices to prove the latter is homeomorphic to a closed ball of dimension $12g-12$.

As $\mathbb P(\mathcal{ML}(S) \times \mathcal{ML}(S))$ is homeomorphic to a sphere of dimension $12g-13$, the remainder of the proof consists of describing how to attach this topological space to the open cell $\text{Teich}(S) \times \text{Teich}(S)$ to obtain a closed ball.

We start by fixing a complex structure $J$ on $S$. Let $X=(S,J)$ be the resulting Riemann surface. By the Wolf parameterization \cite{Wolf_harmonic}
\[
\text{Teich}(S) \times \text{Teich}(S) \cong \mathrm{QD}(X)\oplus \mathrm{QD}(X)
\] 
via the map $\Phi(\rho_{1}, \rho_{2}) = (q_{\rho_{1}}, q_{\rho_{2}})$.
We equip $\mathrm{QD}(X)\oplus \mathrm{QD}(X)$ with the norm
\[                  
\|q\|=\max\left(\|q_1\|,\|q_2\|\right),                          
\]
and consider                                
\[
\mathrm{BPQD}(X)=\{q=(q_1,q_2)\colon \|q\|<1\}.
\]
which is, topologically, a ball of dimension $12g-12$.
We will need the following lemma.
\begin{lemma}\label{lemma:inj} The map
\begin{align*}
\beta\colon  \mathrm{QD}(X)\oplus \mathrm{QD}(X)&\to \mathrm{BPQD}(X)\\
q=(q_1,q_2)&\mapsto \frac{4q}{1+4\|q\|}
\end{align*}
is continuous, injective, and proper. Hence $\beta$ is a homeomorphism. 
\end{lemma}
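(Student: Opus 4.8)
The plan is to verify directly that $\beta$ lands in $\mathrm{BPQD}(X)$ and is continuous, then establish injectivity and properness by elementary one-variable arguments, and finally upgrade these three properties to the homeomorphism statement. Write $V=\mathrm{QD}(X)\oplus\mathrm{QD}(X)$, a real vector space of dimension $12g-12$ with the norm $\|q\|=\max(\|q_1\|,\|q_2\|)$. Since $1+4\|q\|\geq 1>0$ for every $q$, the map $\beta$ is the composition of the continuous maps $q\mapsto(q,\|q\|)$ and $(q,t)\mapsto 4q/(1+4t)$, hence continuous; and $\|\beta(q)\|=4\|q\|/(1+4\|q\|)<1$ for all $q$, so indeed $\beta(V)\subseteq\mathrm{BPQD}(X)$.

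For injectivity I would use that $\beta$ is \emph{radial}: $\beta(q)$ is always a positive scalar multiple of $q$. Taking norms in the equation $\beta(q)=\beta(q')$ gives $g(\|q\|)=g(\|q'\|)$ with $g(t)=4t/(1+4t)$; since $g'(t)=4/(1+4t)^2>0$, the function $g$ is strictly increasing on $[0,\infty)$, so $\|q\|=\|q'\|$, and then the defining formula forces $q=q'$. For properness the same function controls everything: $\|\beta(q)\|=g(\|q\|)\to 1$ as $\|q\|\to\infty$. A compact $K\subseteq\mathrm{BPQD}(X)$ lies in some closed ball $\{\|p\|\leq r\}$ with $r<1$, so $\beta^{-1}(K)\subseteq\{\|q\|\leq g^{-1}(r)\}$, which is a closed bounded subset of the finite-dimensional space $V$, hence compact; being also closed as the preimage of a closed set under a continuous map, $\beta^{-1}(K)$ is compact.

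Finally, to conclude that $\beta$ is a homeomorphism onto $\mathrm{BPQD}(X)$ I see two equivalent routes. Abstractly: a proper continuous map into a locally compact Hausdorff space is closed, so $\beta(V)$ is closed; by invariance of domain (both $V$ and $\mathrm{BPQD}(X)$ are manifolds of dimension $12g-12$) a continuous injection is open, so $\beta(V)$ is also open; since $\mathrm{BPQD}(X)$ is connected and $\beta(V)$ is nonempty, $\beta(V)=\mathrm{BPQD}(X)$, and a continuous closed bijection is a homeomorphism. Concretely, and perhaps more transparently, one can solve $\beta(q)=p$ radially: writing $q=sp$ and comparing norms yields $s=1/\big(4(1-\|p\|)\big)$, hence $\beta^{-1}(p)=p/\big(4(1-\|p\|)\big)$, which is manifestly continuous on $\{\|p\|<1\}$ and simultaneously reproves bijectivity. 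I do not anticipate a genuine obstacle here; the only point deserving a word of care is that the implication ``continuous $+$ injective $+$ proper $\Rightarrow$ homeomorphism'' uses that the two spaces have equal dimension, which holds — and in any case the explicit formula for $\beta^{-1}$ sidesteps invariance of domain entirely.
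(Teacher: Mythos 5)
Your proof is correct and follows essentially the same approach as the paper: injectivity via the radial nature of $\beta$ together with a one-variable scaling argument, with continuity and properness checked directly (the paper leaves these to inspection). Your explicit formula $\beta^{-1}(p)=p/\bigl(4(1-\|p\|)\bigr)$ is a nice touch that makes the surjectivity and the homeomorphism conclusion fully explicit, where the paper relies on the implicit ``continuous, injective, proper'' argument.
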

\begin{proof}
Suppose $\beta(q_{1},q_{2}) = \beta(\phi_{1}, \phi_{2})$. It follows then that $q_{1} = k \phi_{1}$ and $q_{2} = k \phi_{2}$ for some $k \in \mathbb{R}$. Writing out  $\beta(q_{1},q_{2}) = \beta(q_{1}/k, q_{2}/k)$, basic algebra shows $k=1$. Continuity and properness follow by inspection.
\end{proof}



We will now describe the attaching map. Consider the map 
\[
\psi: \text{Teich}(S) \times \text{Teich}(S) \sqcup \mathbb{P}(\mathcal{MF}(S) \times \mathcal{MF}(S)) \to \overline{\text{BPQD}(X)}
\]
defined by 
\[
    \psi(x)=\begin{cases}
             \ \ \beta(\Phi(x))  \ \ \   \ \ \ \ \ \  \ \ \ \ \ \text{if }x\in \text{Teich}(S)\times \text{Teich}(S)\\
            \lim\limits_{n \to + \infty} \beta(\Phi(x_{n})) \ \ \ \ \ \ \text{if } x\in \mathbb{P}(\mathcal{MF}(S) \times \mathcal{MF}(S))\text{ and } x_n\to x
            \end{cases}
\]
We show first that the map $\psi$ is well-defined. Suppose $x_{n} =(X_{1,n}, X_{2,n}) \to x$ and $x'_{n} =(X'_{1,n}, X'_{2,n}) \to x$, where $x = [\lambda_{1}, \lambda_{2}] \in \mathbb{P}(\mathcal{MF}(S) \times \mathcal{MF}(S))$. That is to say, there exists sequences of real numbers $c_{n}, d_{n}$ for which the rescaled hyperbolic surfaces $\widetilde{X}_{i,n}/c_{n}$ and $\widetilde{X'}_{i,n}/d_{n}$ converge to $\mathbb{R}$-trees $T_{i}, T'_{i}$ dual to laminations $\lambda_{i}$ and $\lambda_{i}'$ such that $[(\lambda_{1}, \lambda_{2})]=[(\lambda_{1}',\lambda_{2}')]$. By \cite{Wolf_harmonic}, the sequences $c_{n}$ and $d_{n}$ can be taken to be $\|\Phi(x_{n})\|$ and $\| \Phi(x_{n}')\|$. Note that, a priori, $\lambda_{i}'=k\lambda_{i}$ for some $k>0$. With such rescaling, the harmonic maps $h_{i,n}: \widetilde{X} \to \widetilde{X}_{i,n}/c_{n}$ converge to the harmonic map $h_{i}: \widetilde{X} \to T_{i}$ given by projection onto the leaf space of the measured foliation $\widetilde{\mathcal{F}}_{i}$ corresponding to $\widetilde{\lambda}_{i}$ \cite[Corollary 5.2]{wolf1995harmonic}. Moreover the sequence of Hopf differentials $q_{i,n}$ of $h_{i,n}$ converges to the Hopf differential $q_{i}$ of $h_{i}$ (here take the quotient so that $q_{i}$ is a holomorphic quadratic differential on $X$ and not $\widetilde{X}$). Finally, the differential $q_{i}$ is the unique holomorphic quadratic differential on $X$ whose horizontal foliation is Whitehead equivalent to $\mathcal{F}_{i}$. Likewise the sequence of harmonic maps $h'_{i,n}: \widetilde{X} \to \widetilde{X'}_{i,n}/{d_{n}}$ converges to the harmonic map $h': \widetilde{X} \to T'_{i}$, whose Hopf differential is $q'_{i}$ is the limit of the Hopf differentials $q_{i,n}'$ of $h'_{i,n}$ and has horizontal foliation $\mathcal{F}'_{i}$ corresponding to the lamination $\lambda_{i}'$. Notice, in addition, that $(q_{1,n},q_{2,n})=\Phi(x_{n})/\| \Phi(x_{n})\|$ and similarly  $(q_{1,n}',q_{2,n}')=\Phi(x_{n}')/ \|\Phi(x_{n}')\|$. It follows that the limits of $\beta(\Phi(x_{n}))$ and $\beta(\Phi(x_{n}'))$ as $n \to +\infty$ exist and coincide with $(q_{1}, q_{2})$ and $(q_{1}', q_{2}')$. As the distance functions $d_{i}$ and $d_{i}'$ on $T_{i}$ and $T_{i}'$ satisfy $d_{i} = k \cdot d_{i}'$, then, by homogeneity of the Hopf differential, one has $q_{i} = k \cdot q'_{i}$. Since the pairs $(q_{1}, q_{2})$ and $(q_{1}', q_{2}')$ both have unit norm, we conclude that $k=1$ and the limits of $\beta(\Phi(x_{n}))$ and $\beta(\Phi(x_{n}'))$ as $n \to +\infty$ are equal. 

Continuity follows almost immediately: the map $\beta\circ\Phi$ is continuous on the interior and extends continuously to the boundary by a diagonal argument. Indeed, we can approximate a sequence along the boundary by sequences in the interior.

Bijectivity of $\psi$ on the interior also follows by \cite{Wolf_harmonic} and Lemma \ref{lemma:inj}. On the boundary, given $q=(q_{1}, q_{2})$ with $\|q\|=1$, if $X_{i,t}$ is the hyperbolic surface corresponding to the rays $tq_{i}$ in Wolf's parameterization of $\mathrm{Teich}(S)$, we have that $\beta(\Phi(X_{1,t}, X_{2,t})) \to q$ as $t\to \infty$, thus $\psi$ is surjective on the boundary. Since the limit of $\beta(\Phi(x_{n}))$ along diverging sequences in $x_{n}\in \mathrm{Teich}(S)\times\mathrm{Teich}(S)$ only depends on the projective class of the limit of $x_{n}$ and not on the particular sequence, we deduce that $\psi$ is injective on the boundary, because every point in $\Pp(\mathcal{ML}(S)\times\mathcal{ML}(S))$ can be obtained as a limit along a ray defined above and the limit of $\beta\circ \Phi$ along distinct rays is different.

It remains to prove $\psi^{-1}$ is continuous. We can actually write the inverse explicitly:
\[
    \psi^{-1}(q_{1},q_{2})= \begin{cases}
                    \Phi^{-1}(\beta^{-1}(q_{1}, q_{2})) \ \ \ \ \ \ \ \ \ \ \text{if $\|(q_{1},q_{2})\|<1$} \\
                    [\lambda_{1}, \lambda_{2}] \ \ \ \ \ \ \ \ \ \ \ \ \ \ \ \ \ \ \ \ \ \ \text{if $\|(q_{1},q_{2})\|=1$} 
                    \end{cases}
\]
where $\lambda_{i}$ is the measured lamination corresponding to the horizontal foliation of $q_{i}$. Continuity of $\psi^{-1}$ on $\mathrm{BPQD}(X)$ is then a consequence of Lemma \ref{lemma:inj} and Wolf's parameterization. Continuity on the boundary follows from Hubbard-Masur theorem \cite{hubbard1979quadratic}. In general, if $q_{n}=(q_{1,n}, q_{2,n}) \in \mathrm{BPQD}(X)$ converges to $(q_{1}, q_{2}) \in \partial\overline{\mathrm{BPQD}(X)}$, then there is a sequence of scaling factors $c_{n}$ such that the pair of hyperbolic surfaces $x_{n}=\psi^{-1}(q_{1,n}, q_{2,n})$ rescaled by $c_{n}$ converge to real trees $T_{1}, T_{2}$ dual to measured laminations $\lambda_{1}, \lambda_{2}$. We need to show that $\psi^{-1}(q_{1},q_{2})$ is equal to $[\lambda_{1}, \lambda_{2}]$. Assume not, then we would have, by injectivity and continuity of $\psi$, 
\begin{align*}
    (q_{1},q_{2})& = \psi(\psi^{-1}(q_{1},q_{2})) \neq \psi([\lambda_{1}, \lambda_{2}]) = \lim_{n\to +\infty}\psi(x_{n})=\lim_{n\to +\infty}(q_{1,n}, q_{2,n})
\end{align*}
which contradicts the assumption on $(q_{1,n}, q_{2,n})$. 

Finally, we remark the compactification in \cite{Charles_dPSL} is independent of the choice of a base point, so that the role of the base point $(S,J)$ is merely an auxiliary one. This completes the proof of the theorem.
\end{proof}


\section{Fixed point for the mapping class group action}

In this section, we study the action of the mapping class group $\mathrm{MCG}(S)$ on the compactification $\mathfrak B=\overline{\mathrm{Max}(S)}$ constructed in Theorem \ref{thm:closedball}. We wish to study the fixed points of this action. We will need the following observations.
\begin{lemma}\label{lem:MCGextends} The action of the mapping class group on $\mathrm{Max}(S)$ extends continuously to the closure $\mathfrak B=\overline{\text{Max}(S)}$. 
\end{lemma}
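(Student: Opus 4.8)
The statement to prove is Lemma~\ref{lem:MCGextends}: the $\mathrm{MCG}(S)$-action on $\mathrm{Max}(S)$ extends continuously to $\mathfrak B = \overline{\mathrm{Max}(S)}$.

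The plan is to exploit the fact that $\mathfrak B$ has been identified, via the homeomorphism $\psi$ constructed in the proof of Theorem~\ref{thm:closedball}, with $\text{Teich}(S)\times\text{Teich}(S)\sqcup\Pp(\mathcal{MF}(S)\times\mathcal{MF}(S))$, which in turn is the product of two copies of the Thurston compactification $\overline{\text{Teich}(S)}$ modulo the diagonal $\R^+$-action on the boundary. The key point is that the $\mathrm{MCG}(S)$-action on $\mathfrak B$ is, under this identification, just the \emph{diagonal} action coming from the classical continuous action of $\mathrm{MCG}(S)$ on Thurston's compactification of Teichm\"uller space (see \cite{fathi2021thurston}). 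So the strategy is: (1) recall that $\phi\in\mathrm{MCG}(S)$ acts on $\mathrm{Max}(S)=\text{Teich}(S)\times\text{Teich}(S)$ by $\phi\cdot(\rho_1,\rho_2)=(\phi\cdot\rho_1,\phi\cdot\rho_2)$; (2) recall that the Thurston compactification is $\mathrm{MCG}(S)$-equivariantly homeomorphic to a closed ball with $\mathcal{PMF}(S)$ as boundary, and $\phi$ acts continuously there, sending a measured foliation $\mathcal{F}$ to $\phi_*\mathcal{F}$ with $i(\phi_*\mathcal F,\phi_*\mathcal G)=i(\mathcal F,\mathcal G)$; (3) conclude that the diagonal action on the product compactification, and hence its quotient giving $\Pp(\mathcal{MF}(S)\times\mathcal{MF}(S))$ on the boundary, is continuous, because a product of continuous maps is continuous and passing to the quotient by a group action commuting with $\mathrm{MCG}(S)$ preserves continuity.

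In more detail, I would argue as follows. On the interior, the formula $\phi\cdot(\rho_1,\rho_2)=(\phi\circ\rho_1,\phi\circ\rho_2)$ (precomposition, up to conjugacy) is manifestly continuous since each factor is the standard $\mathrm{MCG}(S)$-action on $\text{Teich}(S)$. On the boundary $\Pp\mathrm{Core}(\mathcal T,\mathcal T)\cong\Pp(\mathcal{ML}(S)\times\mathcal{ML}(S))$, one has by functoriality of the dual-tree construction that if $T_i$ is the $\R$-tree dual to $\lambda_i$ with its $\pi_1(S)$-action precomposed by $\phi_*$, then $T_i$ with the twisted action is equivariantly isometric to the tree dual to $\phi_*\lambda_i$; consequently $\phi$ sends the core $\mathcal C(T_1,T_2)$ to $\mathcal C(T_{\phi_*\lambda_1},T_{\phi_*\lambda_2})$, i.e.\ $\phi\cdot[\lambda_1,\lambda_2]=[\phi_*\lambda_1,\phi_*\lambda_2]$, which depends continuously on $[\lambda_1,\lambda_2]$ because $\phi_*$ acts continuously (in fact homeomorphically) on $\mathcal{ML}(S)$ and hence on $\Pp(\mathcal{ML}(S)\times\mathcal{ML}(S))$. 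For continuity across the boundary (the compatibility of the interior and boundary actions at a boundary point), I would use that the compactification topology is the one transported by $\psi$ from $\overline{\text{BPQD}(X)}$, and note that $\phi$ does \emph{not} fix the base Riemann surface $X$, so the cleanest route is: the compactification of \cite{Charles_dPSL} is base-point independent (as recalled at the end of the proof of Theorem~\ref{thm:closedball}), and under the equivariant Gromov--Hausdorff / length-spectrum description of $\mathfrak B$ the action of $\phi$ is simply relabeling of group elements $\gamma\mapsto\phi_*\gamma$, which is a homeomorphism of the space of (projectivized) length functions and restricts to the known continuous actions on both strata; hence it is continuous on all of $\mathfrak B$. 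Since $\phi$ is invertible with $\phi^{-1}$ acting the same way, the extension is a homeomorphism.

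The main obstacle is the last point: verifying continuity of the extended map \emph{at} boundary points, i.e.\ that if $x_n\to x\in\partial\mathfrak B$ then $\phi\cdot x_n\to\phi\cdot x$, mixing an interior sequence with a boundary limit. The delicacy is that $\phi$ moves the auxiliary complex structure $X$ used to build the chart $\psi$, so one cannot simply read off continuity from Wolf's parametrization in a fixed chart. The clean way around this is to phrase everything length-spectrum-theoretically: $\mathfrak B$ embeds (equivariantly) into a space of projectivized marked length spectra / geodesic currents with the weak-$*$ topology, on which the $\mathrm{MCG}(S)$-action by $\gamma\mapsto\phi_*\gamma$ is visibly continuous (it is the restriction of a linear, hence continuous, action on the space of currents, and it commutes with projectivization); then invoke that the topology on $\mathfrak B$ agrees with the subspace topology from this current/length-spectrum space on both strata (the interior by \cite{Otal}/\cite{Wolf_harmonic}, the boundary by \cite[Theorem 5.5, Theorem 6.13]{Charles_dPSL}). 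Once the topology is recognized as a subspace topology for a continuous ambient action, continuity of the extension is automatic, which is why I would centre the write-up on that identification rather than on explicit estimates in the $\psi$-chart.
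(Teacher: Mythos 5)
The paper itself gives no argument for this lemma: it is stated as an observation, the introduction only remarking that continuity of the extension ``is not too difficult to see from the construction,'' i.e.\ from the fact that the topology on $\mathfrak B$ is the equivariant Gromov--Hausdorff (equivalently, length-spectrum) topology, in which the $\mathrm{MCG}(S)$-action is just relabeling of the $\pi_1(S)$-action by $\phi_*$. Your final argument is exactly this implicit reasoning, and it is correct: on the interior the action is the diagonal action on $\mathrm{Teich}(S)\times\mathrm{Teich}(S)$; on the boundary, functoriality of the dual-tree construction gives $\phi\cdot[\lambda_1,\lambda_2]=[\phi_*\lambda_1,\phi_*\lambda_2]$; and continuity across the boundary follows once the topology on $\mathfrak B$ is described intrinsically (equivariant GH convergence of minimal Lagrangians/cores, or equivalently convergence of projectivized $\R^2$-valued length functions), since relabeling by $\phi_*$ manifestly commutes with taking such limits and the auxiliary base point $X$ plays no role---a subtlety you correctly isolate, because $\phi$ does move the chart $\psi$.

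One caveat: your opening framing is wrong and, had you relied on it, would be a genuine gap. The compactification $\mathfrak B$ is \emph{not} ``the product of two copies of the Thurston compactification modulo the diagonal $\R^+$-action on the boundary,'' and step (3) of your initial plan (product of continuous maps, then pass to a quotient) does not go through: the boundary $\Pp(\mathcal{ML}(S)\times\mathcal{ML}(S))$ remembers the relative scale of $\lambda_1$ and $\lambda_2$ and contains points with one factor equal to $0$, data that cannot be extracted from $\overline{\mathrm{Teich}(S)}\times\overline{\mathrm{Teich}(S)}$, whose boundary is $(\overline{\mathrm{Teich}}\times\mathcal{PMF})\cup(\mathcal{PMF}\times\overline{\mathrm{Teich}})$; indeed a sequence $(X_{1,n},X_{2,n})$ can converge in $\mathfrak B$ to $[\lambda_1,0]$ while $X_{2,n}$ converges in Teichm\"uller space, or the two factors can degenerate at very different rates. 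Since you abandon that route and base the actual proof on the intrinsic length-spectrum/equivariant-GH description (together with base-point independence of the construction in \cite{Charles_dPSL}), the proof as finally written is sound; just make sure the write-up states precisely which result of \cite{Charles_dPSL} identifies the topology of $\mathfrak B$ with the one induced from projectivized vector-valued length functions (or argue directly in the equivariant GH topology, where continuity of relabeling is immediate), and drop the erroneous ``quotient of the product compactification'' description.
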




\begin{cor} For every $\phi\in\mathrm{MCG}(S)$, there exists $x\in\mathfrak B$ such that $\phi(x)=x$.
\end{cor}

The first main goal of this section is to analyze these fixed points via the celebrated Nielsen-Thurston classification, which we recall for future reference.

\begin{thm}[{Nielsen-Thurston classification, see \cite[Chapter 13]{farb2011primer}}]\label{thm:thurstonclass} Any diffeomorphism $\phi$ on $S$ is isotopic to a map $\phi'$ satisfying one of the following mutually exclusive conditions:
\begin{enumerate}
    \item{\em(periodic)} $\phi'$ is of finite order;
    \item{\em(reducible)} $\phi'$ is not periodic, and there is a nonempty set $\{c_1,\dots, c_r\}$ of isotopy classes of essential pairwise disjoint simple closed curves in $S$ such that $\{\phi'(c_i)\}_{i=1}^r=\{c_i\}_{i=1}^r$;
    \item{\em(pseudo-Anosov/pA)} there exists $\lambda>1$ and two transverse measured foliations $\mathcal F$ and $\mathcal F'$ such that
    \[
    \phi'(\mathcal F)=\lambda\mathcal F\text{ and } \phi'(\mathcal F')=\frac{1}{\lambda}\mathcal F'.
    \]
\end{enumerate}
\end{thm}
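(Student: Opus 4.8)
The plan is to follow Bers' Teichm\"uller-theoretic proof, which fits naturally with the analytic viewpoint used elsewhere in this paper; Thurston's original argument via train tracks, or the algorithmic Bestvina--Handel approach, would also work. Fix a representative diffeomorphism $\phi$, inducing an isometry of $(\mathrm{Teich}(S),d_{\mathrm T})$ for the Teichm\"uller metric $d_{\mathrm T}$, and set
\[
a(\phi)=\inf_{X\in\mathrm{Teich}(S)}d_{\mathrm T}(X,\phi\cdot X)\in[0,\infty).
\]
I would then run the argument according to whether $a(\phi)$ vanishes and whether the infimum is attained, producing the trichotomy in the statement.

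If $a(\phi)=0$ is attained, then $\phi$ fixes a point $X_0\in\mathrm{Teich}(S)$ and hence is isotopic to an isometry of the hyperbolic surface $X_0$; since $g>1$ the isometry group of $X_0$ is finite, so $\phi'$ is periodic. If $a(\phi)>0$ is attained at some $X_0$, I would use Teichm\"uller's theory of extremal quasiconformal maps: the extremal map $X_0\to\phi\cdot X_0$ in the homotopy class prescribed by $\phi$ is a Teichm\"uller map, and uniqueness of extremal maps together with minimality of $a(\phi)$ forces the Teichm\"uller geodesic line $\gamma$ through $X_0$ in the direction of its Hopf differential $q$ to be $\phi$-invariant, with $\phi$ translating $\gamma$ by $a(\phi)$. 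Translation along $\gamma$ scales the horizontal and vertical measured foliations of $q$ by $\lambda:=e^{a(\phi)}>1$ and $\lambda^{-1}$; since the horizontal and vertical foliations of a holomorphic quadratic differential always fill, this realizes $\phi'$ as pseudo-Anosov with dilatation $\lambda$.

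The remaining, and most delicate, case is when the infimum is \emph{not} attained; here the claim is that $\phi$ is reducible (possibly also periodic after reduction). I would pick a minimizing sequence $X_n$ with $d_{\mathrm T}(X_n,\phi\cdot X_n)\to a(\phi)$. By Mumford's compactness criterion, if the projections of $X_n$ to moduli space remained in a compact set the infimum would be attained; hence $\mathrm{sys}(X_n)\to 0$, and for $\varepsilon_0$ a Margulis constant the set $\Gamma_n$ of curves of $X_n$-length $\le\varepsilon_0$ is nonempty with $\#\Gamma_n\le 3g-3$ by the collar lemma. Because $\phi$ moves $X_n$ a bounded Teichm\"uller distance and the Teichm\"uller metric controls ratios of extremal (hence, for short curves, hyperbolic) lengths, the curves in $\phi(\Gamma_n)$ are also short on $X_n$ up to a universal multiplicative constant, so $\Gamma_n$ and $\phi(\Gamma_n)$ have universally bounded geometric intersection and $X_n$-length. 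A pigeonhole and diagonal argument over $n$ then extracts a single essential isotopy class $c$ with $\ell_{X_n}(c)\to 0$ and $i(c,\phi^k(c))=0$ along a subsequence for the relevant small $k$, and closing up the $\phi$-orbit of $c$ produces a $\phi$-invariant multicurve, which is the required reduction system; if after cutting along it every piece is periodic we are in case (1), otherwise in case (2).

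Finally I would verify mutual exclusivity. A periodic map has finite order, so it is neither reducible (case (2) excludes periodic maps by definition) nor pseudo-Anosov, since a pseudo-Anosov has infinite order ($\phi^n(\mathcal F)=\lambda^n\mathcal F\ne\mathcal F$). A pseudo-Anosov $\phi'$ cannot preserve the isotopy class of an essential simple closed curve: its action on $\mathbb P\mathcal{MF}(S)$ has north--south dynamics with fixed points $[\mathcal F],[\mathcal F']$ the only periodic points, and since $[\mathcal F],[\mathcal F']$ fill, neither is the class of a simple closed curve, so no power of $\phi'$ fixes such a class. The main obstacle is the non-attained case: turning the rough pigeonhole sketch into an honest extraction of a $\phi$-invariant multicurve — controlling the distortion of short curves along a minimizing sequence and passing to a limit — is precisely the technical heart of Bers' argument; once the Teichm\"uller-metric dichotomy is in place, everything else is soft.
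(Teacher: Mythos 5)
This is a background theorem that the paper does not prove: it is quoted verbatim with a citation to Farb--Margalit, Chapter 13, and that chapter presents precisely the Bers argument you outline, so your route coincides with the paper's cited source rather than with any argument in the paper itself. Your treatment of the two attained cases ($a(\phi)=0$ attained gives an isometry of a hyperbolic surface, hence a finite-order representative; $a(\phi)>0$ attained gives an invariant Teichm\"uller geodesic and a pseudo-Anosov with dilatation $e^{a(\phi)}$) and of mutual exclusivity is correct in outline.

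The unrealized case, however, has a genuine gap as written, and it is exactly at the point you flag. Extracting ``a single essential isotopy class $c$ with $\ell_{X_n}(c)\to 0$ and $i(c,\phi^k(c))=0$ along a subsequence'' and then ``closing up the $\phi$-orbit of $c$'' does not produce an invariant multicurve: for the orbit to close up you need $i(\phi^j(c),\phi^m(c))=0$ for \emph{all} $j,m$ with one and the same $c$, whereas your diagonal argument only gives, for each fixed $m$, some curve $c_n$ (depending on $n$) disjoint from $\phi^m(c_n)$; since the classes $c_n$ may change with $n$, no single $c$ is obtained, and a priori the orbit of any particular short curve could be infinite with positive intersections far out along it. The standard repair, which is the one in the cited chapter, pigeonholes over length scales at a \emph{single} surface $X_n$ far along the minimizing sequence rather than over $n$: with $K$ bounding the quasiconformal distortion coming from $d_{\mathrm T}(X_n,\phi\cdot X_n)\le a(\phi)+1$, Wolpert's lemma says $\phi^{\pm1}$ multiplies hyperbolic lengths by at most $K$, so the nested sets $A_j=\{c:\ell_{X_n}(c)<\epsilon/K^{j}\}$, $j=0,\dots,3g-3$, satisfy $|A_0|\le 3g-3$ and hence $A_{j}=A_{j+1}$ for some $j$; then $\phi(A_{j+1})\subseteq A_j=A_{j+1}$, and $A_{j+1}$ is finite, nonempty for $n$ large (it contains the systole), and consists of pairwise disjoint curves, so it is a $\phi$-invariant multicurve. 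With that substitution the argument is complete; note also that the same reasoning covers $a(\phi)=0$ not attained, and that your closing remark (``if after cutting every piece is periodic we are in case (1)'') is not the right dichotomy -- a Dehn twist has trivial restrictions to the complementary pieces but infinite order -- though this is harmless, since the trichotomy only requires: if the class is periodic it is in case (1), otherwise the invariant multicurve places it in case (2).
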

\begin{rmk} Note that our definition of reducible mapping class is non-standard as we assume that if $\phi$ is reducible, then it is not periodic. We do so to improve our exposition. The set $\{c_1,\dots, c_r\}$ in item (2) is a {\em reduction system} of $\phi$. The {\em canonical reduction system} $\{\bar c_1,\ldots,\bar c_k\}$ of $\phi$ reducible is the intersection of all the maximal (with respect to inclusion) reduction systems. Equivalently, each $\bar c_j$ is part of a reduction system and if $i(\bar c_j,c)\neq 0$ and $n\neq 0$, then $\phi^n(c)\neq c$.
\end{rmk}

We are ready to characterize the fixed points of a mapping class acting on $\mathfrak B$ and establish Proposition \ref{prop:upstairsfixedpts} from the introduction.

\begin{prop} Suppose $\phi\in\mathrm{MCG}(S)$ and $\phi(x)=x$ for some $x=(x_1,x_2)\in\mathfrak B$.
\begin{enumerate}
    \item If $\phi$ is periodic, then $x_1$ and $x_2$ are any two points fixed by $\phi$ in the Thurston compactification of Teichm\"uller space such that $(x_1,x_2)\in\mathfrak B$.
    \item If $\phi$ is pA, then $(x_1,x_2)\in\partial \mathfrak B$ and $x_1=0$, or $x_2=0$ or $x_1=x_2$.
\end{enumerate}
\end{prop}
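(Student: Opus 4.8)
The plan is to handle the periodic and pseudo-Anosov cases separately, in each case reducing to the known behavior of the mapping class group on the Thurston compactification of a single copy of Teichm\"uller space and then using the explicit product structure $\mathfrak B\cong \overline{\mathrm{Teich}(S)\times\mathrm{Teich}(S)}$ from Theorem~\ref{thm:closedball}, together with the description of the attaching map $\psi$ and the fact (Lemma~\ref{lem:MCGextends}) that $\phi$ acts on $\mathfrak B$ diagonally as $(\phi,\phi)$ on the two Teichm\"uller factors and by the induced action on $\mathbb P(\mathcal{ML}(S)\times\mathcal{ML}(S))$ on the boundary sphere.

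For part (1), suppose $\phi$ is periodic, say $\phi^m=\mathrm{id}$. First I would recall that a periodic mapping class, being realized by an isometry of some hyperbolic metric (Nielsen realization), fixes a point in $\mathrm{Teich}(S)$; hence $\phi$ has nonempty fixed set in the \emph{closed} ball $\overline{\mathrm{Teich}(S)}$, and more to the point the fixed set in each factor is exactly the set of $\phi$-fixed points of the Thurston compactification. The content is then just that the diagonal action $(\phi,\phi)$ on $\mathfrak B$ has fixed set equal to $(\mathrm{Fix}(\phi)\times\mathrm{Fix}(\phi))\cap\mathfrak B$: a point $x=(x_1,x_2)$ in the interior is fixed iff $\phi x_1=x_1$ and $\phi x_2=x_2$ (since the homeomorphism $\Phi$ of Wolf's parametrization is $\mathrm{MCG}$-equivariant and the product action is coordinatewise), and on the boundary $x=[\lambda_1,\lambda_2]$ is fixed iff $[\lambda_1,\lambda_2]=[\phi\lambda_1,\phi\lambda_2]$, i.e.\ $\phi\lambda_i=c\lambda_i$ for a common $c>0$; but $\phi$ periodic forces $c=1$ (applying $\phi^m$ gives $c^m=1$), so each $\lambda_i$ is genuinely $\phi$-fixed. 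I would note that either $x_i$ could be a fixed projective measured lamination or a fixed point of $\mathrm{Teich}(S)$, and that $\mathfrak B$ excludes exactly the pair $(\infty,\infty)$-type configurations not in $\mathrm{Teich}(S)^2\sqcup\mathbb P(\mathcal{ML}^2)$, which is what the qualifier ``such that $(x_1,x_2)\in\mathfrak B$'' records.

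For part (2), suppose $\phi$ is pseudo-Anosov with invariant foliations $\mathcal F^+,\mathcal F^-$ and dilatation $\lambda>1$. The key input is that $\phi$ acts on $\overline{\mathrm{Teich}(S)}$ with \emph{exactly two} fixed points, the projective classes $[\mathcal F^+]$ and $[\mathcal F^-]$ on the boundary $\mathbb P\mathcal{MF}(S)$, and with north--south dynamics; in particular $\phi$ fixes no point of the open cell $\mathrm{Teich}(S)$. Hence if $x=(x_1,x_2)\in\mathfrak B$ is fixed, then $x\in\partial\mathfrak B=\mathbb P(\mathcal{ML}(S)\times\mathcal{ML}(S))$ — because a fixed point with $x_1$ or $x_2$ in the interior Teichm\"uller factor is impossible, and the only way both coordinates can sit in the interior is $x\in\mathrm{Teich}(S)^2$, already excluded. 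Writing $x=[\lambda_1,\lambda_2]$ with $(\lambda_1,\lambda_2)\neq(0,0)$, the fixed-point condition $\phi[\lambda_1,\lambda_2]=[\lambda_1,\lambda_2]$ means there is $c>0$ with $\phi\lambda_i=c\lambda_i$ for $i=1,2$. Now each $\lambda_i$ is either $0$ or a genuine eigenlamination: if $\lambda_i\neq 0$ then $\phi\lambda_i=c\lambda_i$ forces, by uniqueness of invariant projective laminations for a pA map, that $[\lambda_i]\in\{[\mathcal F^+],[\mathcal F^-]\}$, and correspondingly $c=\lambda$ or $c=\lambda^{-1}$. If both $\lambda_1,\lambda_2\neq 0$, they must share the \emph{same} eigenvalue $c$, hence the same projective class, so $x_1=x_2$; otherwise at least one $\lambda_i$ vanishes, giving $x_1=0$ or $x_2=0$. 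This yields the trichotomy.

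\textbf{The main obstacle} I anticipate is making precise the two ``known'' dynamical facts I am leaning on — that $\phi$ acts on the boundary projective measured lamination space with a unique attracting/repelling pair when pA, and that periodic $\phi$ has the stated fixed set — and in particular verifying that the continuous $\mathrm{MCG}$-action on $\mathfrak B$ provided by Lemma~\ref{lem:MCGextends} really does restrict on $\partial\mathfrak B$ to the standard diagonal action on $\mathbb P(\mathcal{ML}(S)\times\mathcal{ML}(S))$ (rather than to something twisted by the gluing). This should follow from the explicit attaching map $\psi$ in the proof of Theorem~\ref{thm:closedball}: $\psi$ is built from the Wolf/Hubbard--Masur correspondence between rays of quadratic differentials and projectivized foliations, both of which are $\mathrm{MCG}$-equivariant, so $\psi$ intertwines the action on $\overline{\mathrm{BPQD}(X)}$ with the standard action on $\mathbb P(\mathcal{MF}(S)^2)$; I would spell this out as the one genuinely new lemma needed, and then the rest is the bookkeeping above plus citing the classical single-surface statements (e.g.\ from \cite{fathi2021thurston, farb2011primer}).
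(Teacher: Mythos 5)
Your proposal is correct and takes essentially the same route as the paper: both reduce the fixed-point condition to a single common scaling factor $\alpha>0$ on the pair, force $\alpha=1$ by periodicity in case (1), and in case (2) use that a pseudo-Anosov projectively fixes only its two invariant laminations to conclude each coordinate is $0$ or one of them, then exclude the mixed pair via the common scalar. The only minor difference is the last step: the paper rules out $(\mathcal F^+,\mathcal F^-)$ by the intersection-number identity $\lambda\, i(y_1,y_2)=\alpha\, i(y_1,y_2)=\tfrac{1}{\lambda} i(y_1,y_2)$, while you use the eigenvalue mismatch $\lambda\neq\lambda^{-1}$; both are immediate consequences of the same common-scaling observation.
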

 \begin{proof} 
\begin{enumerate} 
 \item 
 If $\phi$ fixes $x$ projectively, there exists $\alpha>0$ such that $\phi(x_1,x_2)=(\alpha x_1,\alpha x_2)$. Since $\phi$ is periodic, we can check that $\alpha=1$. 

\item Since $\phi$ fixes the projective class of $(x_1,x_2)$, there exists $\alpha>0$ such that $\phi(x_1,x_2)=(\alpha x_1,\alpha x_2)$. On the other hand, since $\phi$ is pseudo-Anosov, there exist two measured laminations $y_1$ and $y_2$ and $\lambda>1$ such that $\phi(y_1)=\lambda y_1$ and $\phi(y_2)=\frac{1}{\lambda} y_2$. Since $\phi$ does not fix any other projective class of measured laminations \cite[Corollary 12.4]{fathi2021thurston}, it follows that $x_i=0$, $y_1$ or $y_2$ for $i=1,2$. We claim that $x\neq (y_1,y_2)$ (and,  symmetrically, $x\neq (y_2,y_1)$). Otherwise, because $i(y_1,y_2)\neq 0$
\[
\lambda\cdot i(y_1,y_2)=i(\phi(y_1),y_2)=\alpha\cdot i(y_1,y_2)= i(y_1,\phi(y_2))=\frac{1}{\lambda}\cdot i(y_1,y_2)
\]
which is a contradiction. \qedhere
 \end{enumerate}
 \end{proof}

There is a natural continuous projection map $\pi\colon\mathfrak B\to \overline{\mathrm{Ind}(S)}$ defined as follows. For $x\in\text{Max}(S)$, consider the corresponding equivariant minimal Lagrangian $\widetilde\Sigma_x$. Then, $\pi(x)$ is the induced metric on $\widetilde\Sigma_x$. Otherwise, if $x\in\partial\mathfrak B$, consider the core of the tree corresponding to $x \in \Pp(\mathcal{ML}(S) \times \mathcal{ML}(S))$: its length spectrum coincides with that of a mixed structure $\mu$ on $S$. Set $\pi(x)=\mu$. This projection $\pi$ is continuous then by \cite[Theorem 6.13]{Charles_dPSL}. We consider the corresponding action of $\mathrm{MCG}(S)$ on $\overline{\mathrm{Ind}(S)}$ given by push-forward.

\begin{lemma} The action of $\mathrm{MCG}(S)$ on $\mathfrak B$ and $\overline{\mathrm{Ind}(S)}$ commute. In other words, for every $\phi\in\mathrm{MCG}(S)$
\[
\pi\circ \phi=\phi\circ \pi.
\]
\end{lemma}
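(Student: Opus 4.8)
The plan is to verify the identity $\pi\circ\phi=\phi\circ\pi$ separately on the two pieces of $\mathfrak B$, and then invoke density and continuity to conclude. The key point is that every object involved — the minimal Lagrangian, the induced metric, the core of a product of trees, and the mixed structure — is defined in a purely equivariant/intrinsic way, so the mapping class group action is natural with respect to all the constructions.

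First I would treat the interior. Let $x\in\mathrm{Max}(S)$ correspond to a representation $\rho=(\rho_1,\rho_2)$ and let $\widetilde\Sigma_x\subset\h^2\times\h^2$ be the unique $\rho$-equivariant minimal Lagrangian produced by Schoen. Given $\phi\in\mathrm{MCG}(S)$ represented by a diffeomorphism $f$, the class $\phi(x)$ is represented by $\rho\circ f_*^{-1}$, and the surface $\widetilde\Sigma_x$ is then $\rho\circ f_*^{-1}$-equivariant; by uniqueness it equals $\widetilde\Sigma_{\phi(x)}$ as a subset of $\h^2\times\h^2$ (after conjugating by the lift of $f$ if one fixes a basepoint). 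Consequently the induced Riemannian metric on $\widetilde\Sigma_{\phi(x)}$ descends to the pull-back under $f$ of the metric descending from $\widetilde\Sigma_x$, i.e.\ $\pi(\phi(x))=\phi_*(\pi(x))=\phi(\pi(x))$. This is the routine part.

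Next I would treat the boundary. For $x\in\partial\mathfrak B$ corresponding to $[\lambda_1,\lambda_2]\in\Pp(\mathcal{ML}(S)\times\mathcal{ML}(S))$, the trees $T_i$ dual to $\lambda_i$ satisfy $T_{\phi(\lambda_i)}=\phi\cdot T_i$ (the $\pi_1(S)$-action precomposed with $\phi_*^{-1}$), and since the core $\mathcal C(T_1,T_2)$ is constructed canonically from the two factors and their diagonal $\pi_1(S)$-action (Guirardel's construction via the map $F$ of \eqref{eq:F}, which depends only on intrinsic data), the core corresponding to $\phi(x)$ is the same subcomplex equipped with the $\phi$-twisted action, hence has the $\phi_*^{-1}$-pulled-back length spectrum. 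The mixed structure $\mu$ realizing this length spectrum is unique up to isotopy (negatively curved metrics and mixed structures are determined by their marked length spectra, as recalled in the background section), so $\pi(\phi(x))=\phi_*(\mu)=\phi(\pi(x))$. I would phrase this so that the decomposition of Lemma~\ref{lm:decomposition_pair_laminations} and the pieces of the core are all transported equivariantly by $\phi$.

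Finally, both $\pi$ and the $\mathrm{MCG}(S)$-actions on $\mathfrak B$ and on $\overline{\mathrm{Ind}(S)}$ are continuous (the former by \cite[Theorem 6.13]{Charles_dPSL} together with Lemma~\ref{lem:MCGextends}), and $\mathrm{Max}(S)$ is dense in $\mathfrak B$, so the identity $\pi\circ\phi=\phi\circ\pi$ on the interior already forces it on all of $\mathfrak B$; the boundary computation above serves as an independent check. The only mild subtlety — and the step I expect to require the most care — is bookkeeping the basepoint: the compactification and the constructions all depend on an auxiliary choice (a complex structure, a hyperbolic metric), and one must make sure that the diffeomorphism $f$ representing $\phi$ is allowed to move that basepoint and that the resulting identifications are canonical up to isotopy, exactly as was used at the end of the proof of Theorem~\ref{thm:closedball} where the base point plays only an auxiliary role. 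Once that is in place the argument is essentially a naturality statement.
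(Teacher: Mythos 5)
Your proposal is correct and follows essentially the same route as the paper: naturality of the minimal Lagrangian/induced metric gives $\pi\circ\phi=\phi\circ\pi$ on $\mathrm{Max}(S)$, and the identity extends to $\partial\mathfrak B$ by density of the interior together with continuity of $\pi$ and of the mapping class group actions. Your separate direct verification on the boundary via equivariance of Guirardel's core is a harmless (and correct) redundancy that the paper omits.
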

\begin{proof} If $x=(x_1,x_2)$ is in the interior of $\mathfrak B$, then $\pi(\phi(x))=\phi(\pi(x))$ because $\phi(\pi(x))$ has the same length spectrum as the induced metric on the minimal Lagrangian associated to $\phi(x_1)$ and $\phi(x_2)$.  Suppose $x\in\partial\mathfrak B$ and consider a sequence $(x_n)_{n\in\mathbb N}\subset\mathrm{Max}(S)$ such that $x_n\to x$. Since, $\pi(\phi(x_n))=\phi(\pi(x_n))$ for all $n\in\mathbb N$, the result follows by continuity of $\phi$ and $\pi$.
\end{proof}

We are now ready to establish the main theorems of this section. In particular, Theorem \ref{thm:thmBagain} below is Theorem \ref{thm:actionMix} from the introduction.

\begin{thm}\label{thm:thmBagain} Assume $\phi\in\mathrm{MCG}(S)$ fixes $\mu\in\partial\overline{\mathrm{Ind}(S)}$. 
\begin{enumerate}
	\item If $\mu$ is {\em purely flat}, then $\phi$ is periodic.
	\item If $\mu$ is {\em properly mixed}, then $\phi$ is not pA.
\end{enumerate}
\end{thm}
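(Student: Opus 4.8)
The plan is to reduce both statements to the structural description of a mixed structure $\mu$ obtained from the core of a product of trees, via Lemma \ref{lm:decomposition_pair_laminations}, and then to run the Nielsen--Thurston dichotomy against the invariants carried by $\mu$. Throughout, by the previous lemma the $\mathrm{MCG}(S)$-action on $\overline{\mathrm{Ind}(S)}$ is compatible with the action on $\mathfrak B$, so fixing $\mu=\pi(x)$ is controlled by how $\phi$ acts on $x=(x_1,x_2)\in\Pp(\mathcal{ML}(S)\times\mathcal{ML}(S))$; the length spectrum of $\mu$ is, up to scale, determined by the pair of measured laminations $(\lambda_1,\lambda_2)$ representing $x$, and $\phi$ fixing $\mu$ means $\phi$ permutes the support data of $(\lambda_1,\lambda_2)$ projectively. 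I would phrase everything in terms of marked length spectra so that the rigidity input (Otal) lets me pass freely between metrics and length functions.

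\emph{Part (1): $\mu$ purely flat.} A purely flat $\mu$ is a singular flat metric coming from a holomorphic quadratic differential $q$ on $S$, equivalently the pair $(\lambda_1,\lambda_2)$ is a \emph{filling} pair of measured laminations (the decomposition of Lemma \ref{lm:decomposition_pair_laminations} has a single region, of type $ii)$, equal to all of $S$). If $\phi$ fixes $\mu$ then $\phi$ preserves the projectivized flat length spectrum, hence $\phi^*q=c\,q$ for a constant $c$ of modulus $1$ after normalizing area, so $\phi$ is an isometry of the singular flat surface $(S,|q|)$; the group of such isometries is finite (it embeds in a product of a finite permutation of zeros with a rotation group fixing a generic direction), so $\phi$ is periodic. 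To make this rigorous I would argue: $\phi$ fixes the projective class of the pair $(\mathcal F_h,\mathcal F_v)$ of horizontal/vertical foliations of $q$; by Gardiner--Masur a filling pair determines a unique half-translation surface; uniqueness forces $\phi$ to act by an automorphism of that surface; and the automorphism group of a compact half-translation surface is finite, giving periodicity. One must also rule out $\phi$ swapping the two foliations with a scale, which would make $\phi$ reverse orientation on leaf directions but still lies in the finite isometry group — so periodicity still holds.

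\emph{Part (2): $\mu$ properly mixed, $\phi$ pseudo-Anosov — derive a contradiction.} Suppose $\phi$ is pA with stretch factor $\lambda>1$ and invariant foliations $\mathcal F^+,\mathcal F^-$; as in Proposition \ref{prop:upstairsfixedpts}(2), $\phi$ fixes no projective class of measured lamination other than $[\mathcal F^\pm]$, and it fixes no proper essential multicurve. Since $\mu$ is properly mixed, Lemma \ref{lm:decomposition_pair_laminations} gives a \emph{nonempty proper} system of curves $\gamma_1,\dots,\gamma_n$ together with at least one type-$i)$ region (the nonzero laminar/degenerate part) and at least one nontrivial type-$ii)$ flat region, and this decomposition is canonical, hence preserved by $\phi$ up to permutation. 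In particular $\phi$ permutes the isotopy classes $\{\gamma_j\}$, contradicting the fact that a pA fixes no essential multicurve — provided the system $\{\gamma_j\}$ is genuinely essential and nonempty, which is exactly what ``properly mixed'' guarantees (a properly mixed structure is not a global flat metric, so there is a real flat/laminar interface along some essential curve). The one subtlety is the possibility that $n=0$, i.e. the flat and laminar parts coexist without separating curves; but by the construction of the core this cannot happen — type-$i)$ and type-$ii)$ regions are glued along the lifts of the $\gamma_j$, so a nonempty laminar part together with a nonempty flat part forces $n\ge 1$. Hence the contradiction and $\phi$ is not pA.

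\emph{Main obstacle.} The delicate point is Part (2): I need the curve system furnished by Lemma \ref{lm:decomposition_pair_laminations} to be genuinely canonical (independent of choices) and nonempty precisely under the hypothesis ``properly mixed,'' so that $\phi$-invariance of $\mu$ forces $\phi$-invariance of an essential multicurve. Establishing this requires showing that the maximal collection of curves with $i(\lambda_1,\gamma)+i(\lambda_2,\gamma)=0$ is uniquely determined (up to isotopy) by the mixed structure $\mu$ — not merely by the pair $(\lambda_1,\lambda_2)$ — and that it is proper and nonempty exactly when $\mu$ has a flat piece and is not globally flat. I expect to handle canonicity by characterizing these curves intrinsically in terms of $\mu$ (curves of zero $\mu$-length that are not in a flat subsurface, or curves along which the flat and laminar pieces are glued), and then invoking that a pA cannot preserve such a system; this is where most of the genuine work lies, whereas Part (1) is essentially the classical finiteness of automorphisms of flat surfaces.
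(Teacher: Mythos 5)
Your strategy for item (2) is essentially the paper's. The paper defines the curve system intrinsically in terms of $\mu$ (pairwise disjoint curves $d_\beta$ with $i(d_\beta,\mu)=0$ such that every curve crossing some $d_\beta$ has positive intersection with $\mu$), gets canonicity from the uniqueness statement of \cite[Theorem 1.1]{BIPP}, shows in Claim \ref{important claim} that $\phi$ permutes $\{d_\beta\}_{\beta\in\mathcal B}$, and concludes since this system is nonempty when $\mu$ is properly mixed and no power of a pseudo-Anosov fixes an essential simple closed curve. So the ``main obstacle'' you identify --- that the decomposition must be canonical for $\mu$ itself, not merely for a chosen pair $(\lambda_1,\lambda_2)$ representing a point of $\partial\mathfrak B$ over $\mu$ --- is a real issue, but it is exactly the content of the BIPP decomposition theorem that the paper invokes; your intended intrinsic characterization (curves of zero $\mu$-intersection along which the pieces are glued) is the right one, and you should close that step by citing that uniqueness result rather than leaving it as an expectation. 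With that reference in hand, your part (2) and the paper's Claim-plus-uniqueness argument are the same proof.

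For item (1) you take a genuinely different route. The paper rules out pseudo-Anosov classes by the intersection-number identity $i(\mu,\mathcal F)=\tfrac{1}{\alpha\lambda}\,i(\mu,\mathcal F)$, using that a purely flat $\mu$ comes from a filling pair and hence pairs positively with every measured foliation, and then treats reducible classes by restricting a power of $\phi$ to a pseudo-Anosov piece. You instead upgrade $\phi(\mu)=\alpha\mu$ to an actual isometry: marked-length-spectrum rigidity for flat metrics (\cite{DLR_flat}) gives $\phi_*\mu=\alpha\mu$ as metrics, area comparison forces $\alpha=1$, and finiteness of the isometry group of a genus $\geq 2$ half-translation surface gives periodicity. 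This is correct and has the advantage of handling all non-periodic classes uniformly (including multitwists, which are not literally covered by ``restrict to a pA piece''), at the cost of invoking flat-metric rigidity and the finiteness of the flat isometry group. One small repair: fixing $\mu$ does \emph{not} immediately give that $\phi$ fixes the projective class of the pair $(\mathcal F_h,\mathcal F_v)$, since the flat metric determines $q$ only up to rotation $e^{i\theta}q$; but you never need this, as the isometry statement alone suffices, so the appeal to Gardiner--Masur can simply be dropped in favor of the rigidity argument.
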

\begin{proof} 

For item (1), let us first assume by contradiction $\phi$ is pA. Since $\phi$ fixes $\mu$ projectively, there exist $\alpha>0$ such that $\phi(\mu)=\alpha \mu$ and up to replacing $\phi$ with $\phi^{-1}$, we can assume $\alpha\geq 1$. Now, there exists a measured foliation $\mathcal F$ and $\lambda>1$ such that $\phi(\mathcal F)=\lambda\mathcal F$. Recall that $\mu$ corresponds to a pair of filling measured foliations, thus $0\neq i(\mu,\mathcal F)$.  On the other hand,
\[
0\neq i(\mu,\mathcal F)=\frac{1}{\alpha}i(\phi(\mu),\mathcal F)=\frac{1}{\alpha}i(\mu,\phi^{-1}(\mathcal F))=\frac{1}{\alpha\lambda}i(\mu,\mathcal F)
\]
and we achieve a contradiction.

On the other hand, suppose by contradiction that $\mu$ is purely flat and $\phi$ is reducible. Then, there is a subsurface $T\subset S$ and $N\geq 0$ such that $\big(\phi^N\big)_{\big\vert_T}\colon T\to T$ and $\big(\phi^N\big)_{\big\vert_T}$ is pA. Applying the above argument to $\big(\phi^N\big)_{\big\vert_T}$ we achieve again a contradiction.

We establish item (2). Suppose $\mu$ is properly mixed, i.e. $\mu$ is not flat but it has at least one flat piece. We can decompose $S$ as
\[
\left(\{S_\alpha\}_{\alpha\in\mathcal A}, \{d_\beta\}_{\beta\in\mathcal B},\{\mu_\alpha\}_{\alpha\in\mathcal A}\right)
\]
where $\mu_\alpha$ is a flat structure or a (possibly-zero) laminar structure on $S_\alpha$ and $d_\beta$ is a maximal collection of closed geodesics so that
\[
i(d_\beta,d_{\beta'})=0\quad\text{and}\quad i(d_\beta,\mu)=0
\]
for all $\beta,\beta'\in\mathcal B$ and for every $c$ that intersects some $d_\beta$ transversely, $i(c,\mu)>0$. Note that there exists a unique set $\{d_\beta\}_{\beta\in\mathcal B}$ with these properties (see \cite[Theorem 1.1]{BIPP}).

\begin{claim}\label{important claim} $\phi$ fixes the set $\{d_\beta\}_{\beta\in\mathcal B}$.
\end{claim}
\begin{proof}[Proof of Claim] Observe that
\[
i(\phi(d_\beta),\phi(d_{\beta'}))=i(d_\beta,d_{\beta'})=0
\quad\text{and}\quad i(\mu,\phi(d_\beta))=i(\phi^{-1}(\mu),d_\beta)=0.
\]
If $c$ is a curve that intersects $\phi(d_\beta)$ transversely, then
\[
i(\phi^{-1}(c),d_\beta)=i(c,\phi(d_\beta))>0\quad\text{and}\quad
i(c,\mu)=i(\phi^{-1}(c),\phi^{-1}(\mu))>0.
\]
Thus, by uniqueness, $\{\phi(d_\beta)\}_{\beta\in\mathcal B}=\{d_\beta\}_{\beta\in\mathcal B}$.
\end{proof}
Thanks to the previous claim, we know that there exists $N>0$ such that for all $\alpha$ and $\beta$
\[
\phi^N(d_\beta)=d_\beta,\text{ and }\phi^N(S_\alpha)=S_\alpha.
\]
If $\phi^N$ is periodic, we are done. So let us assume that $\phi^N$ is not periodic. Then $\phi^N$ cannot be pA because $\mu$ is properly mixed, so there exists $\alpha$ such that $\mu_\alpha$ is flat and we can apply the argument in item (2) to $S_{\alpha}$. Therefore, $\phi^N$ is reducible.
\end{proof}

\begin{rmk} For an explicit example of $\mu$ purely flat and $\phi$ periodic such that $\phi(\mu)=\mu$, consider a singular flat metric on a surface of genus two obtained by doubling a singular flat metric on a torus with boundary.
\end{rmk}

\begin{thm}\label{thm: reducible properly mixed} Suppose $\phi\in\mathrm{MCG}(S)$ is reducible and fixes $\mu\in\partial\overline{\Ind(S)}$ which is properly mixed. Let $S=(S_\alpha,\{d_\beta\}_{\beta\in\mathcal B},\mu_\alpha)$ be the subdivision of $S$ induced by $\mu$.
\begin{enumerate}
    \item If for some $N>0$, $\psi_\alpha=\big(\phi^N\big)_{\big\vert_ {S_\alpha}}\colon S_\alpha\to S_\alpha$ is pA, then $\mu_\alpha=0$.
    \item If $\mu_\alpha\neq 0$ for all $\alpha\in\mathcal A$, then the canonical reduction system of $\phi$ is contained in $\{d_\beta\}_{\beta\in\mathcal B}$.
\end{enumerate}
\end{thm}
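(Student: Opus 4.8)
The plan is to base both parts on three inputs. From the proof of Theorem~\ref{thm:thmBagain} we already know that $\phi$ preserves the curve system $\{d_\beta\}_{\beta\in\mathcal B}$; I fix $t>0$ with $\phi(\mu)=t\mu$ and $N>0$ divisible by the orders of the permutations induced by $\phi$ on $\{d_\beta\}_{\beta\in\mathcal B}$, on $\{S_\alpha\}_{\alpha\in\mathcal A}$ and on the canonical reduction system of $\phi$ (in part~(1), also by the exponent for which $\psi_\alpha$ is pseudo-Anosov, which is harmless since a power of a pseudo-Anosov is pseudo-Anosov), and I set $\psi_\alpha=\big(\phi^{N}\big)_{\big\vert_{S_\alpha}}$, so that $\psi_\alpha(\mu_\alpha)=t^{N}\mu_\alpha$. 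The second input is that, by maximality of the subdivision of $S$ induced by $\mu$ (cf.\ \cite[Theorem~1.1]{BIPP}), every nonzero piece $\mu_\alpha$ \emph{fills} $S_\alpha$: an essential non-peripheral curve of $S_\alpha$ disjoint from $\mu_\alpha$ could otherwise be adjoined to $\{d_\beta\}$. The third input, which I would isolate as a lemma, is: \emph{if $\nu$ is a geodesic current on a compact surface $\Sigma$ (boundary allowed) with $i(\nu,c)>0$ for every essential simple closed curve $c$ of $\Sigma$ — for instance a filling measured foliation or a flat structure — and $g\in\mathrm{MCG}(\Sigma)$ satisfies $g(\nu)\in\mathbb R^{+}\nu$, then $g$ is periodic or pseudo-Anosov, and if $g$ is pseudo-Anosov then $\nu$ is proportional to an invariant foliation of $g$ (hence $\nu$ is not a flat structure).} The pseudo-Anosov clause is the intersection-number computation already used in the proof of Theorem~\ref{thm:thmBagain} together with \cite[Corollary~12.4]{fathi2021thurston}; a reducible $g$ is excluded because the iterates of $g$ converge in $\mathbb P\mathcal{ML}(\Sigma)$ to a measured lamination carried by the reduction curves of $g$, which meets $\nu$ positively, contradicting that $i(g^{n}\nu,\,\cdot\,)$ is constant along that lamination.

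For part~(1), suppose for contradiction that $\psi_\alpha$ is pseudo-Anosov while $\mu_\alpha\neq 0$. If $\mu_\alpha$ is a flat structure, this contradicts the lemma outright. If $\mu_\alpha$ is laminar, then by \cite[Corollary~12.4]{fathi2021thurston} it is proportional to an invariant foliation of $\psi_\alpha$, so $t^{N}=\lambda^{\pm1}\neq 1$, where $\lambda>1$ is the dilatation of $\psi_\alpha$. Since $\mu$ is \emph{properly} mixed, there is a piece $S_{\alpha_0}$ on which $\mu_{\alpha_0}$ is a nonzero flat structure, and $\psi_{\alpha_0}$ scales it by $t^{N}\neq1$. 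This is impossible: no mapping class scales a flat structure by a factor $\neq1$ — a periodic class has a power equal to the identity; for a reducible class, a reduction curve meets the flat structure positively, forcing the factor to be $1$; a pseudo-Anosov class would, by intersecting with both its invariant foliations, force its dilatation to be $1$. This contradiction gives $\mu_\alpha=0$.

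For part~(2), assume $\mu_\alpha\neq0$ for all $\alpha$, hence every $\mu_\alpha$ fills $S_\alpha$, and let $\gamma$ be a curve of the canonical reduction system of $\phi$. First, $\gamma$ is disjoint from every $d_\beta$: if $i(\gamma,d_\beta)>0$ for some $\beta$, then, as $\phi$ permutes the finite set $\{d_\beta\}$, a power of $\phi$ fixes $d_\beta$, contradicting the characterization of the canonical reduction system recalled after Theorem~\ref{thm:thurstonclass}. So $\gamma$ is isotopic into some $S_\alpha$; if it is peripheral there, then $\gamma$ is one of the $d_\beta$ and we are done. Otherwise $\gamma$ is essential and non-peripheral in $S_\alpha$. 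Since $\phi^{N}$ fixes $\gamma$ and $S_\alpha$ we have $\psi_\alpha(\gamma)=\gamma$, while for any curve $c'$ of $S_\alpha$ crossing $\gamma$ the relation $\psi_\alpha^{p}(c')=c'$ with $p\neq0$ would give $\phi^{Np}(c')=c'$, again contradicting that $\gamma$ lies in the canonical reduction system of $\phi$; the curves of that system which are non-peripheral in $S_\alpha$ form a $\psi_\alpha$-invariant family of disjoint essential curves containing $\gamma$, so, by the same characterization applied to $\psi_\alpha$, $\gamma$ belongs to the canonical reduction system of $\psi_\alpha$. In particular $\psi_\alpha$ is reducible. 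But $\psi_\alpha$ fixes $[\mu_\alpha]$ with $\mu_\alpha$ filling $S_\alpha$, so the lemma forces $\psi_\alpha$ to be periodic or pseudo-Anosov — a contradiction. Hence every curve of the canonical reduction system of $\phi$ is among the $d_\beta$.

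The load-bearing step is the lemma, and within it the exclusion of reducible classes: this relies on Thurston's description of the limit in $\mathbb P\mathcal{ML}(\Sigma)$ of the iterates of a reducible mapping class, together with some care about geodesic currents restricted to subsurfaces with boundary. Everything else is bookkeeping with intersection numbers, the $\phi$-action on the finite sets $\{d_\beta\}$ and $\{S_\alpha\}$, and the definition of the canonical reduction system; part~(1) in particular uses only elementary intersection arithmetic once the pseudo-Anosov clause of the lemma is in hand.
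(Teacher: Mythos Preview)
Your argument is correct and follows essentially the same route as the paper: both proofs rest on the $\phi$-invariance of $\{d_\beta\}$, the fact that a flat piece pins the projective scaling factor to $1$, and intersection-number computations against invariant foliations of a pseudo-Anosov restriction. Your packaging is somewhat cleaner --- you extract a single lemma (``a filling current projectively fixed by $g$ forces $g$ periodic or pseudo-Anosov'') and then apply it uniformly, whereas the paper runs the case analysis inline. In part~(1) you also reverse the order of the contradiction (laminar pA piece gives $t^N\neq 1$, then the flat piece forbids this) compared with the paper (flat piece gives $t^N=1$, then the laminar pA piece is excluded); these are the same computation read in opposite directions. In part~(2) your observation that $\gamma$ actually lies in the canonical reduction system of $\psi_\alpha$ is a nice shortcut to ``$\psi_\alpha$ reducible'', after which your lemma finishes; the paper instead treats the three Nielsen--Thurston types of $\psi_\alpha$ separately.

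The one place that needs tightening is the reducible clause of your lemma. The assertion that ``the iterates of $g$ converge in $\mathbb P\mathcal{ML}(\Sigma)$ to a measured lamination carried by the reduction curves of $g$'' is not correct in general: if $g$ has a pseudo-Anosov piece on a subsurface, the projective limit of $g^n(c)$ for a transverse curve $c$ is (up to weights on the reducing curves) the attracting foliation on that piece, not something supported on the reducing multicurve. The argument you want is the paper's: first use a reduction curve to force $t=1$; then, on any subsurface where a power of $g$ is pA with stretch factor $z>1$ and unstable foliation $\mathcal F^s$, the filling hypothesis gives $i(\nu,\mathcal F^s)>0$, and
\[
i(\nu,\mathcal F^s)=i(g^{M}(\nu),\mathcal F^s)=i(\nu,g^{-M}(\mathcal F^s))=z^{-1}\,i(\nu,\mathcal F^s),
\]
a contradiction. (Equivalently, your limiting argument works once you note that $\|g^n(c)\|\to\infty$ while $i(\nu,g^n(c))=i(\nu,c)$ is constant, forcing $i(\nu,\lambda_\infty)=0$ against filling --- but you should not describe $\lambda_\infty$ as supported on the reduction curves.) You already flagged this step as load-bearing; with this correction your lemma and hence your proof go through.
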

\begin{proof}
By hypotheses, we can decompose $S$ as $
\left(\{S_\alpha\}_{\alpha\in\mathcal A}, \{d_\beta\}_{\beta\in\mathcal B},\{\mu_\alpha\}_{\alpha\in\mathcal A}\right)$.
By Claim \ref{important claim}, there exists $N>0$ such that $\phi^N$ fixes $d_\beta$ for all $\beta\in\mathcal B$ and $\phi^N(S_\alpha)=S_\alpha$. Set $\psi_\alpha=(\phi^N)_{\big\vert S_\alpha}\colon S_\alpha\to S_\alpha$. 

In order to prove item (1), we need to consider three cases.
\begin{itemize}
	\item[a.] If $\mu_\alpha=0$, then $\psi_\alpha$ can be any element in $\text{MCG}(S_\alpha)$.
	\item[b.] If $(S_\alpha,\mu_\alpha)$ is purely flat (there exists at least one $\alpha$ for which this happens), then $\psi_\alpha$ can only be periodic by Theorem \ref{thm:thmBagain} item (1).
	\item[c.] Suppose $(S_\alpha,\mu_\alpha)$ is purely laminar. Since $\mu$ has a flat piece $\mu_\beta$, we know that $\psi_{\beta}$ is periodic and hence it fixes $\mu_\beta$ (not just projectively). We deduce that $\phi^{N}(\mu)=\mu$, otherwise we could find $z\neq 1$ such that $\psi_\alpha(\mu_\alpha)=z\mu_\alpha$, but then $\phi^N$ would not fix $\mu$ projectively. 
	We can now conclude that $\psi_\alpha$ cannot be pA. This is because if $c$ is a curve such that $i(\mu_\alpha,c)>0$, then 
	\[
	i(\mu_\alpha,c)=i(\psi_\alpha^{-1} (\mu_\alpha),c)=i(\mu_\alpha,\psi_\alpha (c))
	\]
	but $i(\mu_\alpha,\psi_\alpha (c))\neq i(\mu_\alpha,c)$ because $\psi_\alpha$ would change the length of curves transverse to $\mu_\alpha$. 
\end{itemize}
This completes the proof of item (1).

We wish to prove that the canonical reduction system $\{\bar c_1,\ldots,\bar c_k\}$ of $\phi$ is a subset  of $\{d_{\beta}\}_{\beta\in\mathcal B}$ under the additional assumption that $\mu_\alpha\neq 0$ for all $\alpha\in\mathcal A$. First, observe that by Claim \ref{important claim} $\{d_{\beta}\}_{\beta\in\mathcal B}$ is contained in a maximal reduction system for $\phi$. In particular $i(\bar c_j,d_\beta)=0$ for all $j$ and $\beta$. Moreover, since $\mu$ is properly mixed, there exists $\beta$ such that $\mu_\beta$ is flat, hence $\psi$ fixes $\mu$, not just its projective class, as observed before.

Assume, by contradiction $\bar c_j\not\in \{d_{\beta}\}_{\beta \in \mathcal{B}}$. Suppose $\bar c_j$ is contained in a purely flat piece $(S_\alpha,\mu_\alpha)$. Then, by Theorem \ref{thm:thmBagain} necessarily $\psi_\alpha$ is periodic. But this contradicts the property that if $i(\bar c_j,c)\neq 0$ and $n\neq 0$, then $\phi^n(c)\neq c$ since there exists $m$ such that $\psi_\alpha^m$ is the identity. Therefore $\bar c_j$ is contained in a purely laminar piece $\mu_\alpha$. 

By definition of $\{d_\beta\}_{\beta\in\mathcal B}$, $\psi_\alpha$ fixes a measured lamination $\mathcal F$ of full support in $S_\alpha$. 
If $\psi_\alpha$ is pA, then this contradicts item (1). Assume that $\psi_\alpha$ is periodic, so that there exists $m>0$ such that $\psi_{\alpha}^{m}$ is the identity. Then, we achieve again a contradiction because there would exist $c$ such that $i(\bar c_j,c)\neq 0$ but $\phi^m(c)=c$.

Finally, assume that $\psi_\alpha$ is reducible. We first observe that there needs to exist a subsurface $S_{\alpha,\beta}$ such that an opportune power of $\psi_\alpha$ restricted to $S_{\alpha,\beta}$ is pA. Otherwise, there exists $N\in\mathbb Z_{\geq 0}$ such that $\psi_\alpha^N=\text{id}$ and we obtain a contradiction by considering any closed curve $c\in S_\alpha$ which intersects $\bar c_j$ transversely. Now, let $S_{\alpha,\beta}$ be a subsurface of $S_\alpha$ and $\psi_{\alpha,\beta}$ a power of $\psi_\alpha$ such that $\psi_{\alpha,\beta}\big\vert_{S_{\alpha,\beta}}$ is pA. Let $\mathcal F^s$ be the unstable measured foliation of $\psi_{\alpha,\beta}$ with stretch factor $z>1$. But then, as we observed earlier, $\psi_\alpha(\mathcal F)$ fixes the measured lamination (not just its projective class) and, since $\mathcal F$ has full support, there exists an integer $M>0$ such that
\[
0\neq i(\mathcal F,\mathcal F^s)=i(\psi_\alpha^M(\mathcal F),\mathcal F^s)=i(\mathcal F,\psi_\alpha^{-M}(\mathcal F^s))=\frac{1}{z}i(\mathcal F,\mathcal F^s),
\]
which gives a contradiction.
\end{proof}

\section{$\overline{\mathfrak{a}^{+}}$-valued measured laminations and mixed structures}\label{sec:weylcores}

In this final section we introduce Weyl chamber valued measured laminations and use them to refine the notion of mixed structures on a closed surface defined in \cite{DLR_flat}, and generalized to higher order differentials in \cite{OT_Sp4,OT}. We show that the core of the product of two trees dual to measured laminations is dual to such a mixed structure, thus giving a new interpretation of the boundary objects in our compactification of $\Max(S)$. 

Let $\mathfrak{g}$ be a real semisimple Lie algebra. The choice of a maximal compact subalgebra $\mathfrak{k}$ induces an orthogonal decomposition of $\mathfrak{g}$ for the Killing form: 
\[
    \mathfrak{g}=\mathfrak{k}\oplus\mathfrak{m} \ .
\]
A Cartan subalgebra $\mathfrak{a} \subset \mathfrak{g}$ is a maximal abelian subspace of $\mathfrak{m}$. This induces a decomposition of $\mathfrak{g}$ in $\ad(\mathfrak{a})$-eigenspaces
\[
    \mathfrak{g} = \mathfrak{g}_{0} \bigoplus _{\alpha \in \Sigma} \mathfrak{g}_{\alpha} \ .
\]
Elements of $\Sigma \subset \mathfrak{a}^{*}=\Hom(\mathfrak{a}, \R)$ are called restricted roots of $\mathfrak{a}$ in $\mathfrak{g}$. Here we can extract a subset $\Delta$ of \emph{simple} roots with the property that any $\alpha \in \Sigma$ can be expressed as linear combinations of simple roots with coefficients all of the same sign. This distinguishes, thus, a subset of positive roots that we denote by $\Sigma^{+}\subset \Sigma$. The closed positive Weyl chamber of $\mathfrak{a}$ associated to $\Sigma^{+}$ is then the cone
\[
    \overline{\mathfrak{a}^{+}} = \{ X \in \mathfrak{a} \ | \ \alpha(X)\geq 0 \ \ \forall \alpha \in \Sigma^{+}\} \ .
\]
We also denote by $W$ the Weyl group of $\mathfrak{g}$, i.e. $W=N(\mathfrak{a})/\mathfrak{a}$. 

\begin{deff}\label{def:Weyl_laminations} An $\overline{\mathfrak{a}^{+}}$-valued measured lamination on a (not necessarily closed) surface $S$ is a geodesic lamination $\lambda$ on $S$ that supports a measure $\mu$ on transverse arcs that takes value in $\overline{\mathfrak{a}^{+}}$ and satisfies the following properties:
\begin{enumerate}[label=\alph*)]
    \item $\mu(\gamma) \neq 0$, if $\gamma$ intersects $\lambda$ transversely;
    \item if $\gamma$ and $\gamma'$ are homotopic arcs transverse to $\lambda$ and there is a homotopy between them that preserves transversality at every time, then $\mu(\gamma)=\mu(\gamma')$;
    \item $\mu$ is additive on concatenation of paths, i.e. $\mu(\gamma\gamma')=\mu(\gamma)+\mu(\gamma')$ for all $\gamma$ and $\gamma'$ transverse to $\lambda$ such that concatenation is defined.
\end{enumerate}
\end{deff}

\begin{rmk}\label{rmk:examples} If $\mathfrak{g}=\mathfrak{sl}(2,\R)$, then we can identify the closed positive Weyl chamber with $\R_{\geq 0}$. Thus, in this case, Definition \ref{def:Weyl_laminations} recovers the standard notion of measured laminations. Similarly, if $\mathfrak{g}=\mathfrak{sl}(2,\R)\oplus \mathfrak{sl}(2,\R)$, then $\overline{\mathfrak{a}^{+}}$-valued laminations can be identified with ordered pairs $(\lambda_{1}, \lambda_{2})$, such that $\lambda_{1}$, $\lambda_{2}$ and $\lambda_{1}+\lambda_{2}$ are measured laminations (i.e. $\lambda_{1}$ and $\lambda_{2}$ are nowhere transverse).
\end{rmk}

We can also extend the classical notion of trees dual to a measured lamination to this context.

\begin{deff}\label{def:dual_vector_trees} Let $(T,d)$ be an $\mathbb{R}$-tree acted upon by the fundamental group of $S$. We say that the action of $\pi_{1}(S)$ is dual to an $\overline{\mathfrak{a}^{+}}$-valued measured lamination $\mu$, if there is an equivariant map $p:\widetilde{S} \rightarrow T$ and an $\overline{\mathfrak{a}^{+}}$-valued distance $d_{\mathfrak{a}^{+}}: T\times T \rightarrow \overline{\mathfrak{a}^{+}}$ such that
\begin{enumerate}[label=\alph*)]
    \item for all $x, y \in \widetilde{S}$, we have $d_{\mathfrak{a}^{+}}(p(x),p(y))=\mu(\gamma)$ for some (hence any) arc $\gamma:[0,1] \rightarrow \widetilde{S}$ transverse to the support of $\mu$ with $\gamma(0)=x$ and $\gamma(1)=y$;
    \item given a geodesic path $\gamma:[0,1] \rightarrow T$, we have $d(\gamma(0), \gamma(1))\geq \| d_{\mathfrak{a}^{+}}(\gamma(0), \gamma(1))\|$. Here $\| \cdot \|$ denotes the standard Euclidean norm of a vector in $\overline{\mathfrak{a}^{+}}$
\end{enumerate}
\end{deff}

We now combine $\overline{\mathfrak{a}^{+}}$-valued measured laminations with the classical notion of $1/k$-translation surfaces in order to define a hybrid structure on $S$. 

\begin{deff}\label{def:mixed_structures} Let $\overline{\mathfrak{a}^{+}}$ be a closed Weyl chamber and $k\geq 1$ an integer. An $(\overline{\mathfrak{a}^{+}}, k)$-mixed structure on a closed surface $S$ is the datum of
\begin{enumerate}
    \item a collection of non-homotopically trivial, pairwise non-homotopic, disjoint simple closed curves $\gamma_{1}, \dots , \gamma_{n}$ on $S$;
    \item for each connected component $S'$ of $S\setminus \cup_{j} \gamma_{j}$ either
        \begin{itemize}
            \item an $\overline{\mathfrak{a}^{+}}$-valued measured lamination $\lambda$, where we allow each $\gamma_{j}$ to be in the support; or
            \item a meromorphic $k$-differential of finite area that endows $S'$ with a $1/k$-translation surface structure.
        \end{itemize}
\end{enumerate}
\end{deff}

These $(\overline{\mathfrak{a}^{+}}, k)$-mixed structures can be interpreted as dual to the $(\mathfrak{a}, W)$-complexes studied by Anne Parreau (\cite{parreau_invariant}) in the context of $\mathfrak{g}=\mathfrak{sl}(3,\R)$. Let us recall briefly how these complexes are defined and explain in which sense these notions can be considered dual to each other.

Following \cite{parreau_invariant}, an $(\mathfrak{a}, W)$-complex $K$ is the union of (possibly degenerate) polygons in $\mathfrak{a}$ glued together along boundary segments via elements of $W_{aff}=W\rtimes \R$. More precisely, there is a family of affine simplices $P_{\mu} \subset \mathfrak{a}$ and injective maps $\phi_{\mu}: P_{\mu} \rightarrow K$ such that if $K_{\mu}=\phi_{\mu}(P_{\mu})$ and $K_{\mu'}=\phi_{\mu'}(P_{\mu'})$ have non-empty intersection then there is $w_{\mu,\mu'} \in W_{aff}$ such that $\phi_{\mu}(x)=\phi_{\mu'}(x')$ if and only if $x'=w_{\mu, \mu'}(x)$ and $P_{\mu} \cap w_{\mu, \mu'}^{-1}(P_{\mu'})$ is a face in $P_{\mu}$. We only consider connected and simply-connected $(\mathfrak{a}, W)$-complexes acted upon by $\pi_{1}(S)$. Note that, since the gluing maps between simplices are Euclidean isometries, the Euclidean distance on $\mathfrak{a}$ induces a distance on $K$. We will only work with $(\mathfrak{a}, W)$-complexes whose induced distance is CAT(0). Similarly, $K$ is also endowed with an $\overline{\mathfrak{a}^{+}}$-valued distance inherited from $\mathfrak{a}$. 

Examples of $(\mathfrak{a}, W)$-complexes are subcomplexes of an Euclidean building modelled on $W_{aff}$. We will see that cores of product of two trees dual to measured laminations are indeed $(\mathfrak{a}, W)$-complexes where $\mathfrak{a}$ is the Cartan subalgebra of $\mathfrak{sl}(2,\R)\oplus \mathfrak{sl}(2,\R)$ and $W=\{\pm \mathrm{Id}\}$. 

\begin{deff}\label{def:dual-mixed} We say that an $(\mathfrak{a}, W)$-complex $K$ acted upon by $\pi_{1}(S)$ is dual to an $(\overline{\mathfrak{a}^{+}}, k)$-mixed structure $\mu$ on $S$ if we can decompose $K$ into a $1$-dimensional part $K_{1}$ and a $2$-dimensional part $K_{2}$ such that 
\begin{itemize}
    \item $K_{1}$ is the union of $\R$-trees dual to the laminar part of $\mu$;
    \item $K_{2}$ is endowed with a $1/k$-translation surface structure isomorphic to the universal cover of the flat parts of $\mu$.
\end{itemize}
\end{deff}

Note that the $2$-dimensional part of an $(\mathfrak{a}, W)$-complex can be endowed with a $1/k$ translation surface structure only if $W$ contains the subgroup generated by rotations of angle $\frac{2\pi}{k}$.

We believe that these mixed structures naturally appear in a harmonic map compactification of the Hitchin and maximal components of the character variety for real Lie groups $G$ of rank $2$. In this context, Labourie (\cite{Labourie_cyclic}), Collier (\cite{collier2016maximal}) and Collier-Tholozan-Toulisse (\cite{collier2019geometry}) proved that given a Hitchin or maximal representation $\rho:\pi_{1}(S) \rightarrow G$ there exists a unique $\rho$-equivariant minimal surface $\widetilde\Sigma_{\rho}$ in $G/K$, where $K$ is a maximal compact subgroup of $G$. One could then find a compactification of these components by studying the limiting behaviour of $\widetilde\Sigma_{\rho_n}$ when $\rho_{n}$ leaves all compact sets in the character variety. Up to subsequences, and after rescaling the metric on $G/K$ appropriately, $\widetilde\Sigma_{\rho_n}$ should converge to a subcomplex $\widetilde\Sigma_{\infty}\subset B$, where $B$ is a non-discrete Euclidean building modelled on the affine Weyl-group of $G$. We conjecture that $\Sigma_{\infty}$ is dual to a mixed structure as in Definition \ref{def:dual-mixed} where $\overline{\mathfrak{a}^{+}}$ is a Cartan subalgebra of the Lie algebra of $G$ and $k$ depends on the particular group. More precisely, we conjecture the following:

\newpage

\begin{conj}\label{conj} \hspace{-1cm} \begin{enumerate}[label=\alph*)] 
\item Let $G$ be a real split semisimple Lie group of rank $2$. Then the boundary of $\Hit(S,G)$ can be identified with the space of projective classes of $(\overline{\mathfrak{a}^{+}}, k)$-mixed structures where:
    \begin{itemize}
        \item if $G=\SL(3,\R)$, then $\mathfrak{a}=A_{2}$ and $k=3$;
        \item if $G=\Sp(4,\R)$, then $\mathfrak{a}=B_{2}$ and $k=4$;
        \item if $G=G_{2}^{\R}$, then $\mathfrak{a}=G_{2}$ and $k=6$.
    \end{itemize}
\item Let $G$ be a real semisimple Lie group of Hermitian type and rank $2$. Then the boundary of $\Max(S,G)$ can be identified with the space of projective classes of $(\overline{\mathfrak{a}^{+}}, k)$-mixed structures where:
    \begin{itemize}
        \item if $G=\SL(2,\R)\times \SL(2,\R)$, then $\mathfrak{a}=A_{1}\times A_{1}$ and $k=2$;
        \item if $G=\SO(2,n)$ with $n\geq 3$, then $\mathfrak{a}=B_{2}$ and $k=4$.
    \end{itemize}
\end{enumerate}
\end{conj}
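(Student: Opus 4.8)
\textbf{Proof proposal for Conjecture \ref{conj} (discussion of the strategy).}

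The plan is to establish the conjecture by running a harmonic map degeneration argument that parallels, for each rank-two group $G$, the proof already carried out for $\dPSL$ in Theorem \ref{thm:closedball} and in \cite{Charles_dPSL}. Fix a complex structure $J$ on $S$ and, for a Hitchin (resp.\ maximal) representation $\rho\colon\pi_1(S)\to G$, take the unique $\rho$-equivariant minimal surface $\widetilde\Sigma_\rho\subset G/K$ furnished by \cite{Labourie_cyclic, collier2016maximal, collier2019geometry}; its conformal structure together with the Higgs bundle data determines a tuple of holomorphic differentials on $(S,J)$, which by the Hitchin parametrization includes one differential of degree $k$ (the order listed in the statement) governing the leading-order behavior. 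First I would identify the \emph{dominant} direction of the Higgs field with a holomorphic $k$-differential $q_k$ and normalize so that $\|q_k\|\to\infty$ along a divergent sequence $\rho_n$. Rescaling the metric on $G/K$ by $\|q_k\|^{-1/2}$ (the appropriate power so that the induced metric stays bounded), one should show $\widetilde\Sigma_{\rho_n}$ converges in the equivariant Gromov--Hausdorff sense to a subcomplex $\widetilde\Sigma_\infty$ of a non-discrete Euclidean building $B$ modelled on $W_{\mathrm{aff}}(G)$, using the compactness of Euclidean-building-valued harmonic maps (cf.\ Parreau \cite{parreau_compact} for the existence of the limiting building, and the harmonic-map convergence results of \cite{wolf1995harmonic}, suitably generalized). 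The heart is then to show $\widetilde\Sigma_\infty$ is dual, in the sense of Definition \ref{def:dual-mixed}, to an $(\overline{\mathfrak{a}^+},k)$-mixed structure: on the region where the flat metric $|q_k|$ is nondegenerate the limit is (the universal cover of) a $1/k$-translation surface, and on the region where $q_k$ degenerates the limit collapses to a tree carrying an $\overline{\mathfrak{a}^+}$-valued transverse measure recorded by the subdominant differentials.

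The key steps, in order, are: (i) set up the Hitchin/maximal parametrization and single out the $k$-differential $q_k$ as the dominant term, recording how the remaining differentials assemble into an $\overline{\mathfrak{a}^+}$-valued datum; (ii) prove an a priori estimate showing the rescaled induced metrics on $\widetilde\Sigma_{\rho_n}$ are uniformly bounded and that, away from the zero locus of $q_k$, they converge to the flat metric $|q_k|^{1/k}$ (this is the WKB/decoupling estimate, analogous to \cite{OT, OT_Sp4} in the $\SL(3,\R)$ and $\Sp(4,\R)$ cases); (iii) identify the Gromov--Hausdorff limit of the rescaled surfaces with a subcomplex of the Euclidean building $B$, using that the ultralimit of the symmetric spaces $G/K$ (rescaled) is such a building (Parreau, Kleiner--Leeb); (iv) decompose $S$ along the curves where $q_k\to 0$ to leading order and show the limit restricted to the ``flat'' subsurfaces is a $1/k$-translation surface while on the complementary subsurfaces it is a tree dual to an $\overline{\mathfrak{a}^+}$-valued measured lamination, where the value in $\overline{\mathfrak{a}^+}$ is read off from the relative growth rates of the subdominant differentials; (v) conclude that $\widetilde\Sigma_\infty$ is dual to an $(\overline{\mathfrak{a}^+},k)$-mixed structure as in Definition \ref{def:dual-mixed}, and that conversely every such mixed structure arises (surjectivity), by running the harmonic map flow along rays of differentials as in the proof of Theorem \ref{thm:closedball}; and (vi) verify that this assignment is a homeomorphism onto the boundary, matching dimensions. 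Remark \ref{rmk:examples} and Lemma \ref{lm:core_as_mixed} already handle the $A_1\times A_1$ case as a model.

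I expect step (ii)--(iv) to be the main obstacle. Unlike the $\dPSL$ case, where the product structure lets one split the problem into two independent $\PSL(2,\R)$ harmonic-map degenerations each controlled by a genuine quadratic differential (so that the classical Wolf theory \cite{Wolf_harmonic, wolf1995harmonic} and Skora's theorem \cite{skora1996splittings} apply directly), for $\SL(3,\R)$, $\Sp(4,\R)$, $G_2$ and $\SO(2,n)$ the Hitchin equations genuinely couple the differentials, and one must show that the $k$-differential dominates and that the subdominant pieces contribute only a (vector-valued) transverse measure on the collapsing locus rather than additional two-dimensional flat geometry. Establishing the sharp decoupling estimate uniformly up to the degeneration locus — in particular controlling the behavior near the zeros of $q_k$ and near the curves along which $q_k$ vanishes in the limit — is precisely the analytic difficulty, and is the reason the conjecture is not a formal consequence of the $A_1\times A_1$ result. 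A secondary obstacle is functoriality of the duality under the building structure: one must check that the $(\mathfrak{a},W)$-complex structure on $\widetilde\Sigma_\infty$ inherited from $B$ is compatible with the $1/k$-translation surface structure on its two-dimensional part, which (as noted after Definition \ref{def:dual-mixed}) forces $W$ to contain the rotation by $2\pi/k$ — this holds for the Weyl groups of $A_2, B_2, G_2$ with the stated $k$, but must be verified case by case.
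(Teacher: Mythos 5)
The statement you are addressing is stated in the paper as a \emph{conjecture}, and the paper offers no proof of it; what you have written is a strategy outline rather than a proof, and the gap is not incidental but is exactly the content of the conjecture. Your steps (ii)--(iv) --- the sharp decoupling/WKB estimate up to the degeneration locus, the identification of the rescaled Gromov--Hausdorff limit of the equivariant minimal surfaces with a subcomplex of the Euclidean building, and the splitting of that limit into a $1/k$-translation part and a tree dual to an $\overline{\mathfrak{a}^{+}}$-valued lamination --- are precisely what is unknown, and you acknowledge this yourself. In addition, your steps (v)--(vi) silently assume higher-order analogues of Hubbard--Masur and Gardiner--Masur (that an arbitrary $(\overline{\mathfrak{a}^{+}},k)$-mixed structure is realized by a ray of Hitchin/maximal representations, and that the resulting boundary map is injective and continuous both ways); no such realization or uniqueness results are available for $k$-differentials with $k\geq 3$ or for $\SO(2,n)$, so even granting (ii)--(iv) the ``identification'' asserted in the conjecture would not follow. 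The existing literature cited in the paper (\cite{OT}, \cite{OT_Sp4}) controls limits of induced metrics in the $\SL(3,\R)$ and $\Sp(4,\R)$ cases but does not identify boundary points with vector-valued mixed structures, and nothing at all is in place for $G_{2}^{\R}$ or $\SO(2,n)$, $n\geq 3$.

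It is also worth noting that the one case the paper actually proves --- $G=\SL(2,\R)\times\SL(2,\R)$ --- is not handled by the harmonic-map-to-building scheme you describe. It is obtained from the explicit Guirardel core of a product of two dual trees: Lemma \ref{lm:decomposition_pair_laminations} decomposes the surface, Lemma \ref{lm:core_as_mixed} exhibits the core as an $(A_1\times A_1,\{\pm\mathrm{Id}\})$-complex dual to an $(\overline{A_{1}^{+}\times A_{1}^{+}},2)$-mixed structure, and the subsequent theorem identifies the space of such structures with $\Core(\mathcal T,\mathcal T)$, with the boundary identification coming from Theorem \ref{thm:closedball} and \cite{Charles_dPSL} rather than from a building-valued harmonic map limit. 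So your proposal reproduces the heuristic the authors themselves sketch immediately before the conjecture, but it does not supply the missing analysis, and it should not be presented as a proof of the statement.
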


In support of this conjecture, we show that the core of the product of two trees dual to measured laminations is dual to an $(\overline{A_{1}^{+}\times A_{1}^{+}}, 2)$-mixed structure and that we can identify $\Core(\mathcal{T}, \mathcal{T})$ with the space of such structures, thus proving the conjecture for $G=\SL(2,\R)\times \SL(2,\R)$. Moreover, in a subsequent paper (\cite{LTW_SL3}), the third author, in collaboration with Loftin and Wolf, will give further evidence towards Conjecture \ref{conj} by describing the geometry of the harmonic maps to buildings arising from some diverging sequences of $\SL(3,\R)$-Hitchin representations. It would be interesting to introduce a higher rank version of our vector valued mixed structures, at least for the case of $\SL(d,\R)$-Hitchin components, and relate it to the subspaces of the Euclidean building studied in \cite{Le2016} and \cite{martone2018}.

\begin{lemma}\label{lm:core_as_mixed} Let $T_{1}$ and $T_{2}$ be real trees dual to measured laminations $\lambda_{1}$ and $\lambda_{2}$ and let $C$ be the core of $T_{1}\times T_{2}$. Then $C$ is an $(A_1\times A_1,\{\pm \text{Id}\})$-complex dual to an $(\overline{A_{1}^{+}\times A_{1}^{+}}, 2)$-mixed structure on $S$.
\end{lemma}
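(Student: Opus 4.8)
The plan is to unwind the definitions and use the explicit description of the core furnished by the map $F$ in equation \eqref{eq:F} together with Lemma \ref{lm:decomposition_pair_laminations} and Lemma \ref{lm:1-dim core}. First I would verify that $C=F(\widetilde S)$ is an $(A_1\times A_1,\{\pm\text{Id}\})$-complex in the sense recalled above. Here $\mathfrak a=A_1\times A_1$ is the Cartan subalgebra of $\mathfrak{sl}(2,\R)\oplus\mathfrak{sl}(2,\R)$, which we identify with $\R^2$, the positive Weyl chamber $\overline{\mathfrak a^+}$ with the first quadrant, and $W=\{\pm\text{Id}\}$; note $W_{aff}=W\ltimes\R^2$ acts on $\R^2$ by the isometries $x\mapsto \pm x+v$. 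The core $C$ is a subcomplex of $T_1\times T_2$, which is locally modelled on products of segments, hence on subsets of $\R^2$; the $2$-dimensional cells are (images of) products of arcs in $T_1$ and $T_2$, isometrically identified with rectangles in $\R^2$, and along a common edge two such rectangles are glued by a map that is the identity on one coordinate and an orientation-reversing isometry on the other, i.e. an element of $W_{aff}$. The $1$-dimensional cells are edges of the $\R$-trees $F(\Omega)$ over the type $i)$ regions, which also embed in $\R^2$ (along a coordinate axis direction, after a $W_{aff}$-move, reflecting the $d_0$-metric $d_1+d_2$). That the induced path metric $d_{\mathcal C}$ is CAT(0) is exactly Guirardel's \cite[Proposition 4.9]{Guirardel}, already quoted. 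This establishes the $(\mathfrak a,W)$-complex structure; the $\pi_1(S)$-action is the one inherited from the diagonal action on $T_1\times T_2$, which preserves $C$.

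Next I would construct the $(\overline{A_1^+\times A_1^+},2)$-mixed structure $\mu$ on $S$ dual to $C$, using Definition \ref{def:dual-mixed}. The curve system $\gamma_1,\dots,\gamma_n$ is the one produced by Lemma \ref{lm:decomposition_pair_laminations}. On a type $ii)$ component $S'$, the pair $(\lambda_1,\lambda_2)$ fills, so by the Gardiner--Masur theorem recalled in the background it determines a unique half-translation surface structure on $S'$, i.e. a meromorphic quadratic differential ($k=2$) of finite area; and by the discussion after \eqref{eq:F}, the corresponding $2$-dimensional piece of $C$ is precisely the universal cover of this half-translation surface, so it matches the $K_2$ requirement. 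On a type $i)$ component $S'$, $\lambda_0=\lambda_1+\lambda_2$ is a measured lamination, which we promote to the $\overline{A_1^+\times A_1^+}$-valued measured lamination whose transverse measure assigns to an arc $\gamma$ the vector $(\int_\gamma d\widetilde\lambda_1,\int_\gamma d\widetilde\lambda_2)\in\overline{\mathfrak a^+}$ — this lies in the closed first quadrant precisely because on a type $i)$ piece $\lambda_1$ and $\lambda_2$ are nowhere transverse, matching the characterization in Remark \ref{rmk:examples}; properties a), b), c) of Definition \ref{def:Weyl_laminations} are immediate from the corresponding properties of the two scalar transverse measures. The dual $\R$-tree to this $\overline{\mathfrak a^+}$-valued lamination, in the sense of Definition \ref{def:dual_vector_trees}, is exactly $(F(\Omega),d_0)$: the $\overline{\mathfrak a^+}$-valued distance is $d_{\mathfrak a^+}((x_1,x_2),(y_1,y_2))=(d_1(x_1,y_1),d_2(x_2,y_2))$, condition a) holds by construction, and condition b) holds because $d\ge\|d_{\mathfrak a^+}\|$ is just $\sqrt{d_1^2+d_2^2}\ge\sqrt{d_1^2+d_2^2}$ along edges — and by Lemma \ref{lm:1-dim core} this tree is the one dual to $\lambda_0|_{S'}$, so $K_1$ is the union of $\R$-trees dual to the laminar part of $\mu$, as required. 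Finally, one checks these pieces glue up correctly along the $\widetilde\gamma_j$: by the computation following \eqref{eq:F}, $F$ collapses each $\widetilde\gamma_j$ to a point, so there is no genuine flat annulus contribution and the gluing in Definition \ref{def:mixed_structures} is along the (possibly degenerate) annuli neighbourhoods of the $\gamma_j$, with each $\gamma_j$ allowed to lie in the support of the laminar part — consistent with clause (2) of that definition.

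The last point to address is the compatibility of the two metrics on the $1$-dimensional part: Definition \ref{def:dual-mixed} refers to $\R$-trees dual to the laminar part, and I argued above this is $(F(\Omega),d_0)$, whereas the ambient core carries the restriction of the path metric $d_{\mathcal C}$ coming from the Euclidean distance $d$ on $T_1\times T_2$. The resolution, already noted in the text before the definition of $\Core(\mathcal T,\mathcal T)$, is that $F(\Omega)$ contains no topological circle, so its $d_{\mathcal C}$-restriction is again an $\R$-tree; one must observe that on the $1$-dimensional pieces, where locally the core is a segment in a single coordinate direction of $\R^2$, the two distances $d_0=d_1+d_2$ and $d=\sqrt{d_1^2+d_2^2}$ agree up to the bi-Lipschitz constant that disappears once one records the honest $\overline{\mathfrak a^+}$-valued distance $d_{\mathfrak a^+}$ — which is what Definition \ref{def:dual_vector_trees} b) is designed to accommodate via the inequality $d\ge\|d_{\mathfrak a^+}\|$. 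So the duality statement is about the $\overline{\mathfrak a^+}$-valued distance and the underlying $\R$-tree, both of which are intrinsic, and the choice of scalar path-metric on the ambient complex is irrelevant. I expect the main obstacle to be precisely this bookkeeping at the interface between the $1$- and $2$-dimensional strata: making sure that the $W_{aff}$-gluings, the collapsing of the $\widetilde\gamma_j$, and the two competing metrics on the $\R$-tree pieces are all tracked consistently, rather than any single deep new idea — the heavy lifting (non-emptiness, CAT(0), the $F$-description, Lemma \ref{lm:1-dim core}) having been done already.
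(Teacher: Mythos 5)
Your proposal is correct and follows essentially the same route as the paper: the decomposition of $C$ into $1$- and $2$-dimensional pieces via the map $F$ and Lemma \ref{lm:decomposition_pair_laminations}, the identification of the $2$-dimensional pieces with universal covers of the half-translation surfaces on the filling subsurfaces, and the use of Lemma \ref{lm:1-dim core} together with Remark \ref{rmk:examples} and the vector-valued distance $\vec d=(d_1,d_2)$ to check conditions a) and b) of Definition \ref{def:dual_vector_trees} on the laminar pieces. The only cosmetic differences are that you verify b) by the elementary inequality $d_1+d_2\geq\sqrt{d_1^2+d_2^2}$ (and note the robustness under the choice of scalar metric), where the paper argues via monotonicity of the projections of geodesic paths, and that you spell out the $(\mathfrak a,W)$-complex gluing data which the paper leaves implicit from Section \ref{sec:cores}.
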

\begin{proof} We already saw in Section \ref{sec:cores} that $C$ is the union of a $1$-dimensional subcomplex $C_{1}$ and a $2$-dimensional subcomplex $C_{2}$ of $T_{1}\times T_{2}$. Moreover, we showed that each connected component of $C_{2}$ is the universal cover of a half-translation surface structure on a subsurface $S'$ of $S$, on which the laminations $\lambda_{1}$ and $\lambda_{2}$ fill. Thus, it only remains to show that each connected component $C_{1}'$ of $C_{1}$ is a tree dual to an $\overline{A_{1}^{+}\times A_{1}^{+}}$-valued measured lamination. \\
Recall from Section \ref{sec:cores} that $C_{1}'$ is the image under the map $F$ defined in Equation \eqref{eq:F} of a domain $\Omega \subset \h^{2}$ that can be identified with the universal cover of a subsurface $S'$ of $S$ on which $\lambda_{1}$ and $\lambda_{2}$ are nowhere transverse. Moreover, we observe that $C_{1}'$ has a natural distance $d$ induced by the ambient space
\[
    d((x_{0}, y_{0}), (x_{1},y_{1}))=\sqrt{d_{1}(x_{0}, y_{0})^{2}+ d_{2}(x_{1},y_{1})^{2}}
\]
and a natural $\overline{A_{1}^{+}\times A_{1}^{+}}$-valued distance $\vec{d}$ defined by 
\[
    \vec{d}((x_{0}, y_{0}), (x_{1},y_{1}))=(d_{1}(x_{0}, y_{0}), d_{2}(x_{1},y_{1})) \ .
\]
We claim that $(C_{1}', d)$ is an $\R$-tree dual to the $\overline{A_{1}^{+}\times A_{1}^{+}}$-valued measured lamination $\vec{\lambda}=(\lambda_{1}, \lambda_{2})$ (see Remark \ref{rmk:examples}).
By Lemma \ref{lm:1-dim core}, $C_{1}'$ can be identified with the $\R$-tree dual to the measured lamination $\lambda_{0}=\lambda_{1}+\lambda_{2}$, if endowed with the distance $d_{0}$ introduced in Section \ref{sec:cores}. In particular, there is a continuous $\pi_{1}(S')$-equivariant map $p:=p_{\lambda_{0}}: \Omega \rightarrow C_{1}'$. It follows immediately from the definitions and the fact that $T_{1}$ and $T_{2}$ are dual to the laminations $\lambda_{1}$ and $ \lambda_{2}$ that for all $x,y \in \Omega'$ we have
\[
    \vec{d}(p(x), p(y))=\vec{\lambda}(\gamma)
\]
for all $\gamma:[0,1]\rightarrow \Omega$ transverse to the support of $\lambda_{0}$ with $\gamma(0)=x$ and $\gamma(1)=y$. \\
Property b) in Definition \ref{def:dual_vector_trees} also holds. Indeed, a geodesic path $\gamma=(\gamma_{1},\gamma_{2}):[0,1] \rightarrow C_{1}'\subset T_{1}\times T_{2}$, seen in the quadrant $\gamma_{1}\times \gamma_{2}$, consists of a concatenation of horizontal, vertical or diagonal paths in which the projections onto the two factors are always non-decreasing. Hence, 
\[
    d(\gamma(0), \gamma(1))\geq \|\vec{d}(\gamma(0), \gamma(1))\| \ ,
\]
and the proof is complete.
\end{proof}

\begin{thm} The space of $(\overline{A_{1}^{+}\times A_{1}^{+}}, 2)$-mixed structures on $S$ is homeomorphic to $\Core(\mathcal{T}, \mathcal{T})$. 
\end{thm}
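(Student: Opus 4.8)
The strategy is to build a bijection between the space of $(\overline{A_1^+\times A_1^+},2)$-mixed structures on $S$ and $\Core(\mathcal T,\mathcal T)$, and then upgrade it to a homeomorphism for the appropriate topologies. The construction of the two maps is essentially done: Lemma \ref{lm:core_as_mixed} associates to each core $C=\mathcal C(T_1,T_2)$, with $T_i$ dual to $\lambda_i$, a mixed structure whose laminar parts are the $(\lambda_1,\lambda_2)$-restrictions to the type $i)$ subsurfaces and whose flat parts are the half-translation surfaces on the type $ii)$ subsurfaces. Conversely, given an $(\overline{A_1^+\times A_1^+},2)$-mixed structure, I would reconstruct a pair of measured laminations $(\lambda_1,\lambda_2)$ on $S$: on a laminar piece carrying an $\overline{A_1^+\times A_1^+}$-valued lamination, Remark \ref{rmk:examples} says this is exactly an ordered pair $(\lambda_1',\lambda_2')$ of nowhere-transverse measured laminations; on a flat piece, a meromorphic $2$-differential of finite area (equivalently, via Gardiner--Masur, a pair of filling measured foliations) gives the vertical/horizontal foliations, hence a pair $(\mathcal F_1,\mathcal F_2)$, hence $(\lambda_1',\lambda_2')$ filling that subsurface; the gluing annuli $\gamma_j$ contribute nothing transverse. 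Assembling these across all pieces yields $\lambda_i$ on $S$, and then one forms $\mathcal C(T_{\lambda_1},T_{\lambda_2})$. I would check that these two assignments are mutually inverse: going core $\to$ mixed structure $\to$ laminations recovers $(\lambda_1,\lambda_2)$ by construction, and the reverse composition recovers the mixed structure because the decomposition of $S$ furnished by Lemma \ref{lm:decomposition_pair_laminations} is \emph{canonical} (it is the maximal curve system with $i(\lambda_1,\cdot)+i(\lambda_2,\cdot)=0$), so the laminar/flat subdivision and the induced geometry on each piece are uniquely determined.

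**Key steps, in order.** First, I would make precise what ``the space of $(\overline{A_1^+\times A_1^+},2)$-mixed structures'' means as a topological space: a mixed structure determines a length function on $\pi_1(S)$ (or, if one prefers, on curves), namely $\ell_\mu(\gamma)$ built from the $\overline{A_1^+\times A_1^+}$-valued lengths on laminar pieces, the flat lengths on flat pieces, and the core-distance across annuli; topologize the space of mixed structures via this length spectrum, exactly as $\Core(\mathcal T,\mathcal T)$ is topologized by the path metric $d_{\mathcal C}$ (which, as noted after Lemma \ref{lm:1-dim core}, is the length-spectrum data via the marked length spectrum of the core). Second, establish the set-theoretic bijection as above. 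Third, prove continuity in one direction: if $(\lambda_{1,n},\lambda_{2,n})\to(\lambda_1,\lambda_2)$ in $\mathcal{ML}(S)\times\mathcal{ML}(S)$, then the cores converge (this is Proposition \ref{prop:core=productlaminations} together with the fact that the core depends only on the two factors), and the associated mixed-structure length functions converge because the decomposition curves and the restricted laminations/foliations vary continuously away from the degenerate locus, and the intersection form is continuous. Fourth, invoke Proposition \ref{prop:core=productlaminations} to identify $\Core(\mathcal T,\mathcal T)\cong\mathcal{ML}(S)\times\mathcal{ML}(S)$, a space homeomorphic to $\R^{12g-12}$; since a continuous bijection from $\R^{N}$ to a space that is itself a ``nice'' (locally compact, finite-dimensional, ...) space, one can conclude it is a homeomorphism either by producing an explicit continuous inverse through the reconstruction map, or by an invariance-of-domain argument once one knows the target is a $(12g-12)$-manifold.

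**Main obstacle.** The genuinely delicate point is the behavior of the decomposition near the ``boundary walls'' in $\mathcal{ML}(S)\times\mathcal{ML}(S)$ where the combinatorial type of the Lemma \ref{lm:decomposition_pair_laminations} subdivision jumps: a subsurface can change from type $ii)$ (filling, flat) to type $i)$ (laminar) in a limit, and a flat piece can degenerate so that its $2$-differential develops long thin annuli that merge into the gluing curves. One must show that along such a degeneration the mixed-structure length functions still converge to the length function of the limiting mixed structure — i.e. that the flat length of curves in a collapsing flat piece converges to the laminar contribution, which is precisely the Duchin--Leininger--Rafi type statement that quadratic-differential metrics degenerate to (quadratic) mixed structures, combined here with the $\overline{A_1^+\times A_1^+}$-refinement coming from the product structure. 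Handling this uniformly, and checking that the reconstruction map (mixed structure $\to$ pair of laminations) is continuous across these walls, is the heart of the argument; everything else is bookkeeping built on the lemmas already established in Sections \ref{sec:cores} and the background on Gardiner--Masur and Hubbard--Masur. An alternative that sidesteps the sharpest estimates is to prove continuity only of the map $\Core(\mathcal T,\mathcal T)\to\{\text{mixed structures}\}$ and of its inverse by exhibiting both through the common parametrization by $\mathcal{ML}(S)\times\mathcal{ML}(S)$, reducing the theorem to the tautology that two continuous parametrizations of the same set by the same model space, agreeing on the nose, give a homeomorphism; I would pursue this cleaner route and relegate the analytic degeneration statement to a remark or a citation of \cite{DLR_flat} and \cite{Charles_dPSL}.
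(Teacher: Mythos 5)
Your construction of the bijection is the same as the paper's: on laminar pieces you use Remark \ref{rmk:examples} to read off an ordered pair of nowhere-transverse measured laminations, on flat pieces you use Gardiner--Masur to pass between a finite-area meromorphic quadratic differential and its pair of filling vertical/horizontal foliations, you assemble these into a pair $(\lambda_1,\lambda_2)\in\mathcal{ML}(S)\times\mathcal{ML}(S)$, and you use the canonicity of the decomposition from Lemma \ref{lm:decomposition_pair_laminations} together with Lemma \ref{lm:core_as_mixed} to see the two assignments are mutually inverse. That is exactly the map $\varphi$ in the paper, and combined with Proposition \ref{prop:core=productlaminations} it carries the whole content of the theorem.

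Where you diverge is the topology, and this is where your main route has a gap you do not close. Definition \ref{def:mixed_structures} gives the set of $(\overline{A_1^+\times A_1^+},2)$-mixed structures no a priori topology; the paper simply declares the topology on this set to be the one making $\varphi$ a homeomorphism, so no continuity estimate is needed and the theorem follows from the bijection alone. You instead propose an intrinsic length-spectrum topology and then must prove that $\varphi$ is a homeomorphism for it; the ``wall-crossing'' continuity you correctly identify as the heart of that argument (flat pieces degenerating into laminar ones, annuli merging into decomposition curves) is precisely a Duchin--Leininger--Rafi/\cite{Charles_dPSL}-type convergence statement, and you only flag it rather than prove it --- so as written your primary plan establishes a strictly stronger theorem only modulo an unproved analytic step. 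Your fallback ``two continuous parametrizations of the same set give a homeomorphism'' is not a tautology unless the topology on the mixed-structure side is \emph{defined} by the parametrization (a continuous bijection from $\mathcal{ML}(S)\times\mathcal{ML}(S)$ onto a space with an independently defined topology need not be open, and invariance of domain would require knowing that target is a manifold in its own topology); once you make that move explicit, your fallback collapses to the paper's argument, and the degeneration analysis becomes unnecessary rather than something to ``relegate to a remark.''
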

\begin{proof} Let $Y$ denote the set of $(\overline{A_{1}^{+}\times A_{1}^{+}}, 2)$-mixed structures on $S$. We still need to define a topology on $Y$. We will construct a bijection
\[
    \varphi: Y \rightarrow \mathcal{ML}(S)\times \mathcal{ML}(S)
\]
with the property that for all $y \in Y$ the core of the product of trees corresponding to $\varphi(y)$ is dual to the $(\overline{A_{1}^{+}\times A_{1}^{+}}, 2)$-mixed structure $y$. We then give $Y$ the topology that makes $\varphi$ a homeomorphism, thus proving the result. \\
Given $y \in Y$, let $\gamma_{1}, \dots, \gamma_{n}$ be the simple closed curves subdividing $S$ into its laminar and flat parts, as in Definition \ref{def:mixed_structures}. Let $S_{i}$ for $i=1, \dots, m$ denote the connected components of $S\setminus \cup_{j} \gamma_{j}$. If $S_{i}$ is endowed with a half-translation surface structure induced by a meromorphic quadratic differential $q_{i}$ of finite area, then the horizontal and vertical foliations of $q_{i}$ determine a pair of measured laminations $(\lambda^{i}_{1}, \lambda^{i}_{2})$. Here we are implicitly using the well-known homeomorphism between the space of measured foliations arising this way and the space of measured laminations, see for instance (\cite{Levitt_foliations=laminations}, \cite{LM_ergodic_measured_laminations}). On the other hand, by Remark \ref{rmk:examples}, if $S_{i}$ carries an $\overline{\mathfrak{a}^{+}}$-valued measured lamination, then this is equivalent to a pair of measured laminations $(\lambda_{1}^{i}, \lambda_{2}^{i})$ possibly containing some boundary curves $\gamma_{j}$ in their support. We can then associate to $y \in Y$ the pair of measured laminations $(\lambda_{1}, \lambda_{2}) \in \mathcal{ML}(S) \times \mathcal{ML}(S)$ defined as $\lambda_j=\sum_{i}^{m} \lambda_j^{i}$ for $j=1,2$. Since the horizontal and vertical measured foliations uniquely determine a meromorphic quadratic differential of finite area (\cite{GM_extremal_length}), using Remark \ref{rmk:examples} and Lemma \ref{lm:decomposition_pair_laminations}, it is clear that $\varphi$ is a bijection. \\
Moreover, comparing the definition of the map $\varphi$ with Lemma \ref{lm:core_as_mixed}, it is easy to verify that the core of the product of trees dual to the pair $\varphi(y)$ is dual to the $(\overline{A_{1}^{+}\times A_{1}^{+}}, 2)$-mixed structure $y$ we started with. 
\end{proof}

\bibliographystyle{alpha}
\bibliography{bs-bibliography}

\bigskip
\noindent \footnotesize \textsc{Department of Mathematics, University of Michigan, Ann Arbor}\\
\emph{E-mail address:}  \verb|martone@umich.edu|

\bigskip
\noindent \footnotesize \textsc{Department of Mathematics and Statistics, University of Massachusetts, Amherst}\\
\emph{E-mail address:}  \verb|ouyang@math.umass.edu|

\bigskip
\noindent \footnotesize \textsc{Department of Mathematics, Rice University}\\
\emph{E-mail address:} \verb|andrea_tamburelli@libero.it|

\end{document}